
\documentclass[12pt]{article}
\usepackage{geometry}



\usepackage{amssymb}
\usepackage{amsmath}
\usepackage{amsthm}
\usepackage{thmtools}
\usepackage{mathtools}
\usepackage[scr=rsfs]{mathalpha}
\newtheorem{lemma}{Lemma}[section]
\newtheorem{corollary}[lemma]{Corollary}

\newtheorem{theorem}[lemma]{Theorem}
\theoremstyle{remark}
\newtheorem{remark}[lemma]{Remark}
\newtheorem{example}[lemma]{Example}
\theoremstyle{definition}
\newtheorem{definition}[lemma]{Definition}

\usepackage[utf8]{inputenc}
\usepackage{amsfonts}
\usepackage{array}
\usepackage{hyperref,cleveref}
\usepackage[english]{babel}
\usepackage[normalem]{ulem}
\usepackage{float}
\newcommand{\enstq}[2]{\left\{#1\mathrel{}\middle|\mathrel{}#2\right\}}
\newcommand{\norm}[1]{\left\|#1\right\|}
\newcommand{\N}{\mathbb{N}}

\newcommand{\R}{\mathbb{R}}
\newcommand{\nc}[1]{#1} 

\newcommand{\abs}[1]{\left\lvert #1 \right\rvert}

\newcommand{\isdef}{\mathrel{\mathop:}=}

\newcommand\Includegraphics[2][]{%
	\raisebox{-.5\dimexpr\height-\ht\strutbox+\dp\strutbox}{\includegraphics[#1]{#2}}}
\usepackage{todonotes}
\usepackage{subcaption}

\makeatletter
\newsavebox{\@brx}
\newcommand{\llangle}[1][]{\savebox{\@brx}{\(\m@th{#1\langle}\)}%
	\mathopen{\copy\@brx\kern-0.5\wd\@brx\usebox{\@brx}}}
\newcommand{\rrangle}[1][]{\savebox{\@brx}{\(\m@th{#1\rangle}\)}%
	\mathclose{\copy\@brx\kern-0.5\wd\@brx\usebox{\@brx}}}
\makeatother

\usepackage{todonotes}
\usepackage{csquotes}
\makeatletter
\renewcommand*{\vec}[1]{\boldsymbol{#1}}
\makeatother
\usepackage{pseudocode}
\usepackage{algorithm}
\usepackage{mdframed}

\renewcommand{\#}[1]{\textup{Card}\left(#1\right)}
\makeatletter
\newenvironment{sampleCode}[2][plain]
{%
	\refstepcounter{pseudocode}%
	\ifthenelse{\equal{#1}{plain}}{\setboolean{pcode@plain}{true}}{\setboolean{pcode@plain}{false}}%
	\ifthenelse{\equal{#1}{ruled}}{\setboolean{pcode@ruled}{true}}{\setboolean{pcode@ruled}{false}}%
	\ifthenelse{\equal{#1}{display}}{\setboolean{pcode@disp}{true}}{\setboolean{pcode@disp}{false}}%
	\ifthenelse{\equal{#1}{shadowbox}}{\setboolean{pcode@shad}{true}}{\setboolean{pcode@shad}{false}}%
	\ifthenelse{\equal{#1}{doublebox}}{\setboolean{pcode@dbox}{true}}{\setboolean{pcode@dbox}{false}}%
	\ifthenelse{\equal{#1}{ovalbox}}{\setboolean{pcode@obox}{true}}{\setboolean{pcode@obox}{false}}%
	\ifthenelse{\equal{#1}{Ovalbox}}{\setboolean{pcode@Obox}{true}}{\setboolean{pcode@Obox}{false}}%
	\ifthenelse{\equal{#1}{framebox}}{\setboolean{pcode@fbox}{true}}{\setboolean{pcode@fbox}{false}}%
	\setcounter{pseudonum}{0}%
	\ifthenelse{\boolean{pcode@disp}}%
	{%
		\noindent\begin{math}%
		}%
		{%
			\begin{Sbox}%
				\begin{minipage}{\pcode@width}%
					\ifthenelse{\boolean{pcode@ruled}}
					{
						\noindent\rule{\pcode@width}{1pt}\hfill\\
						\pcode@AF{#2}
						\noindent\rule{\pcode@width}{1pt}\hfill\\[1ex]
					}
					{
						\pcode@AF{#2}
					}
					\noindent\begin{math}\begin{array}{@{\pcode@tab{1}}lr@{}}%
						}{}%
					}%
					{%
						\ifthenelse{\boolean{pcode@disp}}%
						{%
						\end{math}
					}%
					{%
						\ifthenelse{\boolean{pcode@ruled}}
						{
					\end{array}\end{math}
					\noindent\rule{\pcode@width}{1pt}\hfill
			\end{minipage}\end{Sbox}\noindent%
		}
		{\end{array}\end{math}\end{minipage}\end{Sbox}\noindent}%
\ifthenelse{\boolean{pcode@plain}}{\TheSbox}{}%
\ifthenelse{\boolean{pcode@ruled}}{\TheSbox}{}%
\ifthenelse{\boolean{pcode@shad}}{\shadowbox{\TheSbox}}{}%
\ifthenelse{\boolean{pcode@dbox}}{\doublebox{\TheSbox}}{}%
\ifthenelse{\boolean{pcode@obox}}{\cornersize*{4ex}\ovalbox{\TheSbox}}{}%
\ifthenelse{\boolean{pcode@Obox}}{\cornersize*{4ex}\Ovalbox{\TheSbox}}{}%
\ifthenelse{\boolean{pcode@fbox}}{\fbox{\TheSbox}}{}%
\bigskip%
}%
}%

\newcommand{\verteq}{\rotatebox{90}{$\,=$}}
\newcommand{\CLASS}[1]{\mbox{{\bfseries classdef} {\tt #1}}}
\newcommand{\METHODS}{\mbox{\bfseries methods}\\\begin{array}{@{\quad}lr@{}}}
\newcommand{\ENDMETHODS}{\end{array}\\\mbox{\bfseries end}}
\makeatother
\title{Fractured meshes}
\usepackage{geometry}

\usepackage{wrapfig}

\usepackage{pdfpages}


\date{}
\title{Fractured Meshes}


\author{
	M.~Averseng\thanks{Department of Mathematical Sciences, University of Bath, \tt ma3092@bath.ac.uk},
	\,\,
	X.~Claeys\thanks{Laboratoire Jacques-Louis Lions, Sorbonne Université, \tt xavier.claeys@upmc.fr}
	\,\,, R.~Hiptmair\thanks{Seminar for Applied Mathematics, ETH Zurich, \tt hiptmair@sam.math.ethz.ch}
}
\begin{document}
\maketitle



\begin{abstract}
This work introduces ``generalized meshes", a type of meshes suited for the discretization of partial differential equations in non-regular geometries. Generalized meshes extend regular simplicial meshes by allowing for overlapping elements and more flexible adjacency relations. They can have several distinct ``generalized" vertices (or edges, faces) that occupy the same geometric position. These generalized facets are the natural degrees of freedom for classical conforming spaces of discrete differential forms appearing in finite and boundary element applications. Special attention is devoted to the representation of fractured domains and their boundaries. An algorithm is proposed to construct the so-called {\em virtually inflated mesh}, which correspond to a ``two-sided" mesh of a fracture. Discrete $d$-differential forms on the virtually inflated mesh are characterized as the trace space of discrete $d$-differential forms in the surrounding volume.
\end{abstract}



\section*{Introduction} 

In an $n$-dimensional domain $\Omega \subset \R^n$, a fracture $\Gamma$ may be modeled by an embedded $(n-1)$-dimensional structure. For the finite element numerical simulation of partial differential equations (PDEs) in such domains, there are two approaches:
\begin{itemize}
\item[(i)] In the first one, the physical equations are approximated in the domain $\Omega$ surrounding the fracture, with boundary conditions on $\partial \Omega$ and $\Gamma$. A key example is the so-called \emph{mixed-dimensional modeling} of flow in fractured porous media. References on this topic include \cite[Section 5]{bookFracPorMed}, \cite[Section 3.5]{flowFracPor}, \cite{ahmed2017reduced,modelingFracASInterf,benchmark,celes2005compact,porepy,modelingFracAndBarr} but the list is far from being exhaustive.
\item[(ii)] In the second one, equivalent \emph{boundary integral equations} are formulated directly and only on the surface of $\Gamma$. This is the approach followed for example in wave scattering problems, where the obstacle $\Gamma$ is a \emph{complex screen} embedded in the space $\R^3$. A lot of attention has been devoted to the analysis and numerical resolution of this type of problem in recent years. References include \cite{bannister2022acoustic,buffaChris,chandler2021bem,quotientBem,multiscreens,cools2022preconditioners,precEFIE,yla2005surface}. 
\end{itemize}

We introduce a class of mathematical objects able to represent in a unified way the singular geometries appearing in both approaches. Those objects, that we call {\em generalized meshes}, are a simple generalization of {\em regular} meshes (meshes that represent regular manifolds). An $n$-dimensional generalized mesh is defined by a set of $n$-simplices, and an adjacency graph between them. The simplices are allowed to overlap arbitrarily, and their mutual adjacency is not dictated by shared vertices. 

Compared to other classical non-manifold geometric representations \cite{weiler1986topological,choi1989vertex,lee2001partial,rossignac1989sgc}, the generalized meshes defined here, while allowing to cover many practical cases, do not deal with the most general class of mixed-dimensional non-manifold geometries. For instance, they do not allow ``dangling edges". In counterpart, because of their simplicity, their storage is cheap and most operations -- e.g., assembly of a finite element matrix -- are performed in linear complexity. 


Generalized meshes are adapted not only to describe singular geometries, but also to represent the classical discrete subspaces of the infinite dimensional spaces on which the PDEs act (with, for instance, conforming Galerkin discretization in mind). For those discrete subspaces, the degrees of freedom are directly related to specific geometric features of the mesh that we call {\em generalized subfacets}. The relation between generalized facets and the corresponding discrete subspaces is similar to how, in a regular mesh, continuous piecewise linear functions are related to vertices, Nédélec elements to edges, and so on. Importantly, a generalized mesh can have several distinct generalized vertices (or edges, or faces) located at the same geometric position. This allows, for instance, to discretize the two ``sides" of a surface independently. 
%
%

This work is purely concerned with the combinatorial properties of generalized meshes. Our focus is on mathematical formalization rather than computational performance. This point of view seems valuable to us, as it paves the way for the rigorous discussion of the discretization of PDEs in non-regular geometries. In fact, one of the main motivation for this work originated in the need for a formal description of a boundary element method for scattering by a multi-screen \cite{averseng2022ddm,quotientBem,multiscreens}. 

\subsection*{Main results}
Our contributions are the following:\footnote{For notation, refer to later sections.}
\begin{itemize}
\item[(i)] We define generalized meshes, which contain regular simplicial meshes as a particular case.
\item[(ii)] We define the boundary $\partial \mathcal{M}^*$ of a generalized mesh $\mathcal{M}^*$, and prove that it coincides with the usual notion of boundary for a regular simplicial mesh.
\item[(iii)] We describe an algorithm, called {\em virtual inflation} (see also \cite{quotientBem}), to convert a usual simplicial mesh $\mathcal{M}_\Gamma$ of a complex surface $\Gamma$ into a generalized mesh $\mathcal{M}^*_\Gamma$, which, loosely speaking, represents the orientable two-sheeted covering of $\Gamma$ \cite[p. 234]{hatcher2002algebraic}. The key property of this algorithm is that $\mathcal{M}^*_\Gamma$ is the boundary (in the sense of (ii)) of the $3$-dimensional region surrounding $\Gamma$. 
\item[(iv)] We define finite dimensional spaces $\Lambda^d(\mathcal{M}^*)$ of discrete $d$-differential forms on a generalized mesh $\mathcal{M}^*$, generalizing the usual finite and boundary elements spaces on a simplicial mesh. Their Whitney basis functions are in bijection with the set of generalized $d$-facets of $\mathcal{M}^*$. The finite (or boundary) element matrices, corresponding to variational problems set in $\Lambda^d(\mathcal{M}^*)$, can be assembled with a simple and transparent algorithm. 
\item[(v)] Finally, we generalize the restriction operator $\textup{Tr}: \Lambda^d(\mathcal{M}^*) \to \Lambda^d(\partial \mathcal{M}^*)$, which maps a discrete $d$-differential form on a generalized mesh $\mathcal{M}^*$, to its restriction (or trace) on the boundary $\partial \mathcal{M}^*$. For a 3-dimensional fractured domain, we prove that $\textup{Tr}$ is surjective (provided that the fracture satisfies a simple connectivity condition when $d = 0$). 
\end{itemize}

\subsection*{Outline}

The paper is organized as follows: in \Cref{sec:simplices}, we review some combinatorial geometry background. We then define generalized meshes in \Cref{sec:genmeshes}, and their boundary in \Cref{sec:genbound}. In \Cref{sec:fracturedMeshes}, we focus on fractured meshes and their boundaries and describe the intrinsic virtual inflation. In \Cref{sec:FEEC}, we define spaces of discrete differential forms and study the surjectivity of the trace operator. A model Finite Element application is presented in \Cref{sec:numerics}.

\section*{List of notations}

\vspace{0.5cm}
\begin{footnotesize}

$\begin{array}{lclll}
\sigma_d(S)&& \textup{$d$-subsimplices of the simplex $S$}&\\
\mathcal{F}(S) && \textup{facets of the simplex $S$}&&\\
\sigma(S) && \textup{subsimplices of the simplex $S$, including $S$}&&\\
\abs{S} && \textup{convex hull of the simplex $S$} && Eq.~\eqref{eq:defHullS}\\
{[V_1,\ldots,V_{n+1}]}&& \textup{$n$-simplex oriented by the ordering $(V_1\,,\ldots\,,V_{n+1})$}&&\\
-[S] && \textup{simplex $S$ with the orientation opposite to $[S]$}&&\\
{[F]_{|[S]}} && \textup{facet $F$ of $S$ with the orientation induced by $[S]$} && Eq.~\eqref{inducedOrientFacet}\\
\sigma_d(\mathcal{M}) && \textup{$d$-subsimplices of the triangulation $\mathcal{M}$}&&Eq.~\eqref{eq:defSigmadM}\\
\sigma(\mathcal{M}) && \textup{subsimplices of the triangulation $\mathcal{M}$} && Eq.~\eqref{eq:defSigmadM}\\
\mathcal{F}(\mathcal{M}) && \textup{facets of the triangulation $\mathcal{M}$} && Eq.~\eqref{eq:defSigmadM}\\
\underset{\mathcal{M}}{\longleftrightarrow} && \textup{adjacency in the triangulation $\mathcal{M}$}\\
\textup{st}(S,\mathcal{M}) && \textup{star of the subsimplex $S$ of $\mathcal{M}$}&& Eq.~\eqref{eq:defStarM}\\
\partial \mathcal{M} && \textup{boundary of the triangulation $\mathcal{M}$}&&Eq.~\eqref{eq:defBoundM}\\	
\abs{\mathcal{M}} && \textup{geometry of the mesh $\mathcal{M}$}&&Eq.~\eqref{eq:defGeometry}\\
\mathcal{V}_{\mathcal{M}^*}\,,\,\,\mathcal{V} && \textup{vertex set of } \mathcal{M}^*&& $\Cref{defGenMesh}$\\
\mathbf{K}_{\mathcal{M}^*}\,,\,\,\mathbf{K}  && \textup{set of elements of } \mathcal{M}^*&& $\Cref{defGenMesh}$\\
\mathcal{K}_{\mathcal{M}^*}\,,\,\,\mathcal{K}  && \textup{realization function of } \mathcal{M}^*&& $\Cref{defGenMesh}$\\
\mathcal{G}_{\mathcal{M}^*}\,,\,\,\mathcal{G}  && \textup{adjacency graph of } \mathcal{M}^*&& $\Cref{defGenMesh}$\\
\mathbb{F}(\mathcal{M}^*) && \textup{split facets of $\mathcal{M}^*$}&&Eq.~\eqref{eq:defSplitFacet}\\
\overunderset{F}{\mathcal{M}^*}{\longleftrightarrow}\,,\,\,\overset{F}{\longleftrightarrow} && \textup{adjacency through $F$ in $\mathcal{M}^*$} && Eq.\eqref{eq:notationAdj}\\
\mathcal{N}_{\mathcal{M}^*}\,,\,\, \mathcal{N} && \textup{neighbor function of $\mathcal{M}^*$}&& Eq.~\eqref{def:Nmstar}\\
\sigma_d(\mathcal{M}^*) && \textup{subsimplices of $\mathcal{M}^*$} && $\Cref{def:subsimplMstar}$\\
{[\mathbf{k}]_{\mathcal{M}^*}} && \textup{orientation of the simplex attached to $\mathbf{k}$} && $\Cref{defOrientableGenMesh}$\\
&&\textup{assigned by the orientation of $\mathcal{M}^*$}&&\\
\textup{st}(S,\mathcal{M}^*) && \textup{generalized star of $S$ in $\mathcal{M}^*$} && Eq.~\eqref{eq:defGenStar}\\
\mathcal{G}_{\mathcal{M}^*}(S), \mathcal{G}(S) && \textup{graph between elements of $\textup{st}(S,\mathcal{M}^*)$} && $\Cref{fig:StarGraph}$\\
\mathbf{S}_d(\mathcal{M}^*)&&\textup{set of generalized $d$-subfacets of $\mathcal{M}^*$} && $\Cref{def:genSubfacet}$\\
\mathbf{F}(\mathcal{M}^*) &&\textup{generalized facets of $\mathcal{M}^*$} && Eq.~\eqref{eq:defFMstar}\\
\mathbf{F}_b(\mathcal{M}^*) && \textup{boundary split facets of $\mathcal{M}^*$} && Eq.~\eqref{eq:defFbMstar}\\
\partial \mathcal{M}^* && \textup{boundary of $\mathcal{M}^*$, temporarily denoted by $\partial^*\mathcal{M}^*$} && $\Cref{defGenBound}$\\
\mathcal{M}^*_{\Omega \setminus \Gamma} && \textup{fractured mesh} && $Section \ref{sec:defFracturedMesh}$\\
\mathcal{M}^*_\Gamma(\Omega) && \textup{extrinsic inflation of $\mathcal{M}_\Gamma$ via $\mathcal{M}_\Omega$} && $\Cref{def:inflByExt}$\\
\Theta(T_1,T_2) && \textup{geometric angle between $T_1$ and $T_2$} && Eq.~\eqref{eq:defGeomAngle}\\
\angle([T_1],T_2) && \textup{oriented angle between $[T_1]$ and $T_2$} && Eq.~\eqref{eq:defOrientedAngle}\\
\mathscr{F}_\Gamma && \textup{set of all orientations of the facets of $\mathcal{M}_\Gamma$} && Eq.~\eqref{eq:defOrientFacetsDouble}\\
\mathcal{M}^*_\Gamma  && \textup{intrinsic inflation of $\mathcal{M}_\Gamma$} && $\Cref{defMeshInflGeo}$\\
\lambda^K_V && \textup{barycentric coordinate} && Eq.\eqref{eq:defBarycentricFun}\\
\omega_{\mathbf{s}} && \textup{Whitney form associated to the generalized $d$-subfacet $\mathbf{s}$} &&$\Cref{def:WhitneyForms}$\\
\Lambda^d(\mathcal{M}^*)&&\textup{Space of discrete $d$-differential forms on $\mathcal{M}^*$}&&\\
\textup{Tr} && \textup{trace operator for Whitney forms}&& $\Cref{def:trace}$\\

\end{array}$

\end{footnotesize}
\section{Preliminaries}
\label{sec:simplices}

\subsection{Simplices and orientation}

An {\em $n$-simplex} $S$ is a subset of cardinality $(n+1)$ of some {\em vertex set} $\mathcal{V}$ (e.g. $\mathcal{V}= \R^3$). The elements of $S$ are called its {\em vertices}. For $d \in \{0,\ldots,n\}$, let \nc{$\sigma_d(S)$} be the set of $(d+1)$-subsets of $S$, also called \nc{$d$-subsimplices} of $S$. The $(n-1)$-subsimplices are called \nc{facets}; we write $\nc{\mathcal{F}(S)} \isdef \sigma_{n-1}(S)$.
The set of all subsimplices of $S$, including $S$, is denoted by $\sigma(S)$. An $n$-simplex is called a vertex, an edge, a triangle and a tetrahedron for $n = 0,1,2$ and $3$.

When the vertices of an $n$-simplex $S$ are points in $\R^m$ with $n \leq m$, we systematically assume that they are affinely independent. In this case, we denote by $\nc{\abs{S}}$ the closed convex hull of $S$:\footnote{In most references, a simplex is defined as the convex hull of its vertices, but the definition that we adopt here, (i.e. a simplex is defined as a finite set of vertices) is more convenient for our needs.}
\begin{small}
\begin{equation}
\abs{S} = \textup{Hull}(\{V_1\,,\ldots\,,V_{n+1}\}) \isdef \enstq{\sum_{i = 1}^{n+1} \lambda_i V_i}{\forall i \in \{1\,\ldots\,, n+1\}, \lambda_i \geq 0, \,\,\,\sum_{i = 1}^{n+1} \lambda_i = 1}\,. \label{eq:defHullS}
\end{equation}
\end{small}

For $n \geq 1$, an {\em orientation} of a simplex $S$ is an ordering $V_1\,,\ldots\,,V_{n+1}$ of its vertices, with the rule that two orderings define the same orientation if they differ by an even permutation. If $n=0$, i.e. when $S$ is a vertex, an orientation of $S$ is simply an element of $\{+,-\}$. 

An {\em oriented simplex} $[S]$ is a simplex $S$ together with a choice of orientation. As in \cite[Chap. 5]{lee2011introduction}, the simplex $\{V_1\,,\ldots\,,V_{n+1}\}$ oriented by the ordering $(V_1\,,\ldots\,,V_{n+1})$ is denoted by $[V_1\,,\ldots\,,V_n]$. If $[S]$ is an oriented simplex, then $-[S]$ refers to the same simplex with the opposite orientation. Hence for example
\[[\textup{A},\textup{B},\textup{C},\textup{D}] = -[\textup{A},\textup{C},\textup{B},\textup{D}] = [\textup{C},\textup{A},\textup{B},\textup{D}]\,.\]

When $n \geq 2$, an oriented $n$-simplex $[S]= [V_1,\ldots,V_{n+1}]$ {\em induces an orientation} on its facets: for the facet $F_i = \{V_1,\ldots,\widehat{V_i},\ldots,V_{n+1}\}$, where the hat denotes an omission, the induced orientation is
\begin{equation}
\label{inducedOrientFacet}
[F_i]_{|[S]} \isdef (-1)^{i+1}[V_1,\ldots,\widehat{V_i},\ldots,V_{n+1}]\,.
\end{equation}
For $n = 1$, given an oriented edge $E = [V_1,V_2]$, we adopt the convention
\[[V_1]_{|[E]} \isdef [V_1,-], \quad [V_2]_{|[E]} \isdef [V_2,+]\,.\] 
Let $n \geq 1$, and $S_1$, $S_2$ be two $n$-simplices sharing a facet $F$. Then we say that $[S_1]$ and $[S_2]$ are {\em consistently oriented} if
\begin{equation}
\label{eq:defConsistOrient}
[F]_{|[S_1]} = - [F]_{|[S_2]}
\end{equation}
i.e. they induce {\bf opposite} orientations on their common facet. If $\mathcal{V} \subset \R^n$, then the $n$-simplices have a {\em natural orientation}: $[V_1\,,\ldots\,,V_{n+1}]$ is naturally oriented if
\[\det \left[V_{2} - V_1\,,\ldots\,,V_{n+1} - V_1\right] > 0\,.\]
We record the following two elementary results for later use.
\begin{lemma}
\label{lemConsist}
If $[K]$ is an oriented $n$-simplex with $n \geq 2$, and $F$ and $F'$ are two distinct facets of $K$, then $[F]_{|[K]}$ and $[F']_{|[K]}$ are consistently oriented. 
\end{lemma}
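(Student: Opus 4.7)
The plan is to prove the statement by a direct computation using formula \eqref{inducedOrientFacet}, carefully tracking signs coming from omitted vertices. Write $[K]=[V_1,\ldots,V_{n+1}]$, and without loss of generality let $F = F_i$ and $F' = F_j$ with $i<j$, where $F_k = \{V_1,\ldots,\widehat{V_k},\ldots,V_{n+1}\}$. The two facets $F_i, F_j$ share the $(n-2)$-subsimplex
\[
G \isdef \{V_1,\ldots,\widehat{V_i},\ldots,\widehat{V_j},\ldots,V_{n+1}\},
\]
and to establish consistency in the sense of \eqref{eq:defConsistOrient} I need to show that $[G]_{|[F_i]_{|[K]}}$ and $[G]_{|[F_j]_{|[K]}}$ are opposite orientations of $G$.

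First I treat the case $n\geq 3$, where both inductions of orientation (from $[K]$ onto $F$, and from $F$ onto $G$) are governed by formula \eqref{inducedOrientFacet}. From \eqref{inducedOrientFacet}, $[F_i]_{|[K]} = (-1)^{i+1}[V_1,\ldots,\widehat{V_i},\ldots,V_{n+1}]$. In this ordering of the vertices of $F_i$, the vertex $V_j$ sits at position $j-1$ (one vertex before it, namely $V_i$, has been removed). Applying \eqref{inducedOrientFacet} again to omit $V_j$ from $F_i$ yields
\[
[G]_{|[F_i]_{|[K]}} = (-1)^{i+1}\cdot(-1)^{(j-1)+1}[G_0] = (-1)^{i+j+1}[G_0],
\]
where $[G_0]$ denotes the natural ordering $[V_1,\ldots,\widehat{V_i},\ldots,\widehat{V_j},\ldots,V_{n+1}]$. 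Symmetrically, $[F_j]_{|[K]}=(-1)^{j+1}[V_1,\ldots,\widehat{V_j},\ldots,V_{n+1}]$, and here $V_i$ sits at position $i$ (no vertex before it was removed since $i<j$), giving
\[
[G]_{|[F_j]_{|[K]}} = (-1)^{j+1}\cdot(-1)^{i+1}[G_0] = (-1)^{i+j}[G_0].
\]
These differ by a sign, which proves $[G]_{|[F_i]_{|[K]}} = -\,[G]_{|[F_j]_{|[K]}}$.

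The remaining case $n=2$ has to be handled by hand, because then $F, F'$ are edges and the induction of orientation onto the common vertex $G$ follows the special convention $[V_1]_{|[E]}=[V_1,-]$, $[V_2]_{|[E]}=[V_2,+]$ rather than \eqref{inducedOrientFacet}. A direct inspection of the three facets of $[K]=[V_1,V_2,V_3]$, namely $[V_2,V_3]$, $[V_3,V_1]$, $[V_1,V_2]$, gives for each shared vertex opposite induced signs (e.g. for the shared vertex $V_3$ of $F_1, F_2$: $[V_3,+]$ from $[V_2,V_3]$ and $[V_3,-]$ from $[V_3,V_1]$), and the lemma is verified. The only real obstacle is the sign bookkeeping in the $n\geq 3$ case, which the position-tracking argument above handles cleanly.
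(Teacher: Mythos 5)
Your computation is correct and complete: the sign bookkeeping for $n\geq 3$ checks out ($[G]_{|[F_i]_{|[K]}}=(-1)^{i+j+1}[G_0]$ versus $[G]_{|[F_j]_{|[K]}}=(-1)^{i+j}[G_0]$, which are opposite), and you rightly isolate the $n=2$ case, where the induced orientation on a shared vertex follows the special $\{+,-\}$ convention rather than \eqref{inducedOrientFacet}. The paper states this lemma without proof (as an ``elementary result''), so there is nothing to compare against; your direct position-tracking argument is exactly the standard way to verify it, with the only implicit step being that inducing an orientation commutes with reversing orientation, which is immediate from \eqref{inducedOrientFacet}.
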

\begin{lemma}[{see e.g. \cite[Prop. 5.16]{lee2011introduction}}]
\label{lemConsistNatural}
Let $[K]$ and $[K']$ be two naturally oriented $n$-simplices in $\R^n$ that share a facet, but have disjoint interiors. Then $[K]$ and $[K']$ are consistently oriented.
\end{lemma}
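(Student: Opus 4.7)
My plan is to reduce the consistency condition $[F]_{|[K]} = -[F]_{|[K']}$ to a sign comparison of two determinants, using the characterization of natural orientation. I will write $F = \{V_1,\dots,V_n\}$ for the shared facet and let $V, V'$ denote the remaining vertices of $K$ and $K'$ respectively. Since a transposition of two facet vertices flips the orientation class, I may assume after relabeling that $[K] = [V_1,\dots,V_n,V]$ is naturally oriented; it then suffices to show that $[K'] = -[V_1,\dots,V_n,V']$, after which the induced orientations on $F$ can be read off from \eqref{inducedOrientFacet}.

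The main obstacle is the geometric step, namely that $V$ and $V'$ lie strictly on opposite sides of the affine hyperplane $H = \textup{Aff}(V_1,\dots,V_n)$ supporting $F$. I will argue by contradiction: if $V$ and $V'$ were on the same side of $H$, then near any point $p$ in the relative interior of $F$, a sufficiently small open half-ball on that common side would be contained in both $\textup{int}\abs{K}$ and $\textup{int}\abs{K'}$, contradicting the assumed disjointness of interiors.

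With this geometric input, I fix a unit normal $\nu$ to $H$ and expand
\[V - V_1 = \sum_{i=2}^n a_i (V_i - V_1) + \alpha \nu, \qquad V' - V_1 = \sum_{i=2}^n b_i (V_i - V_1) + \beta \nu,\]
so that $\alpha \beta < 0$. Multilinearity of the determinant in the last column annihilates all $(V_i - V_1)$ contributions, hence $\det[V_2-V_1,\dots,V_n-V_1,V-V_1]$ and $\det[V_2-V_1,\dots,V_n-V_1,V'-V_1]$ share respectively the signs of $\alpha$ and $\beta$, up to the common nonzero factor $\det[V_2-V_1,\dots,V_n-V_1,\nu]$. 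Since the former is positive by the natural orientation of $[K]$, the latter must be negative, so indeed $[K'] = -[V_1,\dots,V_n,V']$.

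Finally, applying \eqref{inducedOrientFacet} at index $i = n+1$ to $[V_1,\dots,V_n,V]$ and to $[V_1,\dots,V_n,V']$ yields the same induced orientation $(-1)^{n}[V_1,\dots,V_n]$ on $F$. Since $[K'] = -[V_1,\dots,V_n,V']$ and reversing the orientation of $[S]$ flips the induced orientation on each facet (an immediate check from \eqref{inducedOrientFacet} after an odd permutation of the ordering), I conclude $[F]_{|[K']} = -[F]_{|[K]}$, which is precisely the consistency condition \eqref{eq:defConsistOrient}.
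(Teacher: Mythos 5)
Your proof is correct and is essentially the standard argument: the paper gives no proof of its own, deferring to Lee's Prop.~5.16, whose argument likewise rests on the observation that the two opposite vertices lie strictly on opposite sides of the hyperplane spanned by the common facet, so that the two determinants defining the natural orientations have opposite signs and hence force opposite induced orientations on that facet. The only detail worth noting is that your final step invokes \eqref{inducedOrientFacet} with $i=n+1$, which the paper only defines for $n\ge 2$; the case $n=1$ needs a separate one-line check using the paper's convention $[V_1]_{|[E]}=[V_1,-]$, $[V_2]_{|[E]}=[V_2,+]$ for the endpoints of an oriented edge.
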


\begin{remark}
Choosing an orientation of a triangle $T$ in $\R^3$ is equivalent to deciding on a unit normal vector $\vec n$ on $T$: the orientation $[\rm A,B,C]$ corresponds to the vector \begin{equation}
\label{eq:defNormalOrient}
\vec n = \frac{\overrightarrow{AB} \times \overrightarrow{AC}}{\norm{\overrightarrow{AB} \times \overrightarrow{AC}}}\,.
\end{equation}
\end{remark}

\subsection{Triangulations}

An $n$-dimensional {\em triangulation} $\mathcal{M}$ is a finite set of $n$-simplices that we call the {\em elements} of $\mathcal{M}$. The {\em $d$-subsimplices} of $\mathcal{M}$ are defined as
\begin{equation}
\label{eq:defSigmadM}
\nc{\sigma_d(\mathcal{M})} \isdef \bigcup_{K \in \mathcal{M}} \sigma_d(K)\,, \quad \nc{\mathcal{F}(\mathcal{M})} \isdef \sigma_{n-1}(\mathcal{M})\,,\quad \nc{\sigma(\mathcal{M})} \isdef \bigcup_{d \leq n} \sigma_d(\mathcal{M})\,.
\end{equation}
Thus for a triangulation $\mathcal{M}$, $\sigma(\mathcal{M})$ is a {\em pure} abstract simplicial complex \cite[Chap. 7]{moise}. In an abstract triangulation $\mathcal{M}$, two subsimplices $S \in \sigma_d(\mathcal{M})$ and $S' \in \sigma_{d'}(\mathcal{M})$ are {\em incident} if 
$$\abs{d - d'} = 1 \textup{ and } S \subset S' \textup{ or } S' \subset S\,.$$ 
Two $d$-subsimplices $S$ and $S'$ are {\em adjacent} if they are incident to a common $(d-1)$-subsimplex. In this case, we write
\nc{$S\underset{\mathcal{M}}{\longleftrightarrow}S'$}. We say that a triangulation $\mathcal{M}$ is {\em branching} if at least one facet of $\mathcal{M}$ is incident to more than two elements of $\mathcal{M}$, and {\em non-branching} otherwise.

The {\em star} \cite{bryant} of a subsimplex $S \in \sigma(\mathcal{M})$ is the triangulations defined by 
\begin{eqnarray}
\nc{\textup{st}(S,\mathcal{M})} &\isdef& \enstq{K \in \mathcal{M}}{S \subset K}\,, \label{eq:defStarM}
\end{eqnarray}
If $n \geq 1$, the {\em boundary} $\partial \mathcal{M}$ of an $n$-dimensional triangulation $\mathcal{M}$ is the $(n-1)$-dimensional triangulation defined by
\begin{equation}
\label{eq:defBoundM}
\partial \mathcal{M} \isdef \enstq{F \in \mathcal{F}(\mathcal{M})}{F \textup{ is incident to exactly one element of } \mathcal{M}}\,.
\end{equation} 
When $n = 0$, $\partial \mathcal{M}$ is the empty set. One can check that if $\mathcal{M}$ is non-branching, then $\partial \partial \mathcal{M} = \emptyset$. 
The boundary of a non-branching triangulation can be branching. 
\begin{example}
\label{exTriangulations}
Let $\mathcal{V}=  \{A,B,C,D,E\}$ and
$$\mathcal{M} = \{\{A,B,C\},\{A,B,D\}\,,\{A,C,D\}\,,\{B,C,D\}\,,\{C,D,E\}\}\,,$$ then $\partial \mathcal{M} = \{\{C,E\}\,,\{D,E\}\}$, $\partial \partial \mathcal{M} = \{\{C\},\{D\}\}$ and $\partial \partial \partial \mathcal{M} = \emptyset$. The triangulation 
\begin{equation}
\label{counterExNonBranch}
\mathcal{M} = \{\{A,B,C\},\{C,D,E\}\}
\end{equation}
is non-branching, while its boundary is branching. \hfill$\triangle$
\end{example}

An {\em orientation} of a triangulation $\mathcal{M}$ is a choice of an orientation of each of its simplices. An {\em oriented triangulation} is a triangulation equipped with an orientation. If $\mathcal{M}$ is oriented, the oriented simplex corresponding to the element $K \in \mathcal{M}$ is denoted by $[K]_{\mathcal{M}}$. 

The orientation of $\mathcal{M}$ is called {\em compatible} if for any two adjacent elements $K$ and $K'$, $[K]_{\mathcal{M}}$ and $[K']_{\mathcal{M}}$ are consistently oriented. An abstract triangulation which admits a compatible orientation is called {\em orientable}. It is easy to see that an orientable triangulation must be non-branching. Therefore, the triangulation \eqref{counterExNonBranch} of Example \ref{exTriangulations} shows that the boundary of an orientable triangulation may be non-orientable boundary. 

\subsection{Meshes}

An $n$-dimensional triangulation $\mathcal{M}$ is called a {\em mesh} if its vertex set $\mathcal{V}$ is a subset of $\R^m$ with $n\leq m$, and if 
\begin{equation}
\label{condMesh}
\forall (K,K') \in \mathcal{M}\times \mathcal{M}\,, \quad \abs{K} \cap \abs{K'} = \abs{K\cap K'}\,.
\end{equation}
In other words, the intersection of the convex hulls of two elements is either empty or equal to the convex hull of a common subsimplex. 
We write
\begin{equation}
\label{eq:defGeometry}
\nc{\abs{\mathcal{M}}} \isdef \bigcup_{K\in \mathcal{M}} \abs{K}\,,
\end{equation}
and say that $\mathcal{M}$ is {\em regular} if $\abs{\mathcal{M}}$ is an $n$-manifold with or without boundary, meaning that each point $x \in \abs{\mathcal{M}}$ has a neighborhood in $\abs{\mathcal{M}}$ that is homeomorphic to either $\R^n$ or $\R^n_+ \isdef \R^{n-1} \times \R^+$. An example of a non-regular mesh is given by \Cref{example0}, while \Cref{fig:link} shows the the star of a vertex in a regular mesh.

\begin{figure}
\centering
\begin{minipage}[t]{0.45\textwidth}
\centering
\includegraphics[width=0.8\textwidth]{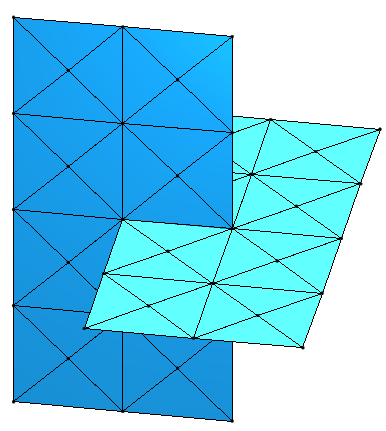}
\caption{A non-regular triangular mesh.}
\label{example0} 
\end{minipage}\hfill
\begin{minipage}[t]{0.45\textwidth}
\centering
\includegraphics[width=\textwidth]{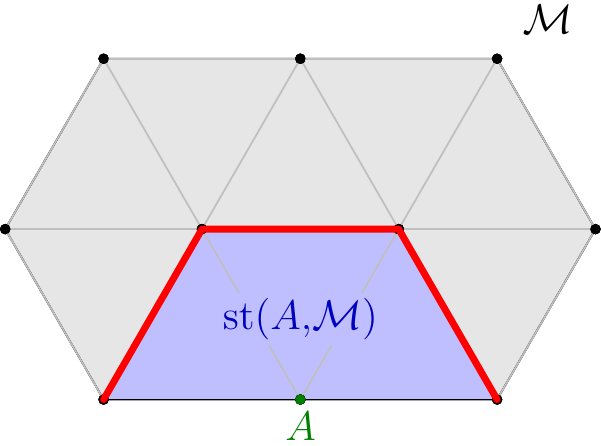}
\caption{A regular triangular mesh. The star of the vertex $A$, is represented in blue.}
\label{fig:link}
\end{minipage}
\end{figure}


Every facet of a regular mesh is incident to at most two elements (see e.g. \cite[Lemma 11.1.2]{boissonnat}). Hence, when $n=3$, this notion of regular mesh corresponds to that of Ciarlet \cite[Chap. 2]{ciarlet}, which is well-established in finite element literature.  In particular, regular meshes are non-branching. Moreover, when $\mathcal{M}$ is regular, then so is its boundary  $\partial \mathcal{M}$, and $\abs{\partial \mathcal{M}}$ is the manifold boundary of $\abs{\mathcal{M}}$ (cf. \cite[p. 4]{bryant}).

We also record the following classical property of regular meshes for later use.
\begin{lemma}
\label{boisLemma}
If $\mathcal{M}$ is a regular mesh and $S \in \sigma(\mathcal{M})$, then the star of $S$ is \emph{face-connected}, in the sense that for any two elements $K$ and $K'$ in $\textup{st}(S,\mathcal{M})$, there are elements $K_1\,,\ldots\,,K_Q$ of $\textup{st}(S,\mathcal{M})$ such that
\begin{itemize}
\item[(i)] $K_1 = K$
\item[(ii)] $K_Q = K'$
\item[(iii)] $\forall i \in \{1\,,\ldots\,,Q-1\}\,,\,\, K_i \underset{\mathcal{M}}{\longleftrightarrow}K_{i+1}\,.$
\end{itemize} 
\end{lemma}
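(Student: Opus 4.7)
My plan is to reduce face-connectedness of $\textup{st}(S,\mathcal{M})$ to path-connectedness of a small neighborhood of a point in the relative interior of $|S|$, exploiting the manifold property of $|\mathcal{M}|$ supplied by regularity.

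First I would dispose of the easy cases $\dim S \in \{n-1, n\}$. If $\dim S = n$, the star is the singleton $\{S\}$ and there is nothing to prove. If $\dim S = n-1$, then $\textup{st}(S,\mathcal{M})$ has at most two elements (by the manifold property recalled just before the lemma), and if two are present they share the facet $S$ itself, hence are adjacent.

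Assume from now on $d \isdef \dim S \le n - 2$. Fix $x$ in the relative interior of $|S|$. For every $K'' \in \mathcal{M} \setminus \textup{st}(S,\mathcal{M})$, the mesh condition \eqref{condMesh} gives $|K''| \cap |S| = |K'' \cap S|$, which is a proper face of $|S|$ not containing $x$; since $|K''|$ is closed, $d(x, |K''|) > 0$, and finiteness of $\mathcal{M}$ lets me choose $\varepsilon > 0$ with $B_\varepsilon(x) \cap |\mathcal{M}| \subset \bigcup_{K'' \in \textup{st}(S,\mathcal{M})} |K''|$. Regularity makes $|\mathcal{M}|$ a topological manifold, hence locally path-connected at $x$; shrinking further I obtain a path-connected open neighborhood $V$ of $x$ in $|\mathcal{M}|$ still contained in $\bigcup_{K'' \in \textup{st}(S,\mathcal{M})} |K''|$. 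For every $K'' \in \textup{st}(S,\mathcal{M})$, the relative interior of $|K''|$ (i.e., $|K''|$ minus its proper faces) is dense in $|K''|$ and disjoint from every other element by \eqref{condMesh}, so it meets $V$. I can therefore pick $p \in V$ in the relative interior of $|K|$ and $p' \in V$ in the relative interior of $|K'|$, and connect them by a continuous path $\gamma \colon [0,1] \to V$.

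The main technical hurdle, which I expect to be the principal obstacle, is extracting from $\gamma$ a chain $K_1 = K, K_2, \ldots, K_Q = K'$ of elements of $\textup{st}(S,\mathcal{M})$ with $K_i \underset{\mathcal{M}}{\longleftrightarrow} K_{i+1}$ for every $i$. A priori $\gamma$ may cross subsimplices of arbitrarily low dimension, and two elements meeting only in a subsimplex of dimension $\le n-2$ do not qualify as adjacent in the sense of \eqref{eq:defSigmadM}. I would handle this by a standard transversality argument: using a local PL chart provided by regularity around $x$, perturb $\gamma$ inside $V$ so that it avoids the finite union of closed subsets of codimension at least $2$ formed by all subsimplices of dimension $\le n-2$ of elements of $\textup{st}(S,\mathcal{M})$. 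Every element transition along the perturbed path then occurs across a shared facet, which by \eqref{condMesh} is common to the two elements and necessarily contains $S$; enumerating these transitions produces the desired face-connected chain inside $\textup{st}(S,\mathcal{M})$.
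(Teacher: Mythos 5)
Your route is genuinely different from the paper's, which simply invokes two external results (the star of a subsimplex in a regular mesh is PL-homeomorphic to a cube, hence a connected regular mesh, and connected regular meshes are face-connected). Your reduction to path-connectedness of a small punctured neighborhood of a point $x$ in the relative interior of $\abs{S}$ is the standard direct argument, and the preparatory steps are sound: the choice of $\varepsilon$ isolating the star, the density of relative interiors, and the reading-off of a facet-adjacent chain from a path that avoids the $(n-2)$-skeleton all work (via condition \eqref{condMesh}, a crossing point in the relative interior of a facet of one element that also lies in a second element forces that facet to be their common facet). Two small remarks: the assertion that each shared facet ``necessarily contains $S$'' requires shrinking $\varepsilon$ below the distance from $x$ to every facet of a star element \emph{not} containing $S$ (which is positive, since such a facet omits a vertex of $S$), but this is not needed for the lemma, whose conclusion is only adjacency in $\mathcal{M}$.

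The genuine gap is in the step you yourself flag as the principal obstacle. You justify pushing $\gamma$ off the codimension-$2$ skeleton by ``a standard transversality argument, using a local PL chart provided by regularity'', but the paper's regularity hypothesis only makes $\abs{\mathcal{M}}$ a \emph{topological} manifold: the chart is a homeomorphism with no affine or PL compatibility with the triangulation, so simplices need not be mapped to polyhedra and PL general position is not available in the chart. (Working PL-intrinsically in the complex instead would require $\mathcal{M}$ to be a combinatorial manifold, which is strictly stronger than the stated hypothesis in high dimensions.) This is exactly the step where the manifold assumption does all the work -- it is what fails at a point contact, where two triangles meet only in a vertex and no perturbation can avoid it -- so it cannot be left as ``standard''. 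The cleanest repair is to avoid perturbation altogether: the codimension-$2$ skeleton meets $V$ in a closed subset $\Sigma$ of topological dimension at most $n-2$, and by the classical dimension-theoretic separation theorem a closed set of dimension at most $n-2$ does not disconnect a connected open subset of an $n$-manifold (with a short extra argument near $\partial\abs{\mathcal{M}}$); hence $V\setminus\Sigma$ is a connected, locally path-connected open set containing $p$ and $p'$, and one chooses $\gamma$ inside it directly. With that substitution your proof goes through.
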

\begin{proof}
Following the comment below Theorem 2.2 in \cite{bryant}, the mesh $\textup{st}(S,\mathcal{M})$ is piecewise linearly homeomorphic to $[0,1]^{n}$. In particular, it is a connected regular mesh, so the result follows from \cite[Lemma 11.1.3]{boissonnat}.
\end{proof}

\subsection{Data structures}
In our Matlab implementation \cite{matlabCode}, meshes over $\mathcal{V} \subset \R^3$ are manipulated via the toolbox {\tt openMsh} from \cite{gypsilab}. The mesh class declaration reads\\

\begin{sampleCode}[plain]{}
\CLASS{\,msh}\\
\mbox{\bfseries properties}\\
\begin{array}{llll}
&{\tt vtx\,;} & & \texttt{\% Nvtx x 3 array of reals coordinates}\\
&{\tt  elt\,;} & &\texttt{\% Nelt x (n+1) array with entries in \{1,...,Nvtx\}}
\end{array}\\
\mbox{\bfseries end}
\end{sampleCode}\\
The points of $\mathcal{V}$ are stored in the array $\tt vtx$. Each line of {\tt vtx} corresponds to a vertex $V \in \mathcal{V}$, the three column giving the $x, y$ and $z$ coordinates. The lines of the array $\tt elt$ are mutually distinct and encode the simplices of $\mathcal{M}$, referring to the vertices by their index in $\tt vtx$. This is essentially the same data structure as in the widely used meshing tool Gmsh \cite{gmsh}. Many alternative choices of data structure exist to represent more general simplicial complexes, see \cite{de2005data} for a review. 

\section{Generalized meshes}
\label{sec:genmeshes}

We now introduce generalized meshes. Roughly speaking, they are defined by a set of elements -- each of which is assigned a realization as an $n$-simplex -- and a specified adjacency between those elements. The adjacency relation must respect the following conditions: (i)  each element has at most one neighbor per facet, and (ii) two elements can only be adjacent if their realizations have a common facet. A more precise definition is given below, and is illustrated by several examples afterwards. We then define generalized subfacets, and briefly describe an algorithm to compute them.

\subsection{Definitions and examples}

\begin{mdframed}
\begin{definition}[Generalized mesh, or ``gen-mesh"]
\label{defGenMesh}
For $n \geq 0$, an $n$-dimensional {\em generalized mesh} $\mathcal{M}^*$ is a quadruple $(\mathcal{V}_{\mathcal{M}^*},\mathbf{K}_{\mathcal{M}^*},\mathcal{K}_{\mathcal{M}^*},\mathcal{G}_{\mathcal{M}^*})$ where
\begin{itemize}
\item[$\bullet$] $\mathcal{V}_{\mathcal{M}^*}$ is a set called the {\em vertex set} of $\mathcal{M}^*$. 
\item[$\bullet$] $\mathbf{K}_{\mathcal{M}^*}$ is a finite set. Its elements are called {\em elements} of $\mathcal{M}^*$; we write $\mathbf{k} \in \mathcal{M}^*$ as short for $\mathbf{k} \in \mathbf{K}_{\mathcal{M}^*}$. 
\item[$\bullet$] $\mathcal{K}_{\mathcal{M}^*}$ is a {\em realization function}, mapping each element $\mathbf{k} \in \mathcal{M}^*$ to a simplex $\mathcal{K}_{\mathcal{M}^*}(\mathbf{k})$ over $\mathcal{V}_{\mathcal{M}^*}$. The simplex $\mathcal{K}_{\mathcal{M}^*}(\mathbf{k})$ 
is called {\em the simplex attached to $\mathbf{k}$}. If $\mathcal{V}_{\mathcal{M}^*} \subset \R^m$ for some $m \geq n$, then the simplices attached to the elements of $\mathcal{M}^*$ must be non-degenerate. The realization function need not be injective: the same simplex may be attached to several distinct elements. 
\item[$\bullet$] $\mathcal{G}_{\mathcal{M}^*}$ is a graph between the \nc{split facets} of $\mathcal{M}^*$, by which we mean the pairs of the form 
\begin{equation}
	\label{eq:defSplitFacet}
	(F,\mathbf{k})\,,\,\,\textup{with}\,\,\, \mathbf{k} \in \mathcal{M}^*  \textup{ and } F \textup{ a facet of } K = \mathcal{K}_{\mathcal{M}^*}(\mathbf{k})\,.
\end{equation}
We write $\nc{\mathbb{F}(\mathcal{M}^*)}$ for the set of split facets. The graph $\mathcal{G}_ {\mathcal{M}^*}$ is called {\em the adjacency graph} of $\mathcal{M}^*$, and is assumed to satisfy the following axioms:
\begin{itemize}
	\item[(i)] the nodes of $\mathcal{G}_{\mathcal{M}^*}$ have degree $0$ or $1$, and
	\item[(ii)] if two split facets $(F,\mathbf{k})$ and $(F',\mathbf{k'})$ are connected by an edge in $\mathcal{G}_{\mathcal{M}^*}$, then 
	$${F = F'} \textup{ and } \mathbf{k} \neq \mathbf{k}'\,.$$
\end{itemize}. 
\end{itemize}
\end{definition}
\end{mdframed}
\begin{remark}
\label{rem:facetUse}
The split facets introduced above correspond exactly to {\em facet uses} in classical non-manifold mesh structures \cite[p. 175]{weiler1986topological}. We shall not require other entity uses. Instead, the important concept is that of generalized subfacets, introduced in \Cref{sec:genfacets}.
\end{remark}
\begin{example}[Domain with a crack] 
\label{example1}
Let $\mathcal{M}^*_1$ be the generalized triangular mesh (see \Cref{crackDomainExample}) defined by
\[\mathcal{V}_{\mathcal{M}^*_1} = \{\textup{A}\,,\ldots\,,\textup{J}\}\,, \quad \mathbf{K}_{{\mathcal{M}^*_1}}= \{1\,,\ldots\,,10\}\,,\] 
with the realization function $\mathcal{K}_{\mathcal{M}^*_1} = \mathcal{K}$ defined by 
\[\mathcal{K}(1) = \textup{ABC}\,, \,\,\mathcal{K}(2) = \textup{BCD}\,, \,\,\mathcal{K}(3) = \textup{BDE}\,, \,\,\mathcal{K}(4) = \textup{BEF}\,,\,\, \mathcal{K}(5) = \textup{BFG} \]
\[\mathcal{K}(6) = \textup{ABG}\,, \,\,\mathcal{K}(7) = \textup{AGH}\,,\,\, \mathcal{K}(8) = \textup{AHI}\,,\,\, \mathcal{K}(9) = \textup{AIJ}\,,\,\,\mathcal{K}(10) = \textup{ACJ}.\]
\begin{figure}[H]
\centering
\includegraphics[width=0.5\linewidth]{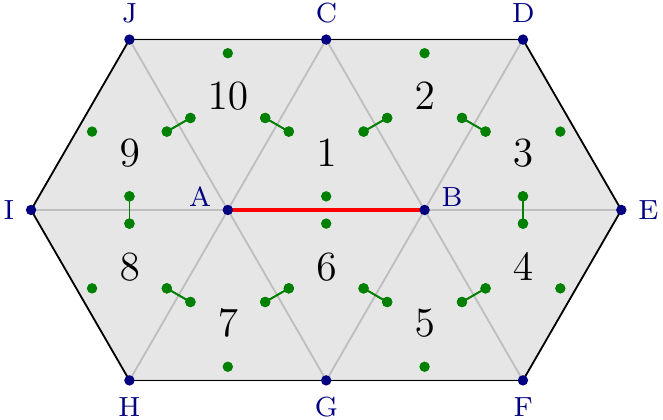}
\caption{Graphical representation of the generalized mesh $\mathcal{M}^*_1$ of Example \ref{example1}. The adjacency graph $\mathcal{G}_{\mathcal{M}^*_1}$ is shown in green. The dot in an element $\mathbf{i}$ and near an edge $E$ is associated to the split facet $(E,\mathbf{i}) \in \mathbb{F}(\mathcal{M}^*_1)$. Even though the elements $1$ and $6$ have the edge AB (highlighted in red) in common, they are not adjacent.}
\label{crackDomainExample}
\end{figure}
\noindent There are $3\times 10 = 30$ split facets (green dots in \Cref{crackDomainExample}), the first few are given by
\[\mathbb{F}(\mathcal{M}^*_1) = \{(\textup{AB},1),(\textup{BC},1),(\textup{AC},1)\,,(\textup{BC},2)\,,(\textup{CD},2),(\textup{BD},2)\,,\ldots\}\,.\]
The adjacency graph is represented by green segments in \Cref{crackDomainExample}. The two split facets (AB,$1$) and (AB,$6$) are not connected: this creates the ``fracture", highlighted in red. \hfill $\triangle$
\end{example}

\label{sec:examples}
\begin{example}[Two-sided segment]
\label{example2}
One can consider the generalized edge mesh $\mathcal{M}^*_2$ (see \Cref{fig:Mstar2}) with 
\[\mathcal{V}_{\mathcal{M}^*_2} = \{\textup{A},\textup{B}\}\,, \quad \mathbf{K}_{\mathcal{M}^*_2} = \{1,2\}\,,\]
and with the realization $\mathcal{K}_{\mathcal{M}^*_2}(1) = \mathcal{K}_{\mathcal{M}^*_2}(2) = \textup{AB}$. There are $4$ split facets:
\[\mathbb{F}(\mathcal{M}^*_2) = \{(\textup{A},1),(\textup{B},1),(\textup{A},2),(\textup{B},2)\}\] and the adjacency graph is represented in green in \Cref{fig:Mstar2}. 
\begin{figure}[H]
\centering
\includegraphics[]{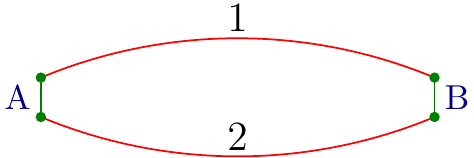}
\caption{Graphical representation of the generalized mesh $\mathcal{M}^*_2$ of Example \ref{example2}, where the two red curves, supposed to be two instances of the segment $\textup{AB}$, shoud be on top of each other. They have been torn apart to visualize the adjacency graph, represented in green.}
\label{fig:Mstar2}
\end{figure}
The two elements of $\mathcal{M}^*_2$ can be thought of as the upper and lower sides of the segment $\textup{AB}$. We shall see that this generalized mesh is one of the components of the boundary of the generalized mesh $\mathcal{M}^*_1$ from the previous example. \hfill $\triangle$
\end{example}

When two split facets $(F,\mathbf{k})$ and $(F',\mathbf{k'})$ of generalized mesh $\mathcal{M}^*$ are connected in the graph $\mathcal{G}_{\mathcal{M}^*}$, we write 
\begin{equation}
\label{eq:notationAdj}
\mathbf{k} \overunderset{F}{\mathcal{M}^*}{\longleftrightarrow} \mathbf{k}'\,,
\end{equation}
and say that $\mathbf{k}$ and $\mathbf{k}'$ are {\em adjacent through $F$}. With this notation, the adjacency graph of $\mathcal{M}^*_1$ from \Cref{example1} can be summarized by
\[1 \overunderset{\textup{BC}}{\mathcal{M}^*_1}{\longleftrightarrow} 2 \overunderset{\textup{BD}}{\mathcal{M}^*_1}{\longleftrightarrow} 3 \overunderset{\textup{BE}}{\mathcal{M}^*_1}{\longleftrightarrow} 4 \overunderset{\textup{BF}}{\mathcal{M}^*_1}{\longleftrightarrow} 5 \overunderset{\textup{BG}}{\mathcal{M}^*_1}{\longleftrightarrow} 6 \overunderset{\textup{AG}}{\mathcal{M}^*_1}{\longleftrightarrow} 7 \overunderset{\textup{AH}}{\mathcal{M}^*_1}{\longleftrightarrow} 8 \overunderset{\textup{AI}}{\mathcal{M}^*_1}{\longleftrightarrow} 9 \overunderset{\textup{AH}}{\mathcal{M}^*_1}{\longleftrightarrow} 10 \overunderset{\textup{AC}}{\mathcal{M}^*_1}{\longleftrightarrow} 1\,,\]
and that of $\mathcal{M}^*_2$ in \Cref{example2} by
\[1 \overunderset{A}{\mathcal{M}^*_2}{\longleftrightarrow} 2 \overunderset{B}{\mathcal{M}^*_2}{\longleftrightarrow} 1\,.\]
If $\mathbf{k} \overunderset{F}{\mathcal{M}^*}{\longleftrightarrow} \mathbf{k}'$ and $S \in \sigma(F)$, then we also say that $\mathbf{k}$ and $\mathbf{k}'$ are adjacent through $S$ and extend the notation $\mathbf{k} \overunderset{S}{\mathcal{M}^*}{\longleftrightarrow} \mathbf{k}'$ to this case. 
For each split facet $(F,\mathbf{k}) \in \mathbb{F}(\mathcal{M}^*)$, by requirement (i), there is at most one element $\mathbf{k}'$ such that $\mathbf{k} \overunderset{F}{\mathcal{M}^*}{\longleftrightarrow} \mathbf{k}'$. If such an an element exist, it is called {\em the neighbor of $\mathbf{k}$} {\em through} $F$. Otherwise, we write $\mathbf{k} \overunderset{F}{\mathcal{M}^*}{\longleftrightarrow}\, \perp$. This way, for each element $\mathbf{k} \in \mathcal{M}^*$, we can define a {\em neighbor function} $$\mathcal{N}_{\mathcal{M}^*}(\mathbf{k},\cdot): \mathcal{F}(K) \to \mathbf{K}_{\mathcal{M}^*} \cup \{\perp\}\,,$$ 
mapping each facet $F$ of the simplex $K = \mathcal{K}_{\mathcal{M}^*}(\mathbf{k})$ to the neighbor of $\mathbf{k}$ through $F$:
\begin{equation}
\label{def:Nmstar}
\forall F \in \mathcal{F}(K)\,, \quad \nc{\mathcal{N}_{\mathcal{M}^*}(\mathbf{k},F)} \isdef \begin{cases}
\mathbf{k'} & \textup{if } \mathbf{k} \overunderset{F}{\mathcal{M}^*}{\longleftrightarrow} \mathbf{k'}\,, \\[1em]
\perp & \textup{if } \mathbf{k} \overunderset{F}{\mathcal{M}^*}{\longleftrightarrow}\, \perp\,. 
\end{cases}
\end{equation}
For example, in the gen-mesh $\mathcal{M}^*_1$ from \Cref{example1}, the neighbors of the element $1$ are given by
\[\mathcal{N}_{\mathcal{M}^*_1}(1,\textup{BC}) = 2\,, \quad \mathcal{N}_{\mathcal{M}^*_1}(1,\textup{AC}) = 10\,, \quad \mathcal{N}_{\mathcal{M}^*_1}(1,\textup{AB}) =\, \perp\,.\]
When the generalized mesh under consideration is sufficiently clear from the context, we drop the subscript $\mathcal{M}^*$ from the notation, i.e. write $\mathcal{V},\mathbf{K}$, $\mathcal{K}$, $\mathcal{G}$, $\overset{F}{\longleftrightarrow}$, instead of $\mathcal{V}_{\mathcal{M}^*},\mathbf{K}_{\mathcal{M}^*}$, $\mathcal{K}_{\mathcal{M}^*}$, $\mathcal{G}_{\mathcal{M}^*}$, $\overunderset{F}{\mathcal{M}^*}{\longleftrightarrow}$, and so on.

\begin{mdframed}
\begin{definition}[Subsimplices of a generalized mesh]
\label{def:subsimplMstar}
Let $\mathcal{M}^*$ be an $n$-dimensional gen-mesh, $\mathbf{k} \in \mathcal{M}^*$ and $K$ the simplex attached to $\mathbf{k}$. The subsimplices and facets of an element $\mathbf{k}$ are defined by
\[\forall d \in \{0,\ldots,n\}\,, \,\, \nc{\sigma_d(\mathbf{k})} \isdef \sigma_d(K)\,, \quad \nc{\sigma(\mathbf{k})} \isdef \sigma(K)\,,  \quad\nc{\mathcal{F}(\mathbf{k})} \isdef \mathcal{F}(K)\,,\]
If $K$ is a non-degenerate simplex of $\R^m$, we also write $\nc{\abs{\mathbf{k}}} \isdef \abs{K}$. 
Similarly, the subsimplices and facets of a generalized mesh $\mathcal{M}^*$ are
\begin{equation}
\label{eq:defSigmadMstar}
\nc{\sigma_d(\mathcal{M}^*)} \isdef \bigcup_{\mathbf{k} \in \mathbf{K}_{\mathcal{M}^*}} \sigma_d(\mathbf{k})\,,\quad \nc{\sigma(\mathcal{M}^*)} \isdef \bigcup_{0 \leq d\leq n} \sigma_d(\mathcal{M}^*)\,, \quad \nc{\mathcal{F}(\mathcal{M}^*)} \isdef \sigma_{n-1}(\mathcal{M}^*)\,.
\end{equation}
\end{definition}
\end{mdframed}
\begin{mdframed}
\begin{definition}[Relabeling of generalized meshes]
\label{defRelabeling}
The gen-mesh $\mathcal{M}_2^*$ is a {\em relabeling} of the gen-mesh $\mathcal{M}^*_1$ if
\begin{itemize}
\item there is a bijection $\varphi:\mathbf{K}_{\mathcal{M}_1^*} \to \mathbf{K}_{\mathcal{M}_2^*}$,
\item the realizations functions $\mathcal{K}_{\mathcal{M}^*_1}$ and $\mathcal{K}_{\mathcal{M}^*_2}$ satisfy $$\mathcal{K}_{\mathcal{M}^*_1} = \mathcal{K}_{\mathcal{M}^*_2} \circ \varphi\,,$$
\item there holds 
$$\mathbf{k} \overunderset{F}{\mathcal{M}^*_1}{\longleftrightarrow}\mathbf{k}' \iff \varphi(\mathbf{k}) \overunderset{F}{\mathcal{M}^*_2}{\longleftrightarrow}\varphi(\mathbf{k}')\,.$$
\end{itemize}
\end{definition}
\end{mdframed}

\noindent Every non-branching triangulation can also be regarded as a generalized mesh, in the following sense:
\begin{mdframed}
\begin{definition}[Generalized mesh respresenting a non-branching triangulation]
\label{GenMeshofProperMesh}
The generalized mesh $\mathcal{M}^*$ {\em representing} the non-branching triangulation $\mathcal{M}$ is defined as follows:
\begin{itemize}
\item[$\bullet$] the vertex set is the same as that of $\mathcal{M}$, i.e. $\mathcal{V}_{\mathcal{M}^*} \isdef \sigma_0(\mathcal{M})$,
\item[$\bullet$] the elements of $\mathcal{M}^*$ are the elements of $\mathcal{M}$, i.e. $\mathbf{K}_{\mathcal{M}^*}\isdef \mathcal{M}$, 
\item[$\bullet$] the realization of $\mathcal{M}^*$ is the identity map, i.e. $\mathcal{K}_{\mathcal{M}^*}(K) = K\,,\,\, \forall K \in \mathcal{M}$,
\item[$\bullet$] the adjacency graph is defined by 
\[K \overunderset{F}{\mathcal{M}^*}{\longleftrightarrow} K' \iff K \overunderset{F}{\mathcal{M}}{\longleftrightarrow} K' \iff F = K\cap K' \,.\]
\end{itemize}
\end{definition}
\end{mdframed}
It is necessary for $\mathcal{M}$ to be non-branching for the last point in the definition above to be consistent with the requirement (i) of \Cref{defGenMesh}. 

Finally, one can define a notion of orientability for generalized meshes as for usual meshes:
\begin{mdframed}
\begin{definition}[Orientable generalized mesh]
\label{defOrientableGenMesh}
An {\em oriented} gen-mesh is a gen-mesh endowed with an {\em orientation}, i.e. a choice of orientation for each of its elements. For an element $\mathbf{k} \in \mathcal{M}^*$, an orientation of $\mathbf{k}$ is an orientation of its realization $\mathcal{K}_{\mathcal{M}^*}(\mathbf{k})$. We denote the corresponding oriented simplex by 	$[\mathbf{k}]_{\mathcal{M}^*}$. 

The orientation of $\mathcal{M}^*$ is called {\em compatible} if, whenever $\mathbf{k} \overset{F}{\longleftrightarrow}\mathbf{k}'$, the simplices $[\mathbf{k}]_{\mathcal{M}^*}$ and $[\mathbf{k'}]_{\mathcal{M}^*}$ are consistently oriented. A generalized mesh that admits a compatible orientation is called {\em orientable}. 
\end{definition}
\end{mdframed}

\subsection{Example of implementation}
\label{sec:dataStruct}
We represent generalized meshes (in Matlab) using the following data structure:\\

\begin{sampleCode}[plain]{}
\CLASS{GeneralizedMesh}\\
\\
\mbox{\bfseries properties}\\
\begin{array}{llll}
&{\tt vtx\,;} && \texttt{\% List of vertices of size Nvtx}\\
&{\tt  elt\,;} & &\texttt{\% Nelt x (n+1) array with entries in \{1,...,Nvtx\}}\\
&{\tt nei\_elt\,;} && \texttt{\% Nelt x (n+1) array with entries in \{0,...,Nelt\}}\\
&{\tt nei\_fct\,;} && \texttt{\% Nelt x (n+1) array with entries in \{0,...,n+1\}}\\
\end{array}\\
\mbox{\bfseries end}
\end{sampleCode}\\
The attribute $\tt elt$ is used as in the $\tt msh$ class, except this time, the rows of $\tt elt$ are not necessarily mutually distinct. An instance of $\tt GeneralizedMesh$ represents a generalized mesh with elements $\{1\,,\ldots\,,N_{elt}\}$. The $i$-th line of $\tt elt$ encodes the simplex attached to the element $i$. The adjacency graph is encoded by the attributes $\tt nei\_elt$, $\tt nei\_fct$ with the following conventions:
\[i \overset{F}{\longleftrightarrow} j \iff \begin{cases}
(\texttt{nei\_elt[$i$,$\alpha$]},\texttt{nei\_fct[$i$,$\alpha$]} )= (j,\beta)\,, &\\
F_\alpha(i) = F_\beta(j) = F\,,&
\end{cases}\]
\[i \overset{F}{\longleftrightarrow} \perp \iff \begin{cases}
(\texttt{nei\_elt[$i$,$\alpha$]},\texttt{nei\_fct[$i$,$\alpha$]} )= (0,0)\,, &\\
F_\alpha(i) = F\,.&
\end{cases}\]
\[\textup{for all } i,j \in \{1\,,\ldots\,,N_{\rm elt}\}\,.\]
Here, for $\alpha \in \{1\,,\ldots\,,n+1\}$, $F_\alpha(i)$ denotes the facet of the simplex attached to the element $i$, obtained by removing the vertex in position $\alpha$ in the $i$-th line of $\tt elt$. 
\begin{example}
\label{ex:dumpMatlab}
For the mesh $\mathcal{M}^*_1$ of \Cref{example1}, this gives the following arrays:
\begin{footnotesize}
\[\texttt{vtx} = \begin{pmatrix}
\textup{A}\\
\textup{B}\\
\textup{C}\\
\textup{D}\\
\textup{E}\\
\textup{F}\\
\textup{G}\\
\textup{H}\\
\textup{I}\\
\textup{J}\\
\end{pmatrix}\,,\quad\texttt{elt} = \begin{pmatrix}
\textup{1} & \textup{2} & \textup{3}\\
\textup{2} & \textup{3} & \textup{4}\\
\textup{2} & \textup{4} & \textup{5}\\
\textup{2} & \textup{5} & \textup{6}\\
\textup{2} & \textup{6} & \textup{7}\\
\textup{2} & \textup{7} & \textup{1}\\
\textup{1} & \textup{7} & \textup{8}\\
\textup{1} & \textup{8} & \textup{9}\\
\textup{1} & \textup{9} & \textup{10}\\
\textup{1} & \textup{10} & \textup{3}\\
\end{pmatrix}\,, \quad \texttt{nei\_elt} = \begin{pmatrix}
2 & 10 & 0 \\
0 & 3 & 1 \\
0 & 4 & 2 \\
0 & 5 & 3 \\
0 & 6 & 4 \\
7 & 0 & 5 \\
0 & 8 & 6 \\
0 & 9 & 7 \\
0 & 10 & 8 \\
0 & 1 & 9 \\
\end{pmatrix}\,,\,\, \texttt{nei\_fct} = \begin{pmatrix}
3 &2 &0 \\
0 &1 & 1\\
0 &3 & 2\\
0 &3 & 2\\
0 &3 & 2\\ 
3 &0 & 2\\ 
0 &3 & 1\\ 
0 &3 & 2\\ 
0 &3 & 2\\ 
0 &2 & 2\\
\end{pmatrix}\]
\end{footnotesize}
with the conventions above.  \hfill$\triangle$
\end{example}

The $d$-subsimplices of a generalized mesh $\mathcal{M}^*$ can be computed by removing duplicate entries in the list of all subsimplices of all elements. This is done by first sorting the list of subsimplices lexicographically (according to the vertex indices), and then removing duplicates in a second, linear pass. Hence, the number of operations required is proportional to $N_1 \log N_1$, where
\begin{equation}
\label{defN1}
N_1 \isdef \binom{n}{d} \#{\mathbf{K}_{\mathcal{M}^*}}\,.
\end{equation}

One of the class constructors for {\tt GeneralizedMesh} is an implementation of \Cref{GenMeshofProperMesh}. Called on an instance of \texttt{msh} representing a non-branching mesh $\mathcal{M}$, it outputs a generalized mesh $\mathcal{M}^*$ representing $\mathcal{M}$. This involves the computation of the adjacency graph of $\mathcal{M}$, which requires a number of operations proportional to $\#{\mathcal{M}}$, the proportionality constant depending polynomially on $n$. 

For more details, we refer to our Matlab implementation \cite{matlabCode}.

\begin{remark}
\label{rem:storage}
One can see that a {\tt GeneralizedMesh} instance representing a non-branching mesh uses up about three times as much memory as the corresponding {\tt msh} instance.  For meshes that do not differ much from a regular mesh, this cost can easily be reduced by only storing those pairs of elements which are not adjacent although they share a facet. Furthermore, the field \texttt{nei\_fct} need not be stored, as it can easily be deduced from \texttt{nei\_elt}. With those simple changes, storing a normal mesh as a {\tt GeneralizedMesh} incurs no extra memory cost compared to an {\tt msh}. 
\end{remark}

\subsection{Generalized subfacets}

\label{sec:genfacets}
The generalized $d$-subfacets defined in this section play an important role in the remainder of this work. 

Given $S \in \sigma(\mathcal{M}^*)$, the {\em generalized star} of $S$ is defined by
\begin{equation}
\label{eq:defGenStar}
\textup{st}(S,\mathcal{M}^*) \isdef \enstq{\mathbf{k} \in \mathcal{M}^*}{S \in \sigma(\mathbf{k})}\,.
\end{equation}
Let \nc{$\mathcal{G}(S)$} be the graph between the elements of $\textup{st}(S,\mathcal{M}^*)$, with an edge between $\mathbf{k}$ and $\mathbf{k}'$  if $\mathbf{k} \overset{S}{\longleftrightarrow} \mathbf{k}'$ (see \Cref{fig:StarGraph}, for an example where $S$ is a vertex).The connected components of $\mathcal{G}(S)$ define a partition $\gamma_1\,,\ldots\,,\gamma_Q$ of $\textup{st}(S,\mathcal{M}^*)$, and we write $$\nc{\mathcal{C}(S)} \isdef \{\gamma_1\,,\ldots\,,\gamma_Q\}\,.$$
\begin{figure}[H]
\centering
\Includegraphics[width=0.35\textwidth]{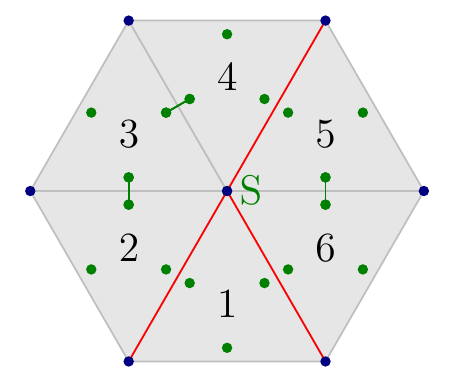} \quad \Includegraphics[width=0.25\textwidth]{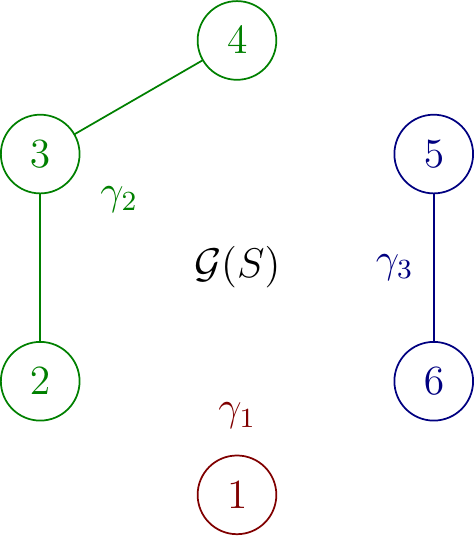}
\caption{Possible configuration of the generalized star of a vertex $S$, with the same conventions as in \Cref{crackDomainExample} (left), and corresponding graph $\mathcal{G}(S)$ (right). In this example, there are three generalized vertices attached to $S$: $(S,\{1\})$, $(S,\{2\,,3\,,4\})$ and $(S,\{5\,,6\})$.}
\label{fig:StarGraph}
\end{figure}
\begin{mdframed}
\begin{definition}[Generalized subfacets]
\label{def:genSubfacet}
For $0 \leq d \leq n$, a {\em generalized $d$-subfacet} is a $d$-simplex $S \in \sigma_d(\mathcal{M}^*)$  labeled by a component $\gamma \in \mathcal{C}(S)$. The set of generalized $d$-subfacets, denoted by $\mathbf{S}_d(\mathcal{M}^*)$, is thus
\begin{equation}
\label{eq:defGenSubfacet}
{\mathbf{S}_d(\mathcal{M}^*)} \isdef \enstq{\mathbf{s} = (S,\gamma)}{S \in \sigma_d(\mathcal{M}^*)\,, \,\, \gamma \in \mathcal{C}(S)}\,.
\end{equation}
We say that a generalized $d$-subfacet $\mathbf{s} = (S,\gamma)$ is {\em attached to} the subsimplex $S$, and we extend the meaning of the realization $\mathcal{K}$ so that  $\nc{\mathcal{K}(\mathbf{s})} \isdef S$.
Also, let us write
\[\nc{\sigma_d(\mathbf{s})} \isdef \sigma_d(S)\,, \quad \nc{\textup{st}(\mathbf{s},\mathcal{M}^*)}\isdef \gamma\,.\]
A generalized $d$-subfacet is called a {\em generalized vertex} and a {\em generalized facet} for $d = 0$ and $d = n-1$, respectively. We again adopt a special notation for the set of generalized facets:
\begin{equation}
\label{eq:defFMstar}
\mathbf{F}(\mathcal{M}^*) \isdef \mathbf{S}_{n-1}(\mathcal{M}^*)\,.
\end{equation}
\end{definition}
\end{mdframed}

For an $n$-dimensional gen-mesh $\mathcal{M}^*$, when $K \in \sigma_n(\mathcal{M}^*)$, the graph $\mathcal{G}(K)$ has no edges. Indeed, the elements in $\textup{st}(K,\mathcal{M}^*)$ can only be adjacent through simplices of dimension $d \leq n-1$, hence, not through $K$. Therefore, 
$$\mathbf{S}_n(\mathcal{M}^*) \simeq \sigma_n(\mathcal{M}^*) = \mathbf{K}_{\mathcal{M}^*}\,,$$ 
and we identify generalized $n$-subfacets with elements. When $\mathcal{M}^*$ represents a regular mesh, then $\mathbf{S}_d(\mathcal{M}^*) \simeq \sigma_d(\mathcal{M}^*)$ for all $0 \leq d \leq n$ because of {Lemma \ref{boisLemma}}.

From the $d$-subsimplices $\sigma_d(\mathcal{M}^*)$, one can compute all the generalized $d$-subfacets of $\mathcal{M}^*$ in a number of operations proportional to $N_2N_3$ where 
$$N_2 = \#{\sigma_d(\mathcal{M}^*)}\,, \quad N_3 = \max_{S \in \sigma_d(\mathcal{M}^*)} \#{\textup{st}(S,\mathcal{M}^*)}\,.$$
This is achieved by a simple graph connected component search, summarized in Algorithms \ref{genfacetsalg} and \ref{auxalg} below:

\begin{algorithm}[H]
\caption{\texttt{GeneralizedSubfacets}$(\mathcal{M}^*,d)$}
\label{genfacetsalg}
\mbox{\bf INPUTS:} Generalized mesh $\mathcal{M}^*$, subfacet dimension $d$. \\
\mbox{\bf RETURNS:} The set of generalized $d$-subfacets of $\mathcal{M}^*$, $\mathbf{S}_d(\mathcal{M}^*)$. \\

$\mathscr{S}_d \GETS \{\}$ \quad {\color{green!50!black}\texttt{\% Initialization}}\\ 
$\mbox{\bf FOR } S \in \sigma_d(\mathcal{M}^*)$ \quad {\color{green!50!black}\texttt{\% Loop over the $d$-subsimplices of $\mathcal{M}^*$}}  \\ 
$\begin{array}{rl}
& l  \GETS \textup{st}(S,\mathcal{M}^*); \quad {\color{green!50!black}\texttt{\% Generalized star of $S$}}\\
&\mbox{\bf WHILE } (l \neq \{\}) \\
&\begin{array}{rl}
& \mbox{Choose } \mathbf{k} \in l; \quad {\color{green!50!black}\texttt{\% Pick element of the star of $S$ not already visited}}\\
& \gamma \GETS \{\};\\
& (\gamma,l) \GETS \texttt{AUX}(\mathcal{M}^*,S,\gamma,l,\mathbf{k}); \quad {\color{green!50!black}\texttt{\% Visit component of $\mathbf{k}$ and store it in $\gamma$.}}\\
& \mathbf{s} \gets (S,\gamma); \quad  {\color{green!50!black}\texttt{\% Create the corresponding new generalized facet}}\\
& \mathscr{S}_d \gets \mathscr{S}_d\cup \{\mathbf{s}\};   {\color{green!50!black}\quad \texttt{\% Append it to the list $\mathscr{S}_d$}}
\end{array}\\
& \mbox{\bf END WHILE}\\
\end{array}$\\

\mbox{\bf RETURN } $\mathscr{S}_d$
\end{algorithm}
\begin{algorithm}[H]
\caption{{\tt AUX}($\mathcal{M}^*,S,\gamma,l,\mathbf{k}$)}
\label{auxalg}
\mbox{\bf INPUTS:} A generalized mesh $\mathcal{M}^*$, a simplex $S \in \sigma(\mathcal{M}^*)$, two disjoint subsets $\gamma$ and $l$ of $\textup{st}(S,\mathcal{M}^*)$, an element $\mathbf{k} \in l$. \\
\mbox{\bf RETURNS} The set $\gamma$ augmented with all elements in the same component as $\mathbf{k}$ in $\mathcal{G}(S)$, and the set $l$ from which those elements have been removed. \\

$\gamma \gets \gamma \cup \{\mathbf{k}\};\quad {\color{green!50!black}\texttt{\% Append element $\mathbf{k}$ to $\gamma$}}\\
l \gets l \setminus \{\mathbf{k}\}; \quad {\color{green!50!black}\texttt{\% Remove element $\mathbf{k}$ from $l$}}\\
\mbox{\bf FOR } F \in \mathcal{F}(\mathbf{k}) \mbox{ such that } S \in \sigma(F)\quad {\color{green!50!black}\texttt{\% Loop over the facets of $\mathbf{k}$ incident to $S$}}\\
\begin{array}{rl}
& \mathbf{k}' \GETS \mathcal{N}(\mathbf{k},F)\quad {\color{green!50!black}\texttt{\% Neighbor of $\mathbf{k}$ through $F$}}\\
& \mbox{\bf IF } \mathbf{k}' \in l\\
& \begin{array}{rl}
& (\gamma,l) \gets \texttt{AUX}(\mathcal{M}^*,S,\gamma,l,\mathbf{k}');\quad {\color{green!50!black}\texttt{\% Recursively visit neighbors of $\mathbf{k'}$ through $S$}}
\end{array}\\
& \mbox{\bf END IF}
\end{array}\\
\mbox{\bf END FOR}\\
\mbox{\bf RETURN } (\gamma,l)$
\end{algorithm}

\begin{mdframed}
\begin{definition}[Incidence and adjacency of generalized subfacets]
\label{defGenInc}
Given $\mathbf{s}$, $\mathbf{s}'\in \mathbf{S}_{d'}(\mathcal{M}^*)$, with $0 \leq d < d' \leq n$, we write $\nc{\mathbf{s} \subset \mathbf{s}'}$ if
\[ \mathcal{K}(\mathbf{s}) \subset \mathcal{K}(\mathbf{s}') \,\,\, \textup{and}\,\,\, \textup{st}(\mathbf{s},\mathcal{M}^*) \supset \textup{st}(\mathbf{s}',\mathcal{M}^*)\,,\]
in whice case we say that $\mathbf{s}$ {\em is contained in} $\mathbf{s}'$.
When $|d - d'| = 1$, we say that $\mathbf{s}$ and $\mathbf{s}'$ are \nc{incident}. Two generalized $d$-subfacets are \nc{adjacent} if they are incident to a common generalized $(d-1)$-subfacet.
\end{definition}  
\end{mdframed}
One can check that a generalized $d$-facet $\mathbf{s}$ is uniquely determined by the set of $(d+1)$ generalized vertices belonging to $\mathbf{s}$, and that for $d = n$, this new notion of adjacency is consistent with the previous one.
\begin{example}
\label{example4}
We consider the generalized mesh $\mathcal{M}^*_3$ represented in \Cref{fig:duplicatedVert} below. It represents a rectangular domain $\Omega$ containing a fracture $\Gamma$ (red edges in the figure). There are $4$ generalized vertices attached to the vertex $O$,
\[\mathbf{v}_1 = (O,\{11,12\})\,,\,\, \mathbf{v}_2 = (O,\{14\})\,,\,\,\mathbf{v}_3 = (O,\{19\})\,,\,\, \textup{ and }\,\, \mathbf{v}_4 = (O,\{21,22\}),\] 
and two generalized edges attached to the edge $\textup{OA}$,
\[\mathbf{e}_1 = (\textup{OA},\{11\}) \quad \mathbf{e}_2 = (\textup{OA},\{14\})\,.\]
In the sense of \Cref{defGenInc}, one has $\mathbf{v}_1 \subset \mathbf{e}_1 \quad\textup{and}\quad\mathbf{v}_2 \subset \mathbf{e}_2$. \hfill $\triangle$
\begin{figure}[H]
\centering
\includegraphics[width=0.70 \textwidth]{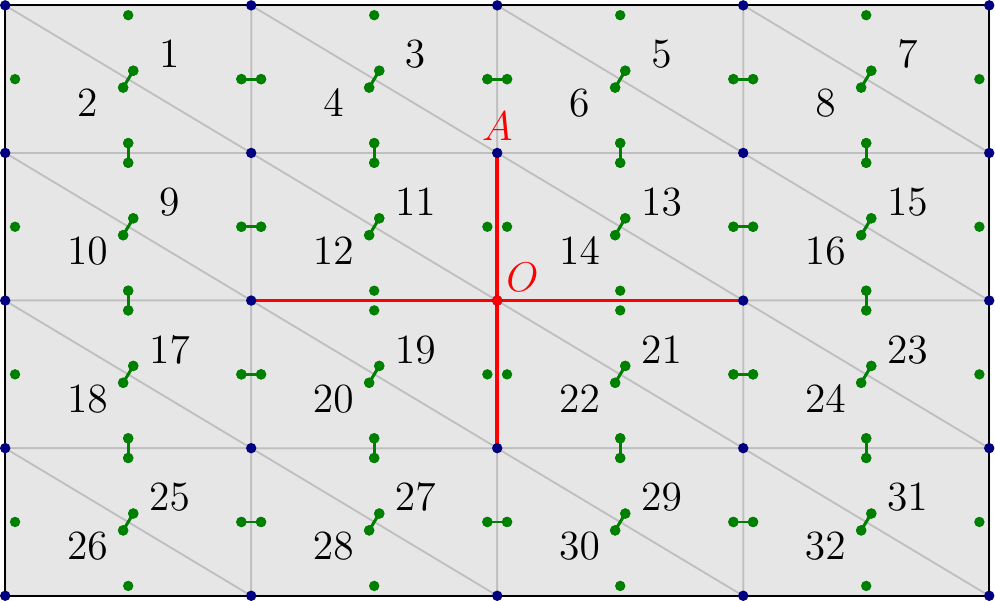}
\caption{A generalized mesh representing a domain with a cross-shaped fracture, with the same conventions as in \Cref{crackDomainExample}.}
\label{fig:duplicatedVert}
\end{figure}
\end{example}

\begin{lemma}
Let $\mathcal{M}^*$ be an $n$-dimensional gen-mesh and suppose $\mathbf{k},\mathbf{k}' \in \mathcal{M}^*$ are adjacent. Then there are $n$ distinct generalized vertices belonging to both $\mathbf{k}$ and $\mathbf{k}'$. The converse is not true, that is, even if two elements share $n$ generalized vertices, they are not necessarily adjacent.
\end{lemma}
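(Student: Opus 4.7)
The proof is a direct unpacking of definitions. Suppose $\mathbf{k}$ and $\mathbf{k}'$ are adjacent through a common facet $F$ of the attached simplices $K = \mathcal{K}(\mathbf{k})$ and $K' = \mathcal{K}(\mathbf{k}')$, and write $F = \{V_1,\ldots,V_n\}$. For each $i$, since $V_i \in \sigma(F)$, the convention extending $\overset{F}{\longleftrightarrow}$ to subsimplices of $F$ gives $\mathbf{k} \overset{V_i}{\longleftrightarrow} \mathbf{k}'$, so $\mathbf{k}$ and $\mathbf{k}'$ lie in the same connected component $\gamma_i \in \mathcal{C}(V_i)$ of the graph $\mathcal{G}(V_i)$. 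Set $\mathbf{v}_i := (V_i,\gamma_i) \in \mathbf{S}_0(\mathcal{M}^*)$. Then $\mathcal{K}(\mathbf{v}_i) = V_i$ is a vertex of both $K$ and $K'$, and $\textup{st}(\mathbf{v}_i,\mathcal{M}^*) = \gamma_i \ni \mathbf{k},\mathbf{k}'$. Recalling the conventions $\textup{st}(\mathbf{k},\mathcal{M}^*) = \{\mathbf{k}\}$ and $\textup{st}(\mathbf{k}',\mathcal{M}^*) = \{\mathbf{k}'\}$ for generalized $n$-subfacets, \Cref{defGenInc} yields $\mathbf{v}_i \subset \mathbf{k}$ and $\mathbf{v}_i \subset \mathbf{k}'$. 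The $n$ generalized vertices $\mathbf{v}_1,\ldots,\mathbf{v}_n$ are pairwise distinct because their attached simplices $V_1,\ldots,V_n$ are distinct.

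\textbf{Counterexample to the converse.} I would take $\mathcal{M}^*_1$ from \Cref{example1} (so $n=2$), together with the elements $\mathbf{k} = 1$ (with $\mathcal{K}(1) = \textup{ABC}$) and $\mathbf{k}'=6$ (with $\mathcal{K}(6) = \textup{ABG}$). These elements share the two vertices $\textup{A}$ and $\textup{B}$. Inspecting the adjacency chain listed for $\mathcal{M}^*_1$, the sub-chain $1\overset{\textup{AC}}{\longleftrightarrow} 10 \overset{\textup{AJ}}{\longleftrightarrow} 9 \overset{\textup{AI}}{\longleftrightarrow} 8 \overset{\textup{AH}}{\longleftrightarrow} 7 \overset{\textup{AG}}{\longleftrightarrow} 6$ places $1$ and $6$ in the same connected component $\gamma_A$ of $\mathcal{G}(\textup{A})$, and similarly $1\overset{\textup{BC}}{\longleftrightarrow} 2\overset{\textup{BD}}{\longleftrightarrow} 3 \overset{\textup{BE}}{\longleftrightarrow} 4 \overset{\textup{BF}}{\longleftrightarrow} 5 \overset{\textup{BG}}{\longleftrightarrow} 6$ places them in a common component $\gamma_B$ of $\mathcal{G}(\textup{B})$. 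Thus the two generalized vertices $(\textup{A},\gamma_A)$ and $(\textup{B},\gamma_B)$ both belong to $1$ and to $6$, giving $n = 2$ shared generalized vertices. Yet $1$ and $6$ are not adjacent, because the split facets $(\textup{AB},1)$ and $(\textup{AB},6)$ are precisely the disconnected pair realizing the crack.

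\textbf{Expected difficulty.} There is essentially no obstacle: the positive part is pure definition chasing, the subtlest point being the extension of the adjacency notation from facets to their subsimplices. The only creative step is producing a small, transparent counterexample for the converse, for which the crack in \Cref{example1} is tailor-made: two elements flanking a fracture share all the vertices of the fractured facet yet fail to be connected through that very facet in the adjacency graph.
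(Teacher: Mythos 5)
Your proof is correct and follows essentially the same route as the paper: the forward implication is the identical definition-unpacking argument (adjacency through $F$ extends to each vertex $V_i$ of $F$, yielding a common component $\gamma_i \in \mathcal{C}(V_i)$ and hence a shared generalized vertex $(V_i,\gamma_i)$), and the counterexample is the same pair of elements $1$ and $6$ in $\mathcal{M}^*_1$ from \Cref{example1}. The only difference is that you spell out the connecting chains in $\mathcal{G}(\textup{A})$ and $\mathcal{G}(\textup{B})$ explicitly, which the paper merely asserts.
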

\begin{proof}
Let $F \in \mathbf{F}(\mathcal{M}^*)$ be such that $\mathbf{k} \overset{F}{\longleftrightarrow} \mathbf{k}'$.  
Write $F = \{V_1,V_2,\ldots, V_n\}$. For each $k \in \{1\,,\ldots\,,n\}$, since $V_k \in \sigma(F)$ one has
\[\mathbf{k} \overset{V_k}{\longleftrightarrow} \mathbf{k}'\,.\]
Hence, there is a component $\gamma_k \in \mathcal{C}(V_k)$ containing both $\mathbf{k}$ and $\mathbf{k}'$. Set $\mathbf{v}_k \isdef (V_k,\gamma_k) \in \mathbf{S}_0(\mathcal{M}^*)$. Then, the generalized vertices $\mathbf{v}_1\,,\ldots\,,\mathbf{v}_n$ all belong to both $\mathbf{k}$ and $\mathbf{k}'$. 	

For the converse statement, a counter-example is provided by the generalized mesh $\mathcal{M}^*_1$ of \Cref{example1}. The elements $1$ and $6$ share two generalized vertices (attached to $\textup{A}$ and $\textup{B}$, respectively), but they are not adjacent. 
\end{proof}
\begin{remark}
There is a natural concept of a mesh refinement for generalized meshes (implemented in \cite{matlabCode}). In many cases, it seems that given generalized mesh $\mathcal{M}^*$, there exists a refinement $\mathcal{M}^*_{\rm ref}$ of $\mathcal{M}^*$ in which two elements are adjacent {\em if and only if} they share $n$ distinct generalized vertices. When this happens, the mesh $\mathcal{M}^*_{\rm ref}$, and therefore the geometry represented by $\mathcal{M}^*$, can be equivalently described by a non-branching triangulation $\mathcal{M}_{\rm eq}$, defined by
\[\mathcal{V} \isdef \mathbf{S}_0(\mathcal{M}^*_{\rm ref}) \]
\[\mathcal{V}^{n+1} \ni \{\mathbf{v}_1\,,\ldots\,,\mathbf{v}_{n+1}\} \in \mathcal{M}_{\rm eq} \iff \exists \mathbf{k} \in \mathcal{M}^*_{\rm ref} \textup{ such that } \mathbf{v}_1\,,\ldots\,,\mathbf{v}_{n+1} \textup{ belong to } \mathbf{k}\,.\]
One can ask if this is a general fact, that is, whether every generalized mesh $\mathcal{M}^*$ ``reduces" to a triangulation after a suitable refinement. We have not investigated this question yet. 
\end{remark}

\section{Boundary of a generalized mesh}

\label{sec:genbound}
We now define the boundary $\partial^* \mathcal{M}^*$ of a generalized mesh $\mathcal{M}^*$. Recall the definition of the set of split facets $\mathbb{F}(\mathcal{M}^*)$ from Eq.~\eqref{eq:defSplitFacet} and the neighbor function $\mathcal{N}$ from Eq.~\eqref{def:Nmstar}. In what follows, $\mathbf{F}_b(\mathcal{M}^*)$ denotes the set of {\em boundary split facets} of $\mathcal{M}^*$, defined by
\begin{equation}
\label{eq:defFbMstar}
\mathbf{F}_{b}(\mathcal{M}^*) \isdef \enstq{(F,\mathbf{k}) \in \mathbb{F}(\mathcal{M}^*)}{ \mathcal{N}(\mathbf{k},F) = \perp } \,,
\end{equation}
that is, the set of nodes of degree $0$ in the adjacency graph $\mathcal{G}_{\mathcal{M}^*}$.

By definition, $\mathbf{F}_b(\mathcal{M}^*)$ will be the set of elements of $\partial^* \mathcal{M}^*$. To specify the adjacency relation between those elements, we use the following idea, illustrated in \Cref{fig:genBoundAdj}. Let $\mathbf{f}, \mathbf{f}' \in \mathbf{F}_b(\mathcal{M}^*)$, with
$$\mathbf{f} = (F,\mathbf{k})\,, \quad \mathbf{f}' = (F',\mathbf{k}')$$ 
such that $F$ and $F'$ share a common facet $S$. Then we make $\mathbf{f}$ and $\mathbf{f'}$ adjacent through $S$ if $\mathbf{k}$ and $\mathbf{k'}$ can be linked by a chain of adjacent elements circling around $S$. 
\begin{figure}[H]
\centering
\includegraphics[width=0.3\textwidth]{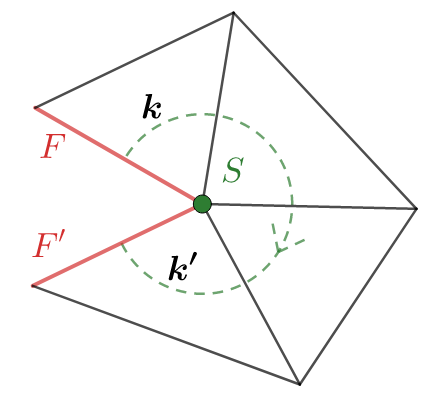}
\caption{Sketch of the idea for the definition of the adjacency relation in the generalized boundary.}
\label{fig:genBoundAdj}
\end{figure}

In the rest of this section, we formalize this idea precisely and prove that it extends the concept of the boundary of a regular mesh. We also show that the boundary of an orientable gen-mesh is orientable. 

The next lemma defines precisely the chain of elements discussed above and establishes its uniqueness.
\begin{lemma}
\label{lemChain}
Let $\mathcal{M}^*$ be an $n$-dimensional generalized mesh, $n\geq 2$, and let $(F,\mathbf{k}) \in \mathbf{F}_b(\mathcal{M}^*)$.
For each $S \in \mathcal{F}(F)$, there exists a unique sequence $\{\mathbf{k}_q\}_{1 \leq q \leq Q} \subset \mathbf{K}_{\mathcal{M}^*}$ of distinct elements, and a unique sequence $\{F_q\}_{0 \leq q \leq Q} \subset \mathcal{F}(\mathcal{M}^*)$ of distinct facets, all containing $S$, such that $\mathbf{k} = \mathbf{k}_{1}$, $F = F_0$, and
\[\perp \,\overset{F_0}{\longleftrightarrow} \mathbf{k}_{1} \overset{F_1}{\longleftrightarrow} \mathbf{k}_{2} \overset{F_2}{\longleftrightarrow}\, \ldots\, \overset{F_{Q-2}}{\longleftrightarrow} \mathbf{k}_{Q-1} \overset{F_{Q-1}}{\longleftrightarrow}  \mathbf{k}_{Q} \overset{F_{Q}}{\longleftrightarrow}\, \perp\,.\]
\end{lemma}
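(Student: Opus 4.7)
I would construct the chain inductively from the starting split facet $(F_0,\mathbf{k}_1)=(F,\mathbf{k})$, using the elementary combinatorial fact that any $n$-simplex $K$ containing an $(n-2)$-subsimplex $S$ has exactly two facets containing $S$: writing $\{v,w\}=K\setminus S$, they are $S\cup\{v\}$ and $S\cup\{w\}$. At step $q\geq 1$, assuming $F_{q-1}$ is a facet of $\mathbf{k}_q$ containing $S$, define $F_q$ to be the other such facet of $\mathbf{k}_q$. If $\mathcal{N}(\mathbf{k}_q,F_q)=\perp$, stop and set $Q:=q$; otherwise set $\mathbf{k}_{q+1}:=\mathcal{N}(\mathbf{k}_q,F_q)$, which by axiom~(ii) of \Cref{defGenMesh} satisfies $S\subset F_q\subset \mathcal{K}(\mathbf{k}_{q+1})$, so that $\mathbf{k}_{q+1}\in\textup{st}(S,\mathcal{M}^*)$ and the induction continues.

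\textbf{Graph framework.} To control termination and distinctness simultaneously, I would introduce an auxiliary graph $H=H(S)$ whose vertex set is
\[
V(H)\isdef\left\{(F',\mathbf{k}')\in \mathbb{F}(\mathcal{M}^*)\,:\,\mathbf{k}'\in\textup{st}(S,\mathcal{M}^*)\text{ and } S\subset F'\right\},
\]
with two kinds of edges: \emph{element edges}, joining $(F',\mathbf{k}')$ to $(F'',\mathbf{k}')$ where $F',F''$ are the two facets of $\mathbf{k}'$ containing $S$; and \emph{facet edges} inherited from $\mathcal{G}_{\mathcal{M}^*}$. Each vertex of $H$ has exactly one incident element edge and, by axiom~(i) of \Cref{defGenMesh}, at most one incident facet edge, so its degree is $1$ or $2$, with degree $1$ corresponding exactly to the boundary split facets containing $S$. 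Hence $H$ decomposes into disjoint simple paths and cycles. The walk produced by the inductive construction alternates element and facet edges, starts at the degree-$1$ vertex $(F_0,\mathbf{k}_1)=(F,\mathbf{k})$, must therefore lie in a path component of $H$, and must terminate at the other endpoint of that path, itself a boundary split facet. This simultaneously yields existence, termination, and the fact that the walk cannot loop.

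\textbf{Uniqueness and distinctness of elements.} At each step $F_q$ is the unique facet of $\mathbf{k}_q$ containing $S$ different from $F_{q-1}$, and $\mathbf{k}_{q+1}=\mathcal{N}(\mathbf{k}_q,F_q)$ is uniquely determined by the neighbor function; this pins down the chain. Distinctness of the $\mathbf{k}_q$'s follows from the simple-path structure of $H$: if $\mathbf{k}_p=\mathbf{k}_{p'}$ with $p\neq p'$, then each of these indices contributes up to two vertices to the walk with that second coordinate, but at most two such vertices exist in $H$ (one per facet of $K_p$ containing $S$); the walk would have to revisit a vertex, contradicting simplicity.

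\textbf{Main obstacle.} The most delicate part will be the distinctness of the facets $F_q$ as elements of $\mathcal{F}(\mathcal{M}^*)$: the simple-path argument in $H$ directly yields distinctness of the split facets $(F_q,\mathbf{k}_q)$ encountered along the walk, but not immediately of the $F_q$'s themselves, since distinct split facets may share the same first coordinate. I expect this to follow from a careful case analysis leveraging axiom~(i) and the fact that consecutive $F_{q-1},F_q$ are distinct by construction; an equality $F_q=F_{q'}$ for $|q-q'|\geq 2$ should yield a forbidden incidence pattern between the split facets at $F_q$ and the degree bound on $\mathcal{G}_{\mathcal{M}^*}$.
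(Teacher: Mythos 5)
Your chain construction coincides with the paper's, but your justification of termination and element-distinctness takes a genuinely different route. The paper argues by direct induction: supposing $\mathbf{k}_{j+1}=\mathbf{k}_a$ with $a\le j$, it distinguishes the cases $a<j-1$ and $a=j-1$ and shows in both, using that an element has exactly two facets containing $S$ and that each split facet has at most one neighbour, that some earlier element would already be repeated, contradicting the induction hypothesis; finiteness of $\mathbf{K}_{\mathcal{M}^*}$ then forces termination. Your graph $H(S)$ packages the same two facts into the statement that $H$ has maximum degree $2$, hence decomposes into simple paths and cycles, and that the chain is the traversal of the path component of the degree-one vertex $(F,\mathbf{k})$; this yields existence, termination, uniqueness and distinctness of the visited split facets in one stroke, and your two-vertices-per-element counting correctly converts this into distinctness of the $\mathbf{k}_q$. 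Two points worth making explicit: a facet edge issuing from $(F',\mathbf{k}')\in V(H)$ lands at some $(F',\mathbf{k}'')$ with $S\subset F'\subset\mathcal{K}(\mathbf{k}'')$, so $H$ really is a graph on $V(H)$; and an element edge and a facet edge can never coincide (one preserves the element, the other the facet), so $H$ is simple and the path/cycle decomposition applies.

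Concerning your ``main obstacle'': your caution is warranted, and the case analysis you anticipate cannot succeed at this level of generality, because distinctness of the facets $F_q$ (as opposed to the split facets $(F_q,\mathbf{k}_q)$) is not a consequence of the axioms of \Cref{defGenMesh}. For instance, with $n=2$ and $S=\{A\}$, take four elements realized as $ABC$, $ABD$, $ADC$, $ACE$, with adjacencies $(AB,1)\leftrightarrow(AB,2)$, $(AD,2)\leftrightarrow(AD,3)$, $(AC,3)\leftrightarrow(AC,4)$ and $(AC,1)$ of degree zero; all axioms hold (overlapping realizations are explicitly permitted), and the chain issued from $(AC,1)$ around $A$ reads $AC,\,AB,\,AD,\,AC,\,AE$, so $F_0=F_3$ while the four elements are distinct. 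The paper's own proof has the same gap: it establishes distinctness of the $\mathbf{k}_q$ only and never returns to the $F_q$. So the honest conclusion is not that you are missing an argument, but that facet-distinctness requires an additional hypothesis --- for example that every simplex of $\mathcal{F}(\mathcal{M}^*)$ is a facet of the realizations of at most two elements, which holds for the fractured meshes of \Cref{def:fracturedMesh} where the lemma is actually applied (there a repeated $F_q$ would force two of the $\mathbf{k}_q$ to coincide, contradicting what you have already proved).
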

\begin{proof}
Let $\mathbf{k}_{1} \isdef \mathbf{k}$, $F_0 \isdef F$, and let $F_1$ be the unique facet of $\mathbf{k}$, distinct from $F$, containing $S$. If $\mathcal{N}(\mathbf{k},F_1) = \,\perp$, then
\[\perp  \overset{F_0}{\longleftrightarrow}\mathbf{k}_{1} \overset{F_1}{\longleftrightarrow} \,\perp\,,\]
and there is nothing left to prove.

Otherwise, let $\mathbf{k}_{2} \isdef \mathcal{N}(\mathbf{k}_{1},F_1)$, and let the  construction be continued recursively in the following way: $\mathbf{k}_{i}$ and $F_{i-1}$ being defined, let $F_i$ be the facet of $\mathbf{k}_{i}$, distinct from $F_{i-1}$, and containing $S$. If 
\[\mathcal{N}(\mathbf{k}_{i},F_i) = \,\perp\,,\]
then stop with $Q = i$. Otherwise, let
\[\mathbf{k}_{i+1} = \mathcal{N}(\mathbf{k}_{i},F_i)\,.\] 

Let us show by induction that for each $j$ such that $\mathbf{k}_j$ is defined, the elements $\mathbf{k}_1\,,\ldots\,,\mathbf{k}_j$ are all distinct. For $j = 2$, this follows from the requirement (ii) of \Cref{defGenMesh}, i.e. that
\begin{equation}
\label{property}
\mathbf{k} \overset{F}{\longleftrightarrow} \mathbf{k}' \implies \mathbf{k} \neq \mathbf{k}'\,.
\end{equation}
Next, assuming that it is true for some $j \geq 2$, and if $\mathbf{k}_{j+1}$ is defined, let us show by contradiction that $\mathbf{k}_{j+1}\notin \{\mathbf{k}_i\}_{1 \leq i \leq j}$. To this end, we assume that there exists $a < j+1$ such that $\mathbf{k}_a= \mathbf{k}_{j+1}$. The situation is then summarized by the diagram below.
\[\begin{array}{ccccc}
&&\mathbf{k}_{a}  &\overset{F_{a}}{\longleftrightarrow}& \mathbf{k}_{a+1}\\[0.5ex]	
&&\verteq&&
\\[-2.2ex] \mathbf{k}_{j}&\overset{F_{j}}{\longleftrightarrow}&\mathbf{k}_{j+1}&&  
\end{array}\]
Note that $\mathbf{k}_{a+1}\neq \mathbf{k}_{j+1}$,  again by the property \eqref{property} above. Consequently, $a \neq j$, so either $a < j-1$ or $a = j-1$. 

On the one hand, if we assume that $a < j - 1$, then  $\mathbf{k}_{a+1}$ and $\mathbf{k}_{j}$ are two distinct neighbors of $\mathbf{k}_{a}$ through $F_a$ and $F_{j}$, respectively. Hence, $F_j \neq F_a$ so that $F_{a-1} = F_j$ and therefore
\[\mathcal{N}(\mathbf{k}_{a},F_{a-1}) = \mathbf{k}_{j} \neq \,\perp\,.\]
This implies that $a \geq 2$, and $\mathbf{k}_j = \mathbf{k}_{a - 1}$, which is in contradiction with the induction hypothesis.  

On the other hand, assume that $a = j - 1$. We now face the situation represented in the diagram below.
\[\mathbf{k}_{j-1} \overset{F_{j-1}}{\longleftrightarrow} \mathbf{k}_{j} \overset{F_{j}}{\longleftrightarrow} \mathbf{k}_{j+1} = \mathbf{k}_{j-1}\]

By definition of $F_j$, we have $F_j \neq F_{j-1}$, and because of the situation above, both are faces of the simplex attached to $\mathbf{k}_{j-1}$. Hence, we must have $F_{j-2} = F_j$, and therefore
\[\mathcal{N}(\mathbf{k}_{j-1},F_{j-2}) = \mathbf{k}_{j} \neq \,\perp\,.\]
This means that $\mathbf{k}_{j-2}$ is defined and equal to $\mathbf{k}_{j}$, leading to a contradiction also in this case. The existence of $a$ is therefore contradictory, concluding the induction. 

Having established that the sequence of elements generated by this process has no repetition, we conclude that it must be finite, since $\mathcal{M}^*$ only has a finite number of elements. With this, the existence of the chain is proved. 
The fact that the chain is unique follows immediately from the axiom (i) of the adjacency graph in \Cref{defGenMesh}.
\end{proof}

This shows that for each boundary split facet $\mathbf{f} \isdef (F,\mathbf{k}) \in \mathbf{F}_b(\mathcal{M}^*)$, and for each facet $S$ of $F$, there is a ``chain of elements" starting from this boundary split facet and circling around $S$. The opposite end of this chain, $\mathbf{f}' \isdef (F_Q,\mathbf{k}_Q)$ is again a boundary split facet, and we deem it the neighbor of $\mathbf{f}$ through $S$. We write $\mathcal{N}_{b}(\mathbf{f},S) \isdef \mathbf{f}'$.
\begin{corollary}
\label{corBound}
The function $\mathcal{N}_b$ satisfies
\[\forall \mathbf{f} \in \mathbf{F}_{b}(\mathcal{M}^*),\, \forall S \in \mathcal{F}(F)\,, \quad \mathcal{N}_b(\mathbf{f},S) \neq \mathbf{f} \quad \textup{ and }\]
\[\mathcal{N}_b(\mathbf{f},S) = \mathbf{f'}\iff  \mathcal{N}_b(\mathbf{f}',S) = \mathbf{f}\,.\]
\end{corollary}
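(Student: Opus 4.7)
The plan is to deduce both assertions directly from Lemma \ref{lemChain}, using only its uniqueness clause and the symmetry of the adjacency relation. Write $\mathbf{f} = (F,\mathbf{k})$ throughout.

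First I would apply Lemma \ref{lemChain} to the pair $(\mathbf{f},S)$, obtaining sequences of distinct elements $\{\mathbf{k}_q\}_{1\leq q \leq Q}$ and distinct facets $\{F_q\}_{0\leq q \leq Q}$, all containing $S$, with $\mathbf{k}_1 = \mathbf{k}$, $F_0 = F$, and
$$\perp \overset{F_0}{\longleftrightarrow} \mathbf{k}_1 \overset{F_1}{\longleftrightarrow} \cdots \overset{F_{Q-1}}{\longleftrightarrow} \mathbf{k}_Q \overset{F_Q}{\longleftrightarrow}\, \perp.$$
By definition, $\mathcal{N}_b(\mathbf{f},S) = (F_Q,\mathbf{k}_Q) =: \mathbf{f}'$. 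Since the facets $F_0,\ldots,F_Q$ are pairwise distinct, $F_Q \neq F_0 = F$, hence the split facet $\mathbf{f}'$ differs from $\mathbf{f}$ already in its first component. This proves $\mathcal{N}_b(\mathbf{f},S) \neq \mathbf{f}$.

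For the involution property, I would define the reversed sequences $\tilde{\mathbf{k}}_q \isdef \mathbf{k}_{Q+1-q}$ for $1 \leq q \leq Q$ and $\tilde{F}_q \isdef F_{Q-q}$ for $0 \leq q \leq Q$. Because the adjacency $\overset{\cdot}{\longleftrightarrow}$ is a symmetric relation on elements, the reversed diagram
$$\perp \overset{\tilde{F}_0}{\longleftrightarrow} \tilde{\mathbf{k}}_1 \overset{\tilde{F}_1}{\longleftrightarrow} \cdots \overset{\tilde{F}_{Q-1}}{\longleftrightarrow} \tilde{\mathbf{k}}_Q \overset{\tilde{F}_Q}{\longleftrightarrow}\, \perp$$
is again a chain of distinct elements and distinct facets, all containing $S$, and its starting data $(\tilde{\mathbf{k}}_1,\tilde{F}_0) = (\mathbf{k}_Q,F_Q)$ precisely matches the boundary split facet $\mathbf{f}'$. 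By the uniqueness clause of Lemma \ref{lemChain} applied to $(\mathbf{f}',S)$, this reversed chain is the one the lemma associates with $(\mathbf{f}',S)$. Reading off its terminal split facet yields $\mathcal{N}_b(\mathbf{f}',S) = (\tilde{F}_Q,\tilde{\mathbf{k}}_Q) = (F_0,\mathbf{k}_1) = \mathbf{f}$. The converse implication follows by exchanging the roles of $\mathbf{f}$ and $\mathbf{f}'$ and repeating the argument.

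I do not anticipate a genuine obstacle: all the combinatorial work, in particular the careful exclusion of repetitions in the chain, has already been carried out inside Lemma \ref{lemChain}. The only delicate point is to verify that reversing the chain preserves every hypothesis of the lemma (distinctness of elements and of facets, incidence with $S$, the two terminal $\perp$-conditions) so that uniqueness can be legitimately invoked at the opposite end; everything else is a direct readout.
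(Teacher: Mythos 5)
Your argument is correct and is precisely the reasoning the paper leaves implicit: the corollary is stated without proof as an immediate consequence of Lemma~\ref{lemChain}, and your reversal-plus-uniqueness argument (together with the observation that $F_Q \neq F_0$ even when $Q=1$, so the first claim must rest on distinctness of the \emph{facets} rather than of the elements) is the intended justification. Nothing further is needed.
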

\begin{mdframed}
\begin{definition}[Generalized boundary]
\label{defGenBound}
Given an $n$-dimensional generalized mesh $\mathcal{M}^*$, with $n \geq 1$, the {\em generalized boundary} (or simply boundary) of $\mathcal{M}^*$ is the $(n-1)$-dimensional generalized mesh $\partial^*\mathcal{M}^*$ with
\begin{itemize}
\item the vertex set $\mathcal{V}_{\partial \mathcal{M}^*} \isdef \mathcal{V}_{\mathcal{M}^*}$,
\item the elements $\mathbf{K}_{\partial^* \mathcal{M}^*} \isdef \mathbf{F}_b(\mathcal{M}^*)$,
\item the realization $\mathcal{K}_{\partial^*\mathcal{M}^*}$, defined by $\mathcal{K}_{\partial^* \mathcal{M}^*}(\mathbf{f}) \isdef F$ when $\mathbf{f} = (F,\mathbf{k})$. 
\item If $n \geq 2$, the adjacency graph $\mathcal{G}_{\partial^* \mathcal{M}^*}$ defined by 
\[\mathbf{f} \overunderset{S}{\partial^* \mathcal{M}^*}{\longleftrightarrow} \mathbf{f}' \iff \mathcal{N}_b(\mathbf{f},S) = \mathbf{f}'\,.\]
\end{itemize}
If $\mathcal{M}^*$ is a $0$-dimensional generalized mesh, we set $\partial^* \mathcal{M}^* \isdef \emptyset$. 
\end{definition}
\end{mdframed}

This definition respects the axioms of generalized meshes by \Cref{corBound}. Moreover, it is clear that for any gen-mesh $\mathcal{M}^*$, the gen-mesh $\partial \partial \mathcal{M}^*$ is empty.

\begin{example}
Consider again the generalized mesh $\mathcal{M}^*_3$ represented in \Cref{fig:duplicatedVert}. Its generalized boundary is represented in \Cref{fig:dM3star} below, where the inner component of the boundary (in red) has been ``inflated" to help visualizing the adjacency structure.  \hfill$\triangle$
\begin{figure}[H]
\centering
\includegraphics[width=0.47\textwidth]{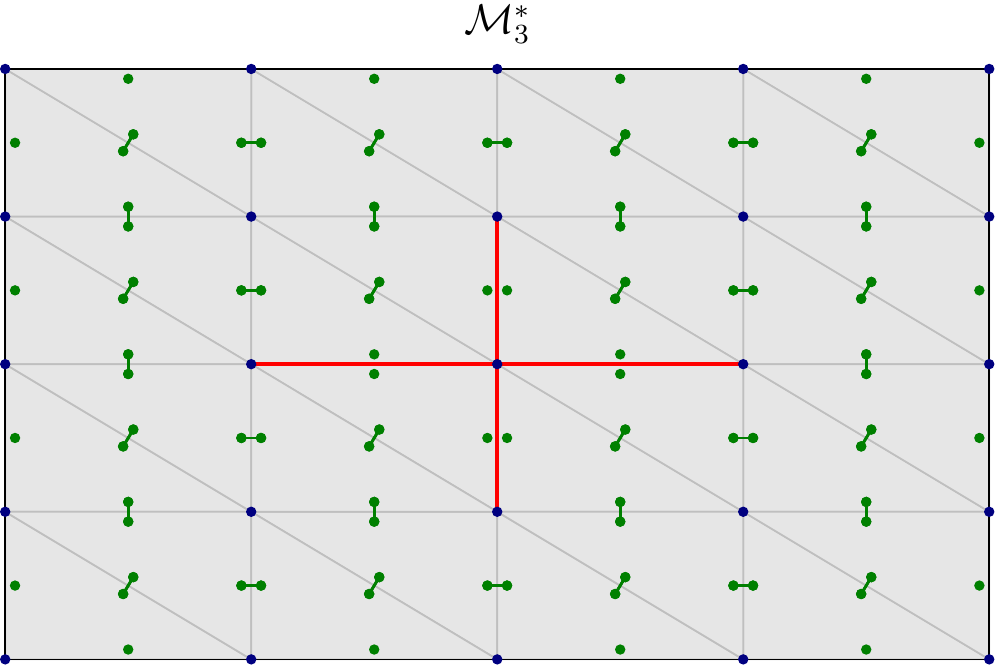} \hfill
\includegraphics[width=0.47\textwidth]{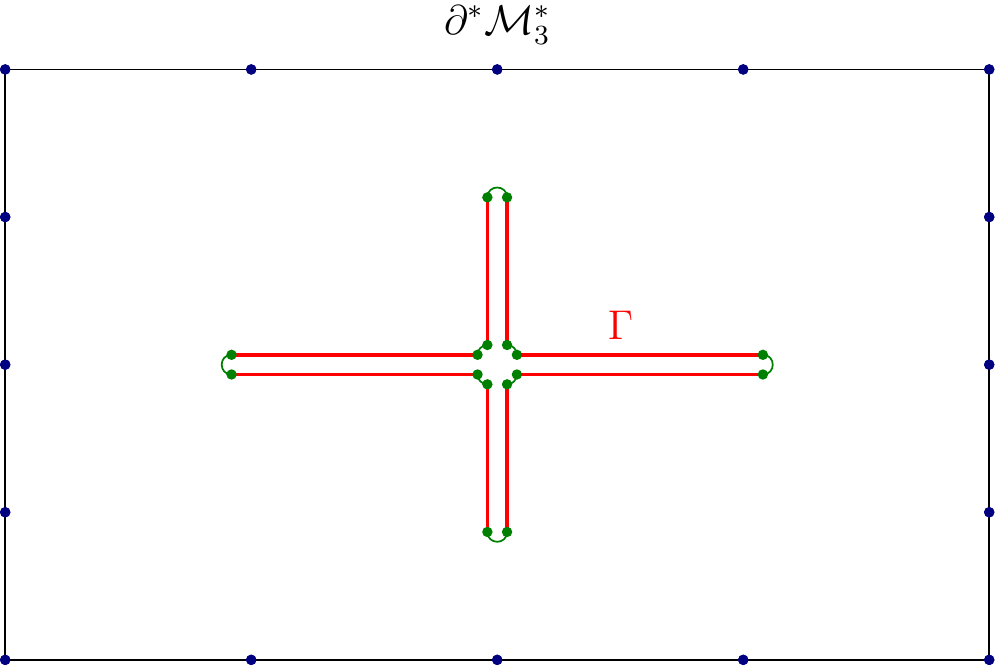}
\caption{Generalized mesh $\mathcal{M}_3^*$ (from \Cref{example4}) and its generalized boundary $\partial^*\mathcal{M}^*_3$. }
\label{fig:dM3star}
\end{figure}
\end{example}
Now, we prove that the previous definition suitably generalizes the notion of boundary for regular meshes.\footnote{We cannot formulate \Cref{lemGenBound} in general for non-branching triangulations, since they can have branching boundaries -- for which there is no corresponding generalized mesh, see the remark below \Cref{GenMeshofProperMesh}.} For the terminology used in the next lemma, recall Definitions \ref{defRelabeling} and \ref{GenMeshofProperMesh}:
\begin{lemma}
\label{lemGenBound}
Let $\mathcal{M}$ be a regular mesh, and let $\mathcal{M}^*$ (resp. $(\partial \mathcal{M})^*$) be the generalized mesh representing $\mathcal{M}$ (resp. $\partial \mathcal{M}$). Then, the meshes 
$\partial^* \mathcal{M}^*$ and $(\partial \mathcal{M})^*$ are equal, up to a relabeling.
\end{lemma}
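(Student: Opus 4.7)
The plan is to exhibit the relabeling $\varphi: \mathbf{K}_{\partial^*\mathcal{M}^*} \to \mathbf{K}_{(\partial\mathcal{M})^*}$ defined by $\varphi((F,K)) \isdef F$ and verify the three conditions of \Cref{defRelabeling}. Since $\mathcal{M}$ is regular, every facet is incident to at most two elements, so for each $F \in \partial\mathcal{M}$ there is a unique $K \in \mathcal{M}$ with $F \in \mathcal{F}(K)$. A boundary split facet $(F,K) \in \mathbf{F}_b(\mathcal{M}^*)$ satisfies $\mathcal{N}(K,F) = \perp$ precisely when $F \in \partial\mathcal{M}$, so $\varphi$ is a bijection. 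The realization compatibility $\mathcal{K}_{\partial^*\mathcal{M}^*}((F,K)) = F = \mathcal{K}_{(\partial\mathcal{M})^*}(\varphi((F,K)))$ is immediate.

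The substantive task is to match adjacencies when $n \geq 2$ (the case $n=1$ is vacuous since both boundaries are $0$-dimensional and carry no adjacency graph). Given $(F,K), (F',K') \in \mathbf{F}_b(\mathcal{M}^*)$ with $F \neq F'$ and $S \in \mathcal{F}(F) \cap \mathcal{F}(F')$, I would establish
\[
(F,K) \overunderset{S}{\partial^*\mathcal{M}^*}{\longleftrightarrow} (F',K') \iff F \cap F' = S\,,
\]
the right-hand side being precisely the adjacency of $F$ and $F'$ through $S$ in $(\partial\mathcal{M})^*$, by \Cref{GenMeshofProperMesh}. The forward implication is straightforward: the chain from \Cref{lemChain} underlying the adjacency terminates at $(F',K')$ with $F'$ containing $S$, and two distinct $(n-1)$-simplices both containing the $(n-2)$-simplex $S$ can only meet in $S$.

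For the converse, assume $F \cap F' = S$. The chain from \Cref{lemChain} starting at $(F,K)$ and circling around $S$ terminates at some $(F_Q, K_Q) \in \mathbf{F}_b(\mathcal{M}^*)$, where $F_Q$ is a boundary facet of $\mathcal{M}$ containing $S$, with $F_Q \neq F$ since all the $F_q$ are distinct. The main obstacle is to identify $F_Q$ with $F'$. This uses regularity of $\mathcal{M}$ in two ways: every facet of $\mathcal{M}$ is incident to at most two elements (so $\partial\mathcal{M}$ is itself a regular $(n-1)$-dimensional mesh), and $|\partial\mathcal{M}|$ is a closed manifold, being the manifold boundary of $|\mathcal{M}|$, which forces $\partial\partial\mathcal{M} = \emptyset$. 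Combining these facts, every $(n-2)$-subsimplex of $\partial\mathcal{M}$, and in particular $S$, is incident to exactly two boundary facets of $\mathcal{M}$. Since $F$ and $F'$ are distinct boundary facets containing $S$ by assumption, they must be precisely the two. Hence $F_Q = F'$, and by uniqueness of the incident element to a boundary facet, $K_Q = K'$. This yields $(F,K) \overunderset{S}{\partial^*\mathcal{M}^*}{\longleftrightarrow} (F',K')$ and completes the verification of the relabeling. The delicate point is channelling the regularity hypothesis into the statement that $S$ has exactly two incident boundary facets, which is what pins down the terminal endpoint of the chain.
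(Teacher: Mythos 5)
Your proof is correct, and while the bookkeeping (the bijection $\varphi$, the realization compatibility) is the same as in the paper, the substantive step --- matching the adjacency graphs --- is argued by a genuinely different route. The paper works ``constructively'': given $F \cap F' = S$, it invokes \Cref{boisLemma} (face-connectedness of the star of $S$ in a regular mesh) to build an explicit chain of adjacent elements from $K$ to $K'$ circling $S$, and then concludes $\mathcal{N}_b((F,K),S) = (F',K')$ by the uniqueness part of \Cref{lemChain}. You instead take the canonical chain of \Cref{lemChain} as a black box and pin down its endpoint by a counting argument: since $\mathcal{M}$ regular forces $\partial\mathcal{M}$ to be a regular mesh with $\partial\partial\mathcal{M} = \emptyset$, the $(n-2)$-simplex $S$ is incident to \emph{exactly} two boundary facets, and the terminal facet $F_Q$, being a boundary facet containing $S$ and distinct from $F_0 = F$, has nowhere to go but $F'$. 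Both arguments channel regularity, but through different consequences of it --- star connectivity versus the two-incidence property of $\partial\mathcal{M}$. Your version buys a shorter argument that avoids reconciling the chain from \Cref{boisLemma} (which a priori need not have distinct elements or facets) with the canonical one from \Cref{lemChain}; the paper's version is more robust in the sense that it identifies \emph{which} chain realizes the adjacency rather than merely where it must end, which is the form of the argument reused later in the proof of \Cref{theoremInflation}. One small presentational point: it is worth stating explicitly (as you implicitly use) that the two-incidence property of $S$ in $\partial\mathcal{M}$ is also exactly what makes $(\partial\mathcal{M})^*$ a well-defined generalized mesh via \Cref{GenMeshofProperMesh}, since $\partial\mathcal{M}$ must be non-branching for that definition to apply.
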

\begin{proof}
Given $F \in \partial \mathcal{M}$, there exists a unique element $K \in \mathcal{M}$ incident to $F$, and it satisfies
\[K \overunderset{F}{\mathcal{M}}{\longleftrightarrow} \perp\,.\]
Hence, $(F,K) \in \mathbf{F}_b(\mathcal{M}^*)$, and we write $(F,K) =: \varphi(F)$. Since $\mathbf{K}_{(\partial\mathcal{M})^*} = \partial \mathcal{M}$ and $\mathbf{K}_{\partial^* \mathcal{M}^*} = \mathbf{F}_b(\mathcal{M}^*)$, this gives a bijection
\[\varphi: \mathbf{K}_{(\partial\mathcal{M})^*} \to \mathbf{K}_{\partial^* \mathcal{M}^*}\,.\]
Obviously, there holds $\mathcal{K}_{(\partial \mathcal{M})^*} = \mathcal{K}_{\partial^* \mathcal{M}^*} \circ \varphi$. Moreover, given $F,F' \in \mathbf{K}_{(\partial \mathcal{M})^*}$, assume that
$$F \overunderset{S}{(\partial \mathcal{M})^*}{\longleftrightarrow} F'\,.$$ 
Let $K$ and $K'$ be such that $\varphi(F) = (F,K)$ and $\varphi(F') = (F',K')$. Both $K$ and $K'$ are in the star of $S$. Therefore, by \Cref{boisLemma}, there exists $K_1\,,\ldots\,,K_Q$ in the star of $S$, with $K_1 = K$, $K_Q = K'$, such that 
\[K_1 \overunderset{F_1}{\mathcal{M}}{\longleftrightarrow} K_2 \overunderset{F_2}{\mathcal{M}}{\longleftrightarrow} K_2 \overunderset{F_3}{\mathcal{M}}{\longleftrightarrow} \ldots \overunderset{F_{Q-2}}{\mathcal{M}}{\longleftrightarrow} K_{Q-1} \overunderset{F_{Q-1}}{\mathcal{M}}{\longleftrightarrow} K_Q \,,\]
with $F_i \isdef K_i \cap K_{i+1}$, $1 \leq i \leq Q-1$. Furthermore, we have
\[\perp \overunderset{F}{\mathcal{M}}{\longleftrightarrow} K\,, \quad K' \overunderset{F'}{\mathcal{M}}{\longleftrightarrow} \perp\,.\]
By definition of $\mathcal{M}^*$, it follows that
\[\perp \overunderset{F}{\mathcal{M}^*}{\longleftrightarrow}  K \overunderset{F_1}{\mathcal{M}^*}{\longleftrightarrow} K_2 \overunderset{F_3}{\mathcal{M}^*}{\longleftrightarrow} \ldots \overunderset{F_{Q-2}}{\mathcal{M}^*}{\longleftrightarrow} K_{Q-1} \overunderset{F_{Q-1}}{\mathcal{M}^*}{\longleftrightarrow} K' \overunderset{F'}{\mathcal{M}^*}{\longleftrightarrow} \perp\,,\]
that is to say
\[(F',K') = \mathcal{N}_b((F,K),S) \,.\]
We have thus shown
\[F \overunderset{S}{(\partial \mathcal{M})^*}{\longleftrightarrow} F' \implies \varphi(F) \overunderset{S}{\partial^* \mathcal{M}^*}{\longleftrightarrow} \varphi(F')\,.\]
The reverse implication is immediate. Hence $\partial ^*\mathcal{M}^*$ is a relabeling of $(\partial \mathcal{M})^*$.  
\end{proof}
From now on, we drop the star from $\partial^*$. 
\begin{mdframed}
\begin{definition}[Induced orientation on the boundary of a generalized mesh]
\label{defInducedOrientGenMesh}
Let $\mathcal{M}^*$ be an $n$-dimensional generalized mesh, with $n \geq 1$, equipped with some (possibly non-compatible) orientation. We define an orientation of $\partial \mathcal{M}^*$ as follows: for each element $\mathbf{f} = (F,\mathbf{k}) \in \mathbf{F}_b(\mathcal{M}^*)$ of $\mathcal{M}^*$, we choose 
\[[\mathbf{f}]_{\partial\mathcal{M}^*} \isdef [F]_{|[K]}\]
where $K$ is the simplex attached to $\mathbf{k}$, $[K] = [\mathbf{k}]_{\mathcal{M}^*}$ is fixed by the orientation of $\mathcal{M}^*$, and $[F]_{[K]}$ is defined by Eq.~\eqref{inducedOrientFacet}. This orientation of $\partial \mathcal{M}^*$ is called the orientation {\em induced by} $\mathcal{M}^*$. 
\end{definition}
\end{mdframed}
\begin{lemma}
\label{lemInducedOrientBound}
If the orientation of $\mathcal{M}^*$ is compatible, then the induced orientation of $\partial \mathcal{M}^*$ is compatible.
\end{lemma}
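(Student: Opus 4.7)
For $n=1$ there is nothing to prove, since $\partial \mathcal{M}^*$ is then $0$-dimensional and carries no adjacencies to check. For $n \ge 2$, I would pick two elements $\mathbf{f}=(F,\mathbf{k})$ and $\mathbf{f}'=(F',\mathbf{k}')$ of $\partial \mathcal{M}^*$ adjacent through some $(n-2)$-subsimplex $S$. By the definition of $\mathcal{N}_b$ and \Cref{lemChain}, this yields a unique chain
\[\perp \,\overset{F_0}{\longleftrightarrow} \mathbf{k}_1 \overset{F_1}{\longleftrightarrow} \cdots \overset{F_{Q-1}}{\longleftrightarrow} \mathbf{k}_Q \overset{F_Q}{\longleftrightarrow}\, \perp\,,\]
with $\mathbf{k}_1=\mathbf{k}$, $\mathbf{k}_Q=\mathbf{k}'$, $F_0=F$, $F_Q=F'$, and every $F_i$ containing $S$. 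The strategy is to transport the orientation induced on $S$ along this chain and show that all intermediate sign flips compensate to a single global sign flip between $\mathbf{f}$ and $\mathbf{f}'$.

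Concretely, I would set
\[\alpha_i \isdef [S]_{|[F_{i-1}]_{|[\mathbf{k}_i]_{\mathcal{M}^*}}}\,, \qquad \beta_i \isdef [S]_{|[F_{i}]_{|[\mathbf{k}_i]_{\mathcal{M}^*}}} \qquad (1 \le i \le Q)\,,\]
and combine two ingredients. First, inside each $\mathbf{k}_i$, the simplices $F_{i-1}$ and $F_i$ are two distinct facets of the oriented $n$-simplex $[\mathbf{k}_i]_{\mathcal{M}^*}$, so \Cref{lemConsist} applied to $[\mathbf{k}_i]_{\mathcal{M}^*}$ yields
\[\alpha_i = -\beta_i\,.\]
Second, between consecutive elements, the compatibility of the orientation of $\mathcal{M}^*$ at $F_i$ gives $[F_i]_{|[\mathbf{k}_i]_{\mathcal{M}^*}} = -[F_i]_{|[\mathbf{k}_{i+1}]_{\mathcal{M}^*}}$. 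Reversing the orientation of an $(n-1)$-simplex flips the induced orientation of every one of its facets (a direct check from \eqref{inducedOrientFacet}, and from the $\pm$ convention when $n-1=1$), so this gives
\[\beta_i = -\alpha_{i+1}\,.\]

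Chaining the two relations, $\alpha_{i+1} = -\beta_i = \alpha_i$, so all the $\alpha_i$ agree; in particular $\alpha_1 = \alpha_Q = -\beta_Q$. Unwinding definitions,
\[[S]_{|[\mathbf{f}]_{\partial \mathcal{M}^*}} = \alpha_1 = -\beta_Q = -[S]_{|[\mathbf{f}']_{\partial \mathcal{M}^*}}\,,\]
which is precisely the consistency condition \eqref{eq:defConsistOrient} for the induced orientations on $\partial \mathcal{M}^*$. There is no real conceptual obstacle here: the only nontrivial step is the sign bookkeeping along the chain, which itself rests on the small ancillary fact that reversing the ambient orientation flips the induced orientation on every facet. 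Everything else is a direct application of \Cref{lemChain} and \Cref{lemConsist}.
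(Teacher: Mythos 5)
Your proof is correct and follows essentially the same route as the paper's: transport the induced orientation on $S$ along the chain from \Cref{lemChain}, using \Cref{lemConsist} inside each element and the compatibility of $\mathcal{M}^*$ across each interior facet (your $\alpha_i$, $\beta_i$ are exactly the paper's $[F_{i-1}]_{\textup{right}}$, $[F_i]_{\textup{left}}$ restricted to $S$). The ancillary fact you flag -- that reversing the orientation of a simplex flips the induced orientation on all its facets -- is used implicitly in the paper as well, and your sign bookkeeping is right.
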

\begin{proof}
Let $\mathcal{M}^*$ be a generalized mesh equipped with a compatible orientation. 
Consider two elements $\mathbf{f} = (F,\mathbf{k})$ and $\mathbf{f}' = (F',\mathbf{k'})$ such that 
\[\mathbf{f} \overunderset{S}{\partial \mathcal{M}^*}{\longleftrightarrow} \mathbf{f}'\,.\]
We may introduce the chain
\begin{equation}
\perp \overunderset{F_0}{\mathcal{M}^*}{\longleftrightarrow} \mathbf{k}_1  \overunderset{F_1}{\mathcal{M}^*}{\longleftrightarrow} \mathbf{k}_2 \overunderset{F_2}{\mathcal{M}^*}{\longleftrightarrow} \,\ldots\, \overunderset{F_{Q-2}}{\mathcal{M}^*}{\longleftrightarrow}\mathbf{k}_{q-1} \overunderset{F_{Q-1}}{\mathcal{M}^*}{\longleftrightarrow} \mathbf{k}_Q\overunderset{F_Q}{\mathcal{M}^*}{\longleftrightarrow}\,\perp\,,
\end{equation}
where $F_0 = F$, $\mathbf{k}_1 = \mathbf{k}$, $F_Q = F'$ and $\mathbf{k}_Q = \mathbf{k}'$. We define 
\[\begin{array}{lcll}
\,[F_q]_{\textup{left}} &\isdef& \textup{orientation of $F_q$ induced by } [\mathbf{k}_q]_{\mathcal{M}^*} & q \in \{1\,,\ldots\,,Q\}\,,\\
\,[F_q]_{\textup{right}} &\isdef& \textup{orientation of $F_q$ induced by } [\mathbf{k}_{q+1}]_{\mathcal{M}^*} & q \in \{0\,,\ldots\,,Q-1\}\,.
\end{array} \]
On the one hand, by \Cref{lemConsist}, it holds that
\[\forall q \in \{1\,,\ldots\,,Q\}\,, \quad [F_{q-1}]_{\textup{right}} \textup{ and } [F_{q}]_{\textup{left}} \textup{ induce opposite orientations on $S$}\,,\]

On the other hand, by the property that $[\mathbf{k}_q]_{\mathcal{M}^*}$ and $[\mathbf{k}_{q+1}]_{\mathcal{M}^*}$ are consistently oriented (since the orientation of $\mathcal{M}^*$ is compatible), it holds that 
\[\forall q \in \{1\,,\ldots\,,Q-1\}\,, \quad [F_{q}]_{\textup{right}} = -[F_{q}]_{\textup{left}}\,,\]
Consequently, $[F_{q}]_{\textup{left}}$ and $[F_q]_{\textup{right}}$ also induce opposite orientations on $S$. 

It results from those two facts that $[F_0]_{\textup{right}}$ and $[F_Q]_{\textup{left}}$ induce opposite orientations on $S$, so they are consistently oriented. The conclusion of the lemma follows once we notice that $[F_0]_{\textup{left}}$ and $[F_Q]_{\textup{left}}$ are nothing else than the orientations of $F$ and $F'$ fixed by the orientation induced by $\mathcal{M}^*$ on $\partial \mathcal{M}^*$ according to \Cref{defInducedOrientGenMesh}. 
\end{proof}

\section{Fractured meshes and virtual inflation}
\label{sec:fracturedMeshes}
In this section, we focus on one particular kind of generalized mesh: those obtained, starting from a regular mesh $\mathcal{M}_\Omega$ of a domain $\Omega$, by ``marking" a subset $\mathcal{M}_\Gamma$ (called the fracture) of the facets of $\mathcal{M}_\Omega$, and, for each marked facet $F$, dropping the adjacency between the two elements of $\mathcal{M}_\Omega$ incident to $F$. The resulting mesh $\mathcal{M}^*_{\Omega \setminus \Gamma}$ is called a {\em fractured mesh} of $\Omega \setminus \Gamma$ (see below for a more precise definition). Particular examples of such meshes have already been encountered in Example~\ref{example1} and Example~\ref{example4}. In what follows, we write $\Gamma \isdef \abs{\mathcal{M}_\Gamma}$; our main concern is when $\Gamma$ is not a manifold. 

Using the boundary of $\mathcal{M}^*_{\Omega \setminus \Gamma}$, in the sense of the previous section, one can naturally associate to $\mathcal{M}_\Gamma$ a generalized mesh $\mathcal{M}^*_\Gamma(\Omega)$, which is a ``two-sided" version of $\mathcal{M}_\Gamma$. In the particular case where $\mathcal{M}_\Gamma$ is a manifold, $\mathcal{M}^*_\Gamma$ corresponds to the orientation covering of $\mathcal{M}_\Gamma$ \cite{hatcher2002algebraic}. 

The gen-mesh $\mathcal{M}^*_\Gamma(\Omega)$ is perfectly suited for the implementation of a boundary element method on the fracture. The main reason for this will become apparent in the next section. There, we show that the generalized $d$-facets of $\mathcal{M}^*_\Gamma$ provide a convenient representation of the space of (multi-valued) restrictions to $\Gamma$ of discrete differential forms in a neighborhood of $\Gamma$ which are allowed to jump across $\Gamma$.

Requiring a mesh of the exterior of the fracture would squander a crucial advantage of boundary element methods. Fortunately, it turns out that the gen-mesh $\mathcal{M}^*_\Gamma(\Omega)$ is independent of $\Omega$, and there exists an efficient intrinsic algorithm to construct it. The main idea was already outlined in \cite{quotientBem}. The purpose of this section is to describe this algorithm formally and prove that it is correct, i.e. that it returns the same generalized mesh as $\mathcal{M}^*_\Gamma(\Omega)$, up to a relabeling.

\subsection{Fractured meshes}
\label{sec:defFracturedMesh}
Consider a regular mesh $\mathcal{M}_\Omega$ of dimension $n \geq 2$ and let $\Omega \isdef \abs{\mathcal{M}}$. Let $\mathcal{M}_\Gamma \subset \mathcal{F}(\mathcal{M}_\Omega)$ be a (not necessarily regular) mesh of dimension $n-1$, called the fracture, and let ${\Gamma \isdef \abs{\mathcal{M}_\Gamma}}$. For example, if $n = 3$, $\mathcal{M}_\Gamma$ may be the mesh represented in \Cref{example0}.

\begin{mdframed}
\begin{definition}[Fractured mesh]
\label{def:fracturedMesh}
Given $\mathcal{M}_\Omega$ and $\mathcal{M}_\Gamma$ fulfilling the conditions above, the {\em fractured mesh} $\mathcal{M}_{\Omega\setminus\Gamma}^*$ is the generalized mesh with
\begin{itemize}
\item the vertex set $\mathcal{V}_{\mathcal{M}^*_{\Omega \setminus \Gamma}} \isdef \sigma_0(\mathcal{M}_\Omega)$,
\item the elements $\mathbf{K}_{\mathcal{M}_{\Omega\setminus\Gamma}^*} \isdef \mathcal{M}_{\Omega}$,
\item the identity realization $\mathcal{K}_{\mathcal{M}^*_{\Omega \setminus \Gamma}}$ i.e. $\forall K \in \mathcal{M}_\Omega\,,\,\,\mathcal{K}_{\mathcal{M}^*_{\Omega \setminus \Gamma}}(K) = K$,
\item the adjacency graph defined by 
\[K \overunderset{F}{\mathcal{M}^*_{\Omega \setminus \Gamma}}{\longleftrightarrow} K' \iff \left( K \underset{\mathcal{M}_\Omega}{\longleftrightarrow} K' \,\,\textup{ and } \,\, F = K \cap K' \notin \mathcal{M}_\Gamma\right)\,.\]
In words, two elements of $\mathcal{M}^*_{\Omega \setminus \Gamma}$ are adjacent if they share a facet $F$ which is not in the fracture.
\end{itemize}
\end{definition}
\end{mdframed}

\subsection{Extrinsic virtual inflation}

We introduce the following definition, which is illustrated in \Cref{fig:procInflExt}.
\label{inflByExt}
\begin{mdframed}
\begin{definition}[Extrisinc inflation]
\label{def:inflByExt}
Given an $n$-dimensional regular mesh $\mathcal{M}_\Omega$ and a mesh \[\mathcal{M}_\Gamma \subset \mathcal{F}(\mathcal{M}_\Omega) \setminus \partial \mathcal{M}_\Omega\,,\]
the {\em extrinsic inflation} of $\mathcal{M}_\Gamma$ via $\mathcal{M}_\Omega$ is the $(n-1)$-dimensional generalized mesh $\mathcal{M}^*_{\Gamma}(\Omega)$ with
\begin{itemize}
\item the vertex set $\mathcal{V}_{\mathcal{M}^*_\Gamma(\Omega)} \isdef \sigma_0(\mathcal{M}_\Gamma)$, 
\item the elements $\mathbf{K}_{\mathcal{M}^*_\Gamma(\Omega)} \isdef \enstq{(F,K)\in \mathcal{M}_\Gamma \times \mathcal{M}_\Omega}{F \in \mathcal{F}(K)}$,
\item the realization $\mathcal{K}_{\mathcal{M}^*_{\Gamma}(\Omega)}$ defined by
\[\mathcal{K}_{\mathcal{M}^*_\Gamma(\Omega)}((F,K)) \isdef F\]
\item the adjacency graph defined by
$$\forall \mathbf{f},\mathbf{f}' \in \mathbf{K}_{\mathcal{M}^*_{\Gamma}(\Omega)}\,,\quad {\mathbf{f}} \overunderset{S}{\mathcal{M}_{\Gamma}^*(\Omega)}{\longleftrightarrow} {\mathbf{f}}' \iff \mathbf{f} \overunderset{S}{\partial \mathcal{M}_{\Omega \setminus \Gamma}^*}{\longleftrightarrow} \mathbf{f}'\,,$$
where $\mathcal{M}^*_{\Omega \setminus \Gamma}$ is the fractured mesh defined in \Cref{def:fracturedMesh}.
\end{itemize}
\end{definition}
\end{mdframed} 
The gen-mesh $\mathcal{M}^*_\Gamma(\Omega)$ is essentially a ``submesh" of $\partial \mathcal{M}^*_{\Omega \setminus \Gamma}$; it is obtained by discarding the component of $\mathcal{M}^*_{\Omega \setminus \Gamma}$ corresponding to $\partial \Omega$. The assumption that $\mathcal{M}_\Gamma \cap \partial \Omega = \emptyset$ ensures that this separation can be done properly, i.e. that the definition above respects the axioms of \Cref{defGenMesh}. For example, if $\mathcal{M}^*_{\Omega \setminus \Gamma}$ is equal to the generalized mesh $\mathcal{M}^*_1$ of Example \ref{example1}, then $\mathcal{M}^*_{\Gamma}(\Omega)$ is equal to the generalized mesh $\mathcal{M}^*_2$ of Example \ref{example2}. The idea is also represented schematically in \Cref{fig:procInflExt}. Compared to $\mathcal{M}_\Gamma$, the gen-mesh $\mathcal{M}^*_\Gamma(\Omega)$ has twice as many elements. The vertices at which $\mathcal{M}_\Gamma$ is locally a manifold give rise to two distinct generalized vertices of $\mathcal{M}^*_\Gamma(\Omega)$, one for each side of $\mathcal{M}_\Gamma$. 

\begin{figure}
\centering
\includegraphics[width=0.25\textwidth]{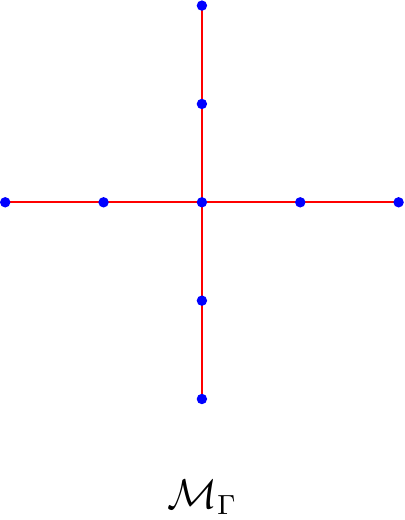} \quad
\includegraphics[width=0.25\textwidth]{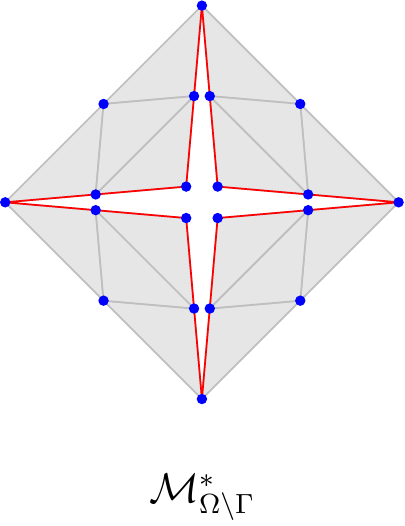} \quad
\includegraphics[width=0.25\textwidth]{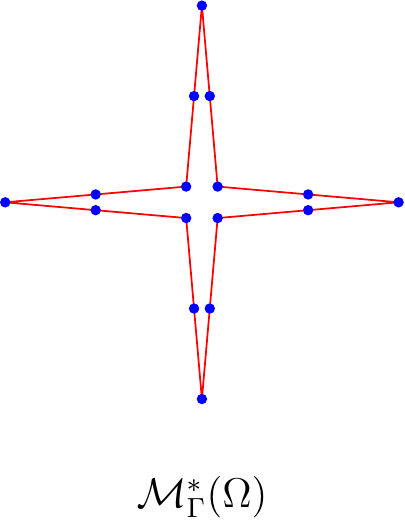}
\caption{Schematic representation of the extrinsic inflation procedure. Starting from a possibly non-regular mesh $\mathcal{M}_\Gamma$ (left panel), form the fractured mesh $\mathcal{M}^*_{\Omega \setminus \Gamma}$ using the exterior regular mesh $\mathcal{M}_\Omega$ (middle), compute its generalized boundary and return the ``submesh" obtained by only keeping components corresponding to $\mathcal{M}_\Gamma$ (right). The blue dots on the left (resp. right) figure represent the vertices (resp. generalized vertices) of $\mathcal{M}_\Gamma$ (resp. of $\mathcal{M}^*_{\Gamma}(\Omega)$). As in \Cref{fig:dM3star}, a gap has been introduced in the center of the cross shape only for visualization purposes. In reality, the four central blue dots share a common location. In the middle figure, only a subset of the elements of $\mathcal{M}_\Omega$ is represented (gray triangles), the exterior boundary $\partial \Omega$ is not displayed. }
\label{fig:procInflExt}
\end{figure}
\Cref{def:inflByExt} suggests a simple, purely combinatorial procedure to compute $\mathcal{M}^*_\Gamma(\Omega)$, which exploits the external mesh $\mathcal{M}_\Omega$. As mentioned above, there is an alternative, intrinsic algorithm. To present it, we need to review some properties of oriented angles in $\R^3$.

\subsection{Oriented angles in $\R^3$}

Let $T_1$ and $T_2$ be two triangles in $\R^3$, sharing an edge. Let us denote their vertices by $\{A,B,C\}$ and $\{B,C,D\}$, respectively. We define the {\em geometric angle} $\Theta(T_1,T_2) \in [0,\pi)$ by 
\begin{equation}
\label{eq:defGeomAngle}
\cos \Theta(T_1,T_2) = \frac{\overrightarrow{OA'} \cdot \overrightarrow{OD'}}{\norm{\overrightarrow{OA'}} \norm{\overrightarrow{OD'}}}\,,
\end{equation}
$C'$ (resp. $D'$) is the orthogonal projections of $C$ (resp. $D$) on the plane perpendicular to $\overrightarrow{BC}$, through the origin $O$, i.e. 
\[\overrightarrow{OA'} = \overrightarrow{OA} -( \overrightarrow{OA}\cdot{\vec u} )\vec u\,, \quad \overrightarrow{OD'} = \overrightarrow{OD} - (\overrightarrow{OD} \cdot \vec u) \vec u\,, \quad \vec u \isdef \frac{\overrightarrow{BC}}{\norm{\overrightarrow{BC}}}\,.\]
Given an orientation $[T_1]$ of $T_1$, we define the {\em oriented angle} $\angle([T_1],T_2) \in (0,2\pi]$ by
\begin{equation}
\label{eq:defOrientedAngle}
\angle([T_1],T_2) = \begin{cases}
\Theta(T_1,T_2) & \textup{ if } \overrightarrow{AD} \cdot \vec n_{T_1} > 0\,, \\
2\pi - \Theta(T_1,T_2)  & \textup{otherwise}\,,
\end{cases}
\end{equation}
where $\vec n_{T_1}$ is the unit normal vector to $T_1$ fixed by its orientation according to Eq.~\eqref{eq:defNormalOrient} (see \Cref{fig:consistOrient}). One has 
\begin{equation}
\label{rulesOrient}
\angle([T_1],T_2) = 2\pi - \angle(-[T_1],T_2)\,.
\end{equation}
In addition, every oriented triangle $[T]$ satisfies $\angle([T],T) = 2\pi$. 
\begin{figure}
\centering
\includegraphics[width=6cm]{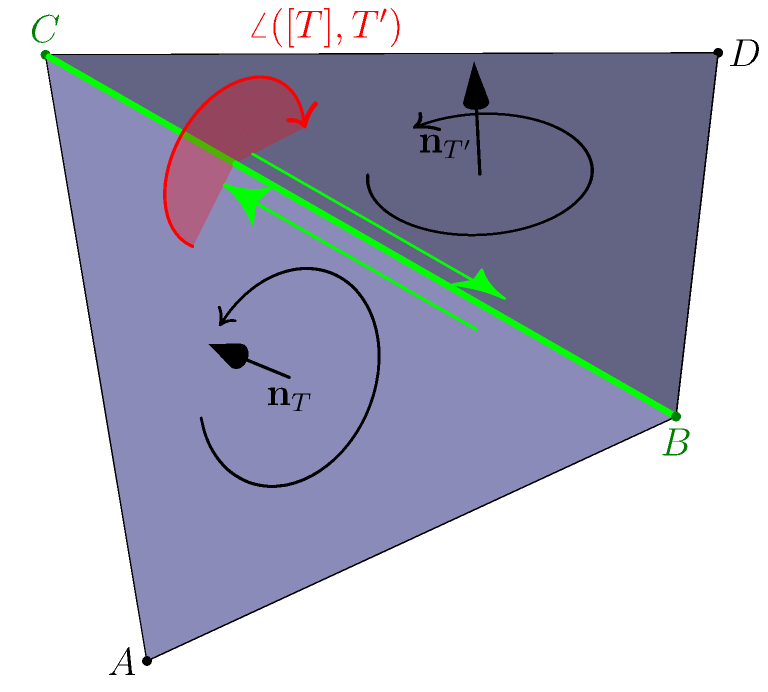}
\caption{The triangles $[T] = [A,B,C]$ and $[T'] = [B,D,C]$ are consistently oriented. The oriented angle $\angle([T],T')$ is drawn in red.}
\label{fig:consistOrient}
\end{figure}

Let $E_1, E_2$ be two edges in $\R^2$ sharing exactly one vertex. Let $[E_1] = [A,B]$ be an orientation of $E_1$ and let $C$ be the vertex of $E_2$ not shared by $E_1$. We define $\angle([E_1],E_2)$ as the counter-clockwise measure in $(0,2\pi)$ of the angle from $\overrightarrow{AB}$ to $\overrightarrow{AC}$ around $A$. We furthermore define $\angle([E_1],E_1) \isdef 2\pi$. With this definition, Eq. \eqref{rulesOrient} also holds for oriented angles between edges in $\R^2$. 
\begin{lemma}
\label{lemAngles}
If $[T_1]$ and $[T_2]$ are consistently oriented triangles in $\R^3$, then
\[\angle([T_1],T_2) = \angle([T_2],T_1)\,.\] 
The same result holds for consistently oriented edges in $\R^2$.
\end{lemma}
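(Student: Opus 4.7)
The plan is to reduce the statement to an equality of signs of two scalar triple products. Since the unoriented angle $\Theta$ is manifestly symmetric in its arguments by Eq.~\eqref{eq:defGeomAngle}, it suffices by Eq.~\eqref{eq:defOrientedAngle} to verify that the sign conditions selecting between $\Theta$ and $2\pi - \Theta$ agree: that is,
\[
\overrightarrow{AD}\cdot\vec{n}_{T_1} > 0 \iff \overrightarrow{DA}\cdot\vec{n}_{T_2} > 0,
\]
where $A$ (respectively $D$) denotes the vertex of $T_1$ (respectively $T_2$) not on the shared edge $\{B,C\}$.

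First I would normalize the orientations. Consistency of $[T_1]$ and $[T_2]$ forces $[\{B,C\}]_{|[T_1]}$ and $[\{B,C\}]_{|[T_2]}$ to be opposite orderings of $B,C$, so up to an even permutation one may write $[T_1] = [A,B,C]$ and $[T_2] = [D,C,B]$. Then by Eq.~\eqref{eq:defNormalOrient},
\[
\vec{n}_{T_1} = \frac{\overrightarrow{AB}\times\overrightarrow{AC}}{\norm{\overrightarrow{AB}\times\overrightarrow{AC}}}, \qquad \vec{n}_{T_2} = \frac{\overrightarrow{DC}\times\overrightarrow{DB}}{\norm{\overrightarrow{DC}\times\overrightarrow{DB}}}.
\]
Next I would rewrite each sign condition as a $3\times 3$ determinant:
\[
\overrightarrow{AD}\cdot(\overrightarrow{AB}\times\overrightarrow{AC}) = \det[D-A,\,B-A,\,C-A],
\]
\[
\overrightarrow{DA}\cdot(\overrightarrow{DC}\times\overrightarrow{DB}) = \det[A-D,\,C-D,\,B-D],
\]
and verify that both determinants equal six times the signed volume of the tetrahedron with vertices $A,B,C,D$ in a common orientation. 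One quick way is to observe that the tuples $(A,D,B,C)$ and $(D,A,C,B)$ differ by the even permutation $(1\,2)(3\,4)$; alternatively, multilinearity applied to $A-D = -(D-A)$, $B-D = (B-A)-(D-A)$, $C-D = (C-A)-(D-A)$ expands the second determinant into the first. Since the denominators $\norm{\overrightarrow{AB}\times\overrightarrow{AC}}$ and $\norm{\overrightarrow{DC}\times\overrightarrow{DB}}$ are strictly positive, equality of numerators delivers the equivalence of sign conditions, and the oriented angles coincide; the degenerate coplanar case $\overrightarrow{AD}\cdot\vec{n}_{T_1} = 0$ falls into the ``otherwise'' branch of Eq.~\eqref{eq:defOrientedAngle} for both angles, so the equality still holds.

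For the 2D case, the same strategy applies, with the scalar triple product replaced by a $2\times 2$ signed area. The main obstacle in either setting is purely bookkeeping: correctly pinning down the representative $[T_2]$ once $[T_1]$ is fixed, and carefully tracking signs through the determinant manipulation; no deep geometric content enters beyond the observation that both triple products compute the same signed tetrahedral volume.
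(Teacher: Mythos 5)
Your proof is correct and follows essentially the same route as the paper's: after normalizing the two oriented representatives using consistency, both arguments reduce the claim to the agreement in sign of the two scalar triple products $\overrightarrow{AD}\cdot\vec n_{T_1}$ and $\overrightarrow{DA}\cdot\vec n_{T_2}$, which is exactly the identity the paper invokes. You merely spell out the determinant/permutation bookkeeping (and the coplanar edge case) in more detail than the paper does.
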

\begin{proof} 
Using the rule \eqref{rulesOrient} above, we can assume without loss of generality that $T_1 = [A,B,C]$ and $T_2 = [C,B,D]$. The corresponding normal vector are given by 
\[\vec n_{1} = \frac{\overrightarrow{AB} \times \overrightarrow{AC} }{\norm{\overrightarrow{AB} \times \overrightarrow{AC} }}\,, \quad \vec n_{2} = \frac{\overrightarrow{BD} \times \overrightarrow{BC} }{\norm{\overrightarrow{BD} \times \overrightarrow{BC} }}\,.\] 
By the properties of cross product, one has
\[\overrightarrow{AD} \cdot \left(\overrightarrow{AB} \times \overrightarrow{AC} \right) = \overrightarrow{DA} \cdot \left(\overrightarrow{BD} \times \overrightarrow{BC} \right)\,.\]
We deduce that $\overrightarrow{AD} \cdot \vec n_{1}$ and $\overrightarrow{DA} \cdot \vec n_{2}$ have the same sign, and thus $\angle([T_1],T_2) = \angle([T_2],T_1)$. 
\end{proof}
We conclude our discussion of oriented angles by stating the following elementary property. The proof is omitted for the sake of conciseness.
\begin{lemma}
\label{corObvious}
Let $\mathcal{M}$ be a regular tetrahedral mesh, and let $[K]$ be a naturally oriented element of $\mathcal{M}$. Suppose that $T_1$, $T_2$ are distinct triangular faces of $K$, sharing an edge $E$, and write $[T_1]$ for the orientation of $T_1$ induced by $[K]$. Then it holds that
\[\forall T' \in \mathcal{F}(\mathcal{M})\textup{ s.t. } E \in \mathcal{F}(T')\,, \quad \angle([T_1],T') \leq \angle([T_1],T_2) \implies T' = T_2\,.\] 
In words, $T_2$ is the unique minimizer of $\angle([T_1],\cdot)$ among triangles $T' \in \mathcal{F}(\mathcal{M})$ incident to $E$.
\end{lemma}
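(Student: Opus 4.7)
The natural approach is to reduce the statement to a two-dimensional picture by projecting onto the plane perpendicular to $E$; the result then follows from the non-overlapping of the tetrahedra in $\textup{st}(E,\mathcal{M})$, which is ensured by the regularity of $\mathcal{M}$.

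First, I would fix a point $P$ in the relative interior of $E$ and let $\Pi$ denote the affine plane through $P$ perpendicular to $E$. Under the orthogonal projection $\R^3 \to \Pi$, every triangle $T' \in \mathcal{F}(\mathcal{M})$ incident to $E$ collapses to a single ray $r_{T'}$ emanating from $P$, and every tetrahedron $K' \in \textup{st}(E,\mathcal{M})$ projects to a planar angular wedge $W_{K'}$ bounded by the two rays associated with its two faces containing $E$. The opening of $W_{K'}$ equals the interior dihedral angle of $K'$ at $E$ and lies in $(0, \pi)$ by non-degeneracy. Since $\mathcal{M}$ is regular, the interiors of distinct tetrahedra are disjoint in $\R^3$, so the open wedges $W_{K'}$ are pairwise disjoint in $\Pi$; in particular, $W_K$ is bounded by $r_{T_1}$ and $r_{T_2}$ with opening $\Theta(T_1,T_2)$.

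Next, I would translate the definition of the oriented angle into this planar picture. Because $[T_1]$ is induced by the natural orientation $[K]$, the normal $\vec n_{T_1}$ is determined, and its projection $\vec n_{T_1}^{2D}$ onto $\Pi$ is perpendicular to $r_{T_1}$. Unpacking the sign condition $\overrightarrow{AD} \cdot \vec n_{T_1}$ in the definition of $\angle$, and using \Cref{lemConsistNatural} to relate $[K]$ to the natural orientations of neighbouring tetrahedra, shows that $\angle([T_1], T')$ is the angular displacement from $r_{T_1}$ to $r_{T'}$ in the rotation sense determined by $\vec n_{T_1}^{2D}$, and that $\angle([T_1], T_2)$ coincides with the dihedral angle $\Theta(T_1,T_2)$.

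Finally, any triangle $T' \in \mathcal{F}(\mathcal{M})$ incident to $E$ with $T' \notin \{T_1,T_2\}$ corresponds to a ray $r_{T'}$ bounding some wedge $W_{K'}$ with $K' \neq K$. By pairwise disjointness of the open wedges, $r_{T'}$ cannot lie in the open wedge $W_K$, hence its angular displacement from $r_{T_1}$ in the chosen rotation sense is strictly larger than the opening $\Theta(T_1,T_2)$ of $W_K$, so $\angle([T_1], T') > \angle([T_1], T_2)$. Combined with $\angle([T_1], T_1) = 2\pi > \angle([T_1], T_2)$, this yields the claim. The main technical obstacle will be the middle step: carefully relating the algebraic sign condition on $\overrightarrow{AD} \cdot \vec n_{T_1}$ to the correct rotation direction in $\Pi$, so that $\angle([T_1], T_2)$ comes out as the interior dihedral angle $\Theta(T_1,T_2)$ rather than its complement; once this orientation bookkeeping is settled, the conclusion is immediate from the regularity of $\mathcal{M}$.
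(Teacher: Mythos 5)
The paper omits the proof of this lemma (``the proof is omitted for the sake of conciseness''), so there is nothing to compare against line by line; your reduction to the plane $\Pi$ perpendicular to $E$ and the disjointness of the projected open wedges is certainly the intended picture, and your final combinatorial step (a ray bounding $W_{K'}$ with $K'\neq K$ cannot enter the open wedge $W_K$) is sound. The difficulty is that the step you explicitly postpone --- relating the sign condition on $\overrightarrow{AD}\cdot\vec n_{T_1}$ to the rotation sense in $\Pi$ --- is the entire content of the lemma, and when one carries it out with the paper's stated conventions it comes out the \emph{opposite} way to what you assert. Unpacking Eq.~\eqref{eq:defOrientedAngle} shows that $\angle([T_1],T')$ is the rotation angle from $r_{T_1}$ to $r_{T'}$ turning \emph{towards} $\vec n_{T_1}$; and with the conventions of \Cref{sec:simplices} (natural orientation $=$ positive determinant, induced orientation via Eq.~\eqref{inducedOrientFacet}, normal via Eq.~\eqref{eq:defNormalOrient}), the orientation $[T_1]_{|[K]}$ carries the \emph{outward} normal of $K$. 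Rotating towards the outward normal leaves $K$ immediately, so $\angle([T_1],T_2)=2\pi-\Theta(T_1,T_2)$, not $\Theta(T_1,T_2)$, and $T_2$ is then the unique \emph{maximizer} over $T'\neq T_1$, not the minimizer. Concretely, take $K=\{P_1,P_2,P_3,P_4\}$ with $P_1=(0,0,0)$, $P_2=(1,0,0)$, $P_3=(0,1,0)$, $P_4=(0,0,1)$, $T_1=\{P_1,P_2,P_3\}$, $T_2=\{P_1,P_2,P_4\}$, $E=\{P_1,P_2\}$: the induced orientation of $T_1$ is $[P_2,P_1,P_3]$, whose normal is $(0,0,-1)$ (outward), $\overrightarrow{AD}=P_4-P_3$ has negative inner product with it, so $\angle([T_1],T_2)=2\pi-\pi/2=3\pi/2$, whereas the face through $E$ of a neighbouring tetrahedron lying just below the plane $z=0$ realizes an oriented angle close to $0$.

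So the gap is real: a complete proof must either flip a sign in one of the conventions --- e.g.\ take $\vec n_{T_1}$ to be the inward normal, i.e.\ use $-[T_1]_{|[K]}$, which by Eq.~\eqref{rulesOrient} replaces every angle by $2\pi$ minus itself and turns your wedge argument into a correct one --- or restate the conclusion with the reversed inequality. (The intended convention is visible from how the lemma is invoked in the proof of \Cref{theoremInflation}: the chain of \Cref{lemChain} enters $K$ first, so $\angle([F],\cdot)$ must increase in the direction of the \emph{interior} of $K$.) You correctly located the delicate point, but deferring it means the proposal does not yet establish the statement, and the deferred computation does not close in the way you predict.
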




\subsection{Intrinsic mesh inflation}
\label{sec:intrinsic}
In what follows, we consider an $(n-1)$-dimensional mesh $\mathcal{M}_\Gamma$ with vertices in $\R^{n}$, where $n = 2$ or $3$. Let 
\begin{equation}
\label{eq:defOrientFacetsDouble}
\mathscr{F}_\Gamma \isdef \{\textup{oriented simplex of the form $[F]$ with $F \in \mathcal{M}_\Gamma$} \}\,.
\end{equation}
Every element $F$ of $\mathcal{M}_\Gamma$ appears twice in $\mathscr{F}_\Gamma$, once with each of the two opposite orientations of $F$.

Given $[F] \in \mathscr{F}_\Gamma$, and $S \in \mathcal{F}(F)$, let $F'$ be the minimizer, among elements $F'\in\mathcal{M}_\Gamma$ incident to $S$ (including $F$ itself), of the quantity $\angle([F],F')$. Then define 
$$\widetilde{\mathcal{N}}_b([F],S) \isdef [F']$$ 
where $[F']$ is the orientation of $F'$ consistent with $[F]$. 
\begin{mdframed}
\begin{definition}[Intrinsic virtual inflation]
\label{defMeshInflGeo}
The {\em intrinsic virtual inflation} of $\mathcal{M}_\Gamma$ is the generalized mesh $\mathcal{M}^*_{\Gamma}$ defined by
\begin{itemize}
\item the vertex set $\mathcal{V}_{\mathcal{M}^*_\Gamma} \isdef \sigma_0(\mathcal{M}_\Gamma)$ 
\item the elements $\mathbf{K}_{\mathcal{M}^*_{\Gamma}} \isdef \mathscr{F}_\Gamma$, 
\item the realization $\mathcal{K}_{\mathcal{M}^*_{\Gamma}}$ defined by 
\[\forall [F] \in \mathscr{F}_\Gamma\,,\quad\mathcal{K}_{\mathcal{M}^*_{\Gamma}}([F]) \isdef F\,,\]
\item the adjacency graph defined by
$$[F] \overunderset{S}{\mathcal{M}^*_{\Gamma}}{\longleftrightarrow} [F'] \iff [F'] = \widetilde{\mathcal{N}}_b([F],S)$$
\end{itemize}
\end{definition}
\end{mdframed}
By \Cref{lemAngles}, and since an oriented triangle is never consistently oriented with itself, this is a well-defined generalized mesh. 
\begin{remark}
\Cref{defMeshInflGeo} can be converted straightforwardly into an algorithm. On the input of a (standard) mesh $\mathcal{M}_\Gamma$, it returns an instance of generalized mesh representing $\mathcal{M}^*_\Gamma$. We refer to our implementation \cite{matlabCode} for details. 
\end{remark} 

\begin{theorem}
\label{theoremInflation}
Let  $\mathcal{M}_\Omega$ be a regular $n$-dimensional mesh in $\R^n$, with $n = 2$ or $3$, and let \[\mathcal{M}_\Gamma \subset \mathcal{F}(\mathcal{M}_\Omega) \setminus \partial \mathcal{M}_\Omega\,.\]
Let $\mathcal{M}^*_{\Gamma}(\Omega)$ be the extrinsic inflation of $\mathcal{M}_\Gamma$ via $\mathcal{M}_\Omega$ and $\mathcal{M}^*_\Gamma$ the intrinsic inflation of $\mathcal{M}_\Gamma$, cf. Definitions \ref{def:inflByExt} and \ref{defMeshInflGeo} respectively. Then, $\mathcal{M}^*_{\Gamma}(\Omega)$ and $\mathcal{M}^*_{\Gamma}$ are equal up to a relabeling.
\end{theorem}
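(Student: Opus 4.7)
The strategy is to construct an explicit relabeling
$$\varphi:\mathbf{K}_{\mathcal{M}^*_\Gamma(\Omega)}\longrightarrow\mathbf{K}_{\mathcal{M}^*_\Gamma}=\mathscr{F}_\Gamma,\qquad \varphi\bigl((F,K)\bigr)\isdef [F]_{|[K]},$$
where $[K]$ is the natural orientation of $K\subset\R^n$. By regularity of $\mathcal{M}_\Omega$, every $F\in\mathcal{M}_\Gamma\subset\mathcal{F}(\mathcal{M}_\Omega)\setminus\partial\mathcal{M}_\Omega$ is incident to exactly two elements $K,K'\in\mathcal{M}_\Omega$, and \Cref{lemConsistNatural} guarantees that $[K]$ and $[K']$ are consistently oriented, so $[F]_{|[K]}=-[F]_{|[K']}$. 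Therefore $\varphi$ hits each of the two orientations of each $F\in\mathcal{M}_\Gamma$ exactly once, which shows $\varphi$ is a bijection; the compatibility $\mathcal{K}_{\mathcal{M}^*_\Gamma}\circ\varphi=\mathcal{K}_{\mathcal{M}^*_\Gamma(\Omega)}$ is immediate from the definitions.

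The main content is to show $\varphi$ preserves adjacency. Fix $(F,K),(F',K')$ and a common subfacet $S\in\mathcal{F}(F)\cap\mathcal{F}(F')$. Unfolding \Cref{def:inflByExt} and \Cref{def:fracturedMesh} and invoking \Cref{lemChain}, the extrinsic relation $(F,K)\overunderset{S}{\mathcal{M}^*_\Gamma(\Omega)}{\longleftrightarrow}(F',K')$ is equivalent to the existence of a unique chain
$$\perp\overset{F_0}{\longleftrightarrow}K_1\overset{F_1}{\longleftrightarrow}K_2\overset{F_2}{\longleftrightarrow}\cdots\overset{F_{Q-1}}{\longleftrightarrow}K_Q\overset{F_Q}{\longleftrightarrow}\perp$$
in $\mathcal{M}^*_{\Omega\setminus\Gamma}$, with $K_1=K$, $K_Q=K'$, $F_0=F$, $F_Q=F'$, all facets containing $S$, and $F_1,\ldots,F_{Q-1}\notin\mathcal{M}_\Gamma$. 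To match this to the intrinsic relation $\widetilde{\mathcal{N}}_b([F]_{|[K]},S)=[F']_{|[K']}$, two properties of the chain must be extracted. First, consistency: $[F]_{|[K]}$ and $[F']_{|[K']}$ induce opposite orientations on $S$. This is obtained by replaying the calculation of \Cref{lemInducedOrientBound}, combining \Cref{lemConsist} applied inside each $K_q$ (the two facets $F_{q-1},F_q$ of $K_q$ are consistently oriented) with \Cref{lemConsistNatural} applied across each interface $F_q$ (so $[F_q]_{|[K_q]}=-[F_q]_{|[K_{q+1}]}$). Second, minimality: $F'=F_Q$ minimizes $\angle([F]_{|[K]},\cdot)$ over facets of $\mathcal{M}_\Gamma$ incident to $S$. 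This follows by iterating \Cref{corObvious} inside each $K_q$ — at every step $F_q$ is the unique minimum of $\angle([F_{q-1}]_{|[K_q]},\cdot)$ among all facets of $\mathcal{M}_\Omega$ incident to $S$, i.e.\ the immediate angular neighbor of $F_{q-1}$ in the fan around $S$ — combined with the angle-reversal rule \eqref{rulesOrient} and \Cref{lemAngles} at each interface, so that the chain realizes a monotone sweep around $S$. Since $F_1,\ldots,F_{Q-1}$ lie outside $\mathcal{M}_\Gamma$, the first $\mathcal{M}_\Gamma$-facet encountered along this sweep is $F_Q=F'$, which is therefore the minimizer over $\mathcal{M}_\Gamma$.

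The reverse direction is obtained constructively: assuming $\widetilde{\mathcal{N}}_b([F]_{|[K]},S)=[F']_{|[K']}$, one builds the chain greedily by selecting $F_q$ inside $K_q$ via \Cref{corObvious} and taking $K_{q+1}$ to be the unique other element of $\mathcal{M}_\Omega$ incident to $F_q$, terminating as soon as $F_q\in\mathcal{M}_\Gamma$; the hypothesis $\mathcal{M}_\Gamma\cap\partial\mathcal{M}_\Omega=\emptyset$ together with the minimality of $F'$ ensure that no $\partial\mathcal{M}_\Omega$-facet is reached before $F'$, so the construction never gets stuck. The main obstacle is the second item above: turning the iterated \emph{local} minimality inside each $K_q$ (a statement about a single tetrahedron or triangle) into a \emph{global} statement about the oriented angle $\angle([F]_{|[K]},\cdot)$ along the whole sweep, so that one recovers exactly the minimization defining $\widetilde{\mathcal{N}}_b$. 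The rest of the verification — that adjacency in $\partial^*\mathcal{M}^*_{\Omega\setminus\Gamma}$ that crosses the boundary component of $\partial\Omega$ is correctly excluded from $\mathbf{K}_{\mathcal{M}^*_\Gamma(\Omega)}$, and that the relabeling by $\varphi$ is well-defined even when several vertices of $\mathcal{M}_\Gamma$ coincide geometrically — is bookkeeping once the above two properties are in hand.
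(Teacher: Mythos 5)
Your proposal is correct and follows essentially the same route as the paper: the same bijection $\varphi((F,K)) = [F]_{|[K]}$ via natural orientations and \Cref{lemConsistNatural}, the same reduction of adjacency-preservation to the chain of \Cref{lemChain}, and the same two key ingredients — consistency of orientation via the argument of \Cref{lemInducedOrientBound}, and identification of $F'$ as the angular minimizer over $\mathcal{M}_\Gamma$ by iterating \Cref{corObvious} along the chain, using that the intermediate facets lie outside $\mathcal{M}_\Gamma$. The "local-to-global" monotone-sweep step you flag as the main obstacle is treated at the same level of detail in the paper itself.
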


\begin{proof}
To fix ideas, we write the proof for the case $n=3$ (the case $n=2$ is analogous). Recall that the elements of $\mathcal{M}^*_{\Gamma}(\Omega)$ are the pairs $(F,K)$ with $K \in \mathcal{M}_\Omega$ and $F \in \mathcal{M}_\Gamma \cap \mathcal{F}(K)$. Given such a pair, we define 
$$\varphi((F,K)) \isdef [F]_{|[K]}$$ 
where $[K]$ is the natural orientation of $K$ (recall the notation from Eq. \eqref{inducedOrientFacet}). Since $\mathcal{M}_\Gamma$ is disjoint from the boundary of $\mathcal{M}_\Omega$, by \Cref{lemConsistNatural} this defines a bijection 
\[\varphi: \mathbf{K}_{\mathcal{M}^*_{\Gamma}(\Omega)} \to \mathbf{K}_{\mathcal{M}^*_{\Gamma}}\,,\]
with, obviously,  $\mathcal{K}_1 \circ \varphi = \mathcal{K}_2$, where $\mathcal{K}_1$ and $\mathcal{K}_2$ are the realizations of $\mathcal{M}^*_{\Gamma}$ and $\mathcal{M}^*_{\Gamma}(\Omega)$, respectively.

It remains to show that the adjacency graphs of $\mathcal{M}^*_{\Gamma}$ and $\mathcal{M}^*_{\Gamma}(\Omega)$ are compatible with this bijection. This amounts to proving that, for $(F,K) \in \mathbf{K}_{\mathcal{M}^*_{\Gamma}(\Omega)}$ and $S \in \mathcal{F}(F)$, there holds
\begin{equation}
\label{claimCompatGraphs}
\varphi\left(\mathcal{N}_b((F,K),S)\right) = \widetilde{\mathcal{N}}_b\left(\varphi((F,K)),S\right)\,.
\end{equation}
Hence, pick a pair $(F,K) \in \mathcal{M}_\Omega \times \mathcal{M}_\Gamma$, with $F \in \mathcal{F}(K)$, let $[F] = \varphi((F,K))$ and let $S \in \mathcal{F}(F)$. Let $\mathcal{M}^*_{\Omega \setminus \Gamma}$ be the fractured mesh defined by $\mathcal{M}_\Omega$ and $\mathcal{M}_\Gamma$ (cf \Cref{def:fracturedMesh}). By definition of $\mathcal{M}^*_{\Omega \setminus \Gamma}$, we have
\[\perp \overunderset{F}{\mathcal{M}^*_{\Omega \setminus \Gamma}}{\longleftrightarrow} K\,.\] 
According to \Cref{lemChain}, we can introduce the chain 
\begin{equation}
\label{chainProof}
\perp \overunderset{F}{\mathcal{M}^*_{\Omega \setminus \Gamma}}{\longleftrightarrow} K \overunderset{F_1}{\mathcal{M}^*_{\Omega \setminus \Gamma}}{\longleftrightarrow} K_2 \overunderset{F_2}{\mathcal{M}^*_{\Omega \setminus \Gamma}}{\longleftrightarrow} \,\ldots\, \overunderset{F_{Q-2}}{\mathcal{M}^*_{\Omega \setminus \Gamma}}{\longleftrightarrow} K_{Q-1} \overunderset{F_{Q-1}}{\mathcal{M}^*_{\Omega \setminus \Gamma}}{\longleftrightarrow} K_{Q}\overunderset{F_{Q}}{\mathcal{M}^*_{\Omega \setminus \Gamma}}{\longleftrightarrow}\,\perp\,.
\end{equation}
Note that since $S \in \mathcal{F}(F_Q)$, $S \in \sigma(\mathcal{M}_\Gamma)$ and $\mathcal{M}_\Gamma \cap \partial \mathcal{M}_\Omega = \emptyset$, it must be true that $F_Q \in \mathcal{M}_\Gamma$. We rewrite $K' \isdef K_{Q}$, $F' \isdef F_Q$ so that
$$(F',K') = \mathcal{N}_b((F,K),S)\,.$$ 
From \Cref{corObvious}, we deduce that for each $i \in \{1\,,\ldots\,,Q\}$,
\[F_i = \arg\min \enstq{\angle([F],\tilde{F})}{\tilde{F} \in \mathcal{F}(\mathcal{M}_\Omega) \setminus \{F_j\}_{1 \leq j \leq i-1}}\,.\]
By definition of $\mathcal{M}^{*}_{\Omega \setminus \Gamma}$, for $i \in \{1\,,\ldots\,,Q-1\}$, the facet $F_i$ is  not in $\mathcal{M}_\Gamma$. Hence
\begin{equation}
\label{FQminAngle}
F_Q = F' = \arg\min \enstq{\angle([F],\tilde{F})}{\tilde{F} \in \mathcal{M}_\Gamma}\,.
\end{equation}
Finally, let 
\[[F'] \isdef \varphi((F',K'))\,.\]
Reasoning as in the proof of \Cref{lemInducedOrientBound}, we see that $[F]$ and $[F']$ are consistently oriented. From this property, Eq.~\eqref{FQminAngle} and \Cref{defMeshInflGeo}, it follows that
\[\widetilde{N}_b([F],S) = [F']\,,\]
which proves the claim \eqref{claimCompatGraphs} and concludes the proof of the theorem. 
\end{proof}

The following result is immediate, by the very definition of $\widetilde{\mathcal{N}}_b$:
\begin{lemma}
Let $\mathcal{M}_\Gamma$ be a $(n-1)$-dimensional generalized mesh in $\R^{n}$, with $n = 2$ or $3$. Then $\mathcal{M}^*_\Gamma$ is orientable, with a compatible orientation given by
\[\forall [F] \in \mathcal{M}^*_\Gamma\,, \quad  [F]_{\mathcal{M}^*_\Gamma} \isdef [F]\,.\]
\end{lemma}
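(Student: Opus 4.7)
The proof should be essentially immediate from the definition of the adjacency graph of $\mathcal{M}^*_\Gamma$ given in \Cref{defMeshInflGeo}. The plan is to unwind the definitions and verify that the proposed assignment $[F] \mapsto [F]_{\mathcal{M}^*_\Gamma} \isdef [F]$ satisfies the compatibility axiom of \Cref{defOrientableGenMesh}.

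First, the assignment $[F] \mapsto [F]$ is a well-defined orientation of $\mathcal{M}^*_\Gamma$ because by construction each element of $\mathbf{K}_{\mathcal{M}^*_\Gamma} = \mathscr{F}_\Gamma$ is an oriented simplex $[F]$, and its realization is the underlying unoriented simplex $F = \mathcal{K}_{\mathcal{M}^*_\Gamma}([F])$. So $[F]$ is indeed an orientation of $\mathcal{K}_{\mathcal{M}^*_\Gamma}([F])$, as required.

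Next, take two adjacent elements $[F], [F'] \in \mathbf{K}_{\mathcal{M}^*_\Gamma}$, meaning there exists a subsimplex $S$ such that $[F] \overunderset{S}{\mathcal{M}^*_\Gamma}{\longleftrightarrow} [F']$. By \Cref{defGenMesh}(i)–(ii), the adjacency is through a common facet of $F$ and $F'$, and by the definition of the adjacency graph of $\mathcal{M}^*_\Gamma$ in \Cref{defMeshInflGeo}, this is equivalent to $[F'] = \widetilde{\mathcal{N}}_b([F],S)$. Now, $\widetilde{\mathcal{N}}_b([F],S)$ is defined precisely as the orientation of the angle-minimizing facet $F'$ that is consistent with $[F]$. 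Thus $[F]$ and $[F']$ are consistently oriented in the sense of Eq.~\eqref{eq:defConsistOrient}, which is exactly the compatibility condition required for $\mathcal{M}^*_\Gamma$ to be orientable.

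I do not anticipate any serious obstacle: the content of the lemma is essentially that the design choice made in the construction of $\widetilde{\mathcal{N}}_b$ was precisely to guarantee compatibility. The only subtle point worth mentioning is that the adjacency relation in $\mathcal{M}^*_\Gamma$ must be symmetric in the two arguments, but this has already been asserted as a consequence of \Cref{lemAngles} in the text following \Cref{defMeshInflGeo}, so the compatibility condition need only be verified in one direction, as done above.
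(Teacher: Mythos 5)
Your proof is correct and follows exactly the route the paper takes: the paper states the lemma is ``immediate, by the very definition of $\widetilde{\mathcal{N}}_b$,'' and your write-up simply spells out that one-line argument, namely that $\widetilde{\mathcal{N}}_b([F],S)$ is by construction the orientation of the neighboring facet consistent with $[F]$, so the identity assignment satisfies the compatibility axiom of \Cref{defOrientableGenMesh}. Your remark that well-definedness (symmetry of the adjacency) rests on \Cref{lemAngles} matches the paper's own comment following \Cref{defMeshInflGeo}.
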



\section{Finite element exterior calculus on generalized meshes}

\label{sec:FEEC}

\subsection{Whitney forms}

We now fix the Euclidean space $\R^m$ as the ambient space. Every generalized mesh discussed below has its vertices in $\R^m$ and its elements are non-degenerate $n$-simplices, with $n \leq m$. Recall that for an element $\mathbf{k}$ of $\mathcal{M}^*$ attached to the simplex $K$, we write $\abs{\mathbf{k}} \isdef \abs{K}$.

In this setting, we discuss the construction of lowest-order discrete differential forms on generalized meshes, which are the simplest specimen of trial and test spaces required for Finite-Element Exterior Calculus (FEEC, see \cite{FEEC}, see also \cite{boon2021functional} for a related work on differential forms on mixed-dimensional geometries). Those spaces of discrete differential forms are spanned by locally supported basis functions, known as Whitney forms \cite{whitney}.

To define Whitney forms we need some additional structure. We have to choose an orientation for every subsimplex $S \in \sigma(\mathcal{M}^*)$. One standard way to do this is to choose an arbitrary order on the finite set $\sigma_0(\mathcal{M}^*)$ and equip each $d$-simplex with the orientation corresponding to this order. 
Typically, this doesn't incur any additional cost in the implementation, as simplices are stored using arrays, which are naturally ordered. 

We also need {\em barycentric coordinate functions} on a non-degenerate $n$-simplex $K$. Given a vertex $V$ of $K$, the barycentric coordinate $\lambda^{K}_V: \R^{n} \to \R$ is the affine function defined by the equations
\begin{equation}
\label{eq:defBarycentricFun}
\forall V' \in \sigma_0(K)\,, \quad \lambda_V^{K}(V') = \begin{cases}
1 & \textup{if } V = V'\,,\\
0 & \textup{otherwise.}
\end{cases}
\end{equation}

\begin{mdframed}
\begin{definition}[Whitney form associated to a generalized facet]
\label{def:WhitneyForms}
Consider a generalized $d$-subfacet $\mathbf{s} \in \mathbf{S}_d(\mathcal{M}^*)$, $\mathbf{s} = (S,\gamma)$ with the orientation of $S$ given by the ordering $(V_1,\ldots,V_{d+1})$. The associated {\em Whitney $d$-form} $\omega_{\mathbf{s}}$ is a tuple of differential forms
\[\omega_{\mathbf{s}} = (\omega_{\mathbf{s}}^{\mathbf{k}})_{\mathbf{k} \in \mathcal{M}^*}\,.\]
For each $\mathbf{k} \in \mathcal{M}^*$, $\omega_{\mathbf{s}}^{\mathbf{k}}$ is the $d$-differential form on $\abs{\mathbf{k}}$ defined by 
\begin{equation}
\label{whitneyElem}
\omega^{\mathbf{k}}_{\mathbf{s}} \isdef \begin{cases}
	{\displaystyle\sum_{j = 1}^{d+1}(-1)^{j+1} \lambda_{V_j}^{K} \wedge \textup{d}\lambda_{V_1}^{K} \wedge \ldots\wedge\widehat{\textup{d}\lambda_{V_j}^{K}} \wedge \ldots \wedge \textup{d}\lambda_{V_{d+1}}^{K}\,,} & \textup{for } \mathbf{k} \in \gamma\,, \\[1.5em]
	0 & \textup{for } \mathbf{k} \notin \gamma\,, 
\end{cases} 
\end{equation}
where $K$ is the simplex attached to $\mathbf{k}$, $\textup{d}$ designates the exterior derivative, and the hat notation is used to denote a suppressed term.
\end{definition}
\end{mdframed}
It is a standard fact that this definition is correct, i.e. that the formula \eqref{whitneyElem} is invariant with respect to even permutations of the chosen ordering $(V_1,\ldots,V_{d+1})$. Note that for $d = 0$, when $S$ is a vertex, say, $V_1$ of $\mathbf{k} \in \gamma$, then $\omega^{\mathbf{k}}_{\mathbf{s}}$ agrees with the barycentric coordinate function of $\abs{\mathbf{k}}$ associated to the vertex $V_1$ for $\mathbf{k}$.

Using the vector space structure of tuples of differential forms, we can consider linear combinations of the Whitney forms defined above, and we define $\Lambda^d(\mathcal{M}^*)$ as the vector space spanned by $\{\omega_\mathbf{s}\}_{\mathbf{s} \in \mathbf{S}_d(\mathcal{M}^*)}$. We call its elements the {\em Whitney $d$-forms on $\mathcal{M}^*$}.
\begin{mdframed}
\begin{definition}[Trace]
\label{def:trace}
Given a Whitney $d$-form $\omega = (\omega^{\mathbf{k}})_{\mathbf{k}\in {\mathcal{M}^*}} \in \Lambda^d(\mathcal{M}^*)$, the trace $\textup{Tr}\,\omega$ is the Whitney $d$-form $\nu = (\nu^\mathbf{f})_{\mathbf{f} \in {\partial \mathcal{M*}}} \in \Lambda^d(\partial \mathcal{M}^*)$ where, for each element $\mathbf{f} = (F,\mathbf{k})$ of $\partial\mathcal{M}^*$, $\nu^{\mathbf{f}}$ is given on $\abs{F}$ by
\[ \nu^{\mathbf{f}} \isdef \textup{Tr}_{\abs{\mathbf{k}},\abs{F}}\omega^{\mathbf{k}}\,.\]
Here, $\textup{Tr}_{\Omega,\Omega'} \mu$ is the {\em trace} of the differential form $\mu$ from the manifold $\Omega$ to the submanifold $\Omega' \subset \Omega$ \cite[p.16]{FEEC}. It is defined as the pullback
\[\textup{Tr}_{\Omega,\Omega'} \mu \isdef \iota^* \mu\]
where $\iota$ is the inclusion $\Omega' \hookrightarrow \Omega$. 
\end{definition}
\end{mdframed}
\begin{lemma}
\label{patchCond}
The Whitney $d$-forms satisfy the following {\em patch condition}: if  $\mathbf{k} \overunderset{F}{\mathcal{M}^*}{\longleftrightarrow} \mathbf{k}'$, then
\[ \forall \omega \in \Lambda^d(\mathcal{M}^*)\,, \quad \textup{Tr}_{\abs{\mathbf{k}},\abs{F}} \omega^{\mathbf{k}}=\textup{Tr}_{\abs{\mathbf{k'}},\abs{F}} \omega^{\mathbf{k'}} \,.\]
\end{lemma}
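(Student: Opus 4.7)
By linearity of all operations involved, it suffices to establish the patch condition for a single basis element $\omega = \omega_{\mathbf{s}}$ with $\mathbf{s} = (S,\gamma) \in \mathbf{S}_d(\mathcal{M}^*)$. Let $K$ and $K'$ denote the simplices attached to $\mathbf{k}$ and $\mathbf{k}'$ respectively. Write $S = \{V_1,\ldots,V_{d+1}\}$ with the orientation used in \Cref{def:WhitneyForms}. I will split into two cases according to whether $S$ is contained in $F$ or not.

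First, I would handle the easy case $S \not\subset F$. Pick any vertex $V_j \in S \setminus F$. Since $\lambda_{V_j}^{K}$ is affine on $|K|$ and vanishes at every vertex of $F$, its restriction (hence its pullback) to $|F|$ is identically zero; consequently $\textup{Tr}_{|\mathbf{k}|,|F|}\, d\lambda_{V_j}^{K} = d(\textup{Tr}_{|\mathbf{k}|,|F|}\lambda_{V_j}^{K}) = 0$ as well. Looking at formula \eqref{whitneyElem}, every one of the $d+1$ summands contains a factor involving $V_j$ (either $\lambda_{V_j}^K$ in the $j$-th term, or $d\lambda_{V_j}^K$ in every other term), so its pullback to $|F|$ vanishes. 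The same argument applied to $\mathbf{k}'$ shows that both sides of the patch condition are zero.

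Next, the substantive case $S \subset F$. Since $\mathbf{k} \overset{F}{\longleftrightarrow} \mathbf{k}'$, the extended convention below \eqref{eq:notationAdj} gives $\mathbf{k} \overset{S}{\longleftrightarrow} \mathbf{k}'$, so there is an edge between $\mathbf{k}$ and $\mathbf{k}'$ in the graph $\mathcal{G}(S)$. In particular $\mathbf{k}$ and $\mathbf{k}'$ belong to the \emph{same} connected component of $\mathcal{G}(S)$. Therefore either both belong to $\gamma$, or neither does. In the latter situation $\omega^{\mathbf{k}}_{\mathbf{s}} = \omega^{\mathbf{k}'}_{\mathbf{s}} = 0$ and the patch condition is trivial. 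In the former, the key ingredient is the classical fact that for any vertex $V \in F$ the barycentric coordinate functions satisfy
\[
\textup{Tr}_{|K|,|F|}\,\lambda_V^{K} = \lambda_V^{F},
\]
since both sides are affine on $|F|$ and take the same nodal values. Pulling back \eqref{whitneyElem} to $|F|$ and using this identity (together with its differentiated version for the $d\lambda_V^K$'s), one obtains
\[
\textup{Tr}_{|\mathbf{k}|,|F|}\,\omega^{\mathbf{k}}_{\mathbf{s}} = \sum_{j=1}^{d+1}(-1)^{j+1}\lambda_{V_j}^{F}\, d\lambda_{V_1}^{F}\wedge\cdots\wedge\widehat{d\lambda_{V_j}^{F}}\wedge\cdots\wedge d\lambda_{V_{d+1}}^{F},
\]
an expression that depends only on $F$ and on the chosen ordering of $S$, not on the ambient element. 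The analogous computation on $\mathbf{k}'$ produces the same right-hand side, yielding the desired equality.

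The only step requiring actual work is the trace formula above, but this is a routine computation with barycentric coordinates that is standard in the Whitney form literature. The genuinely new observation specific to the generalized mesh setting is the use of adjacency through $F$ (and hence through every subsimplex of $F$) to conclude that $\mathbf{k}$ and $\mathbf{k}'$ lie in the same connected component of $\mathcal{G}(S)$, so that the indicator of membership in $\gamma$ appearing in \eqref{whitneyElem} yields consistent values on the two sides of $F$.
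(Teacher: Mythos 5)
Your proof is correct and follows essentially the same route as the paper's: reduce to a basis form $\omega_{\mathbf{s}}$ by linearity, dispose of the case $S\not\subset F$ by noting that some $\lambda_{V_j}^K$ (and hence $d\lambda_{V_j}^K$) pulls back to zero on $\abs{F}$, and in the case $S\subset F$ use adjacency through $S$ to see that $\mathbf{k}$ and $\mathbf{k}'$ are both in $\gamma$ or both outside it, concluding with the trace identity $\textup{Tr}_{\abs{K},\abs{F}}\lambda_V^K=\lambda_V^F$ and the compatibility of pullback with $\wedge$ and $\textup{d}$. The only cosmetic omission is that in the first case you should note that either $\mathbf{k}'\notin\gamma$ (so its form is zero by definition) or $\mathbf{k}'\in\gamma$ (so $S\in\sigma(\mathbf{k}')$ and your vertex argument applies), but this does not affect correctness.
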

\begin{proof}
By linearity, it suffices to check that the patch condition is satisfied by $\omega_{\mathbf{s}}$ for each $\mathbf{s} \in \mathbf{S}_d(\mathcal{M}^*)$. Hence let $\mathbf{s} = (S,\gamma)$, with the orientation defined by the ordering $(V_1\,,\ldots\,,V_{d+1})$. 
We first remark that if $K$ is the simplex attached to $\mathbf{k}$, and if $S \in \sigma(K)$ doesn't contain $V_j$, the function $\lambda_{V_j}^{K}$ is identically $0$ on $\abs{S}$. Hence, if $F$ is a facet of $\mathbf{k}$ not containing $S$, then $\omega_{\mathbf{s}}^{\mathbf{k}}$ vanishes on $\abs{F}$. Consequently, if $\mathbf{k}$ and $\mathbf{k}'$ are adjacent through such a facet $F$, the patch condition is immediately verified. On the other hand, if $F$ does contain $S$, then it follows that $\mathbf{k}$ and $\mathbf{k}'$ are adjacent through $S$, so there are two cases: either $\mathbf{k} \notin \gamma$ and $\mathbf{k'} \notin \gamma$, either $\mathbf{k} \in \gamma$ and $\mathbf{k'} \in \gamma$. The patch condition is obvious in the former case since $\omega_\mathbf{s}$ then vanishes on both $\mathbf{k}$ and $\mathbf{k'}$. Finally the patch condition in the latter case follows from
\begin{itemize}
\item[1.] the property that if $K$ is an $n$-simplex and $F$ one of its facets, then 
\begin{equation}
\label{propertyBaryCoord}
\forall V \in \sigma_0(F)\,, \quad \lambda_V^{F} = \textup{Tr}_{\abs{K},\abs{F}}\,\lambda_V^K\,,
\end{equation}
\item[2.] the commutation of pullbacks with wedge products and exterior derivative. \qedhere
\end{itemize} 
\end{proof}
\begin{remark}
In particular, if $\mathcal{M}^*_{\Omega \setminus \Gamma}$ is a fractured mesh of $\Omega \setminus \Gamma$, in the notation of \Cref{sec:fracturedMeshes}, then one can deduce from \autoref{patchCond} that
\[\Lambda^0(\mathcal{M}^*_{\Omega \setminus \Gamma}) \simeq \enstq{u \in W^1(\Omega \setminus \Gamma)}{u_{|K} \textup{ is affine for every } K \in \mathcal{M}_\Omega}\,,\]
where the $\simeq$ sign denotes an explicit, natural bijection and $W^1(U)$ is the Sobolev space of square-integrable functions in the open set $U$ with a square-integrable weak derivative, see e.g. \cite[p. 73]{mclean}.
\end{remark}

\subsection{Surjectivity of the trace operator}
\label{sec:traceSurject}

In this section, we investigate the surjectivity of $\textup{Tr}: \Lambda^d(\mathcal{M}^*) \to \Lambda^d(\partial \mathcal{M}^*)$. As mentioned previously, this is a key property needed for the boundary element applications \cite{averseng2022ddm}. 

A triangular mesh $\mathcal{M}$ is {\em edge-connected} if for any $T,T' \in \mathcal{M}$, one can link $T$ and $T'$ by a chain of triangles in $\mathcal{M}$, such that two consecutive triangles have a common edge. A triangular mesh $\mathcal{M}$ has a {\em point contact} if there exists a vertex $S \in \mathcal{M}_\Gamma$ such that $\textup{st}(S,\mathcal{M})$ is not edge-connected. An example of point-contact is shown in \Cref{fig:pointContact}. The main result of this section is the following.
\begin{theorem}[Surjectivity of the trace operator]
\label{thmSurject}
Let $\mathcal{M}_\Omega$ be a regular tetrahedral mesh in $\R^3$, and let $$\mathcal{M}_\Gamma \subset \mathcal{F}(\mathcal{M}_\Omega)\setminus \partial \mathcal{M}_\Omega\,.$$
Let $\mathcal{M}^*_{\Omega \setminus \Gamma}$ be the fractured mesh defined in \Cref{def:fracturedMesh}. Then, for $d \in \{1,2\}$, 
\[\textup{Tr}\left(\Lambda^d(\mathcal{M}^*_{\Omega \setminus \Gamma})\right) = \Lambda^d(\partial \mathcal{M}^*_{\Omega \setminus \Gamma})\,.\]
For $d =0$, the previous equality holds if and only if $\mathcal{M}_\Gamma$ has no point contacts.
\end{theorem}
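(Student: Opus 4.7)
The proof reduces to a combinatorial question about components of stars. For each generalized $d$-subfacet $\mathbf{s} = (S, \gamma) \in \mathbf{S}_d(\partial\mathcal{M}^*_{\Omega \setminus \Gamma})$ I seek $\tilde{\mathbf{s}} = (S, \tilde\gamma) \in \mathbf{S}_d(\mathcal{M}^*_{\Omega \setminus \Gamma})$ such that $\textup{Tr}(\omega_{\tilde{\mathbf{s}}}) = \omega_{\mathbf{s}}$. Unrolling \eqref{whitneyElem} and \eqref{propertyBaryCoord} and invoking \Cref{patchCond}, the trace of $\omega_{\tilde{\mathbf{s}}}$ is non-zero exactly on boundary split facets $(F, K)$ with $S \subset F$ and $K \in \tilde\gamma$, where it agrees with the local Whitney form on $F$. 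Writing a general candidate as a linear combination of Whitney basis forms and equating coefficients of $\lambda_V^F$ at each boundary split facet shows that the contributions of Whitney forms attached to distinct vertices decouple, and the whole question factors into one sub-problem per $S \in \sigma(\mathcal{M}^*)$: whether the natural map $\pi_d : \mathcal{C}(S, \partial \mathcal{M}^*_{\Omega \setminus \Gamma}) \to \mathcal{C}(S, \mathcal{M}^*_{\Omega \setminus \Gamma})$ sending each boundary component to the interior component carrying its underlying volume-elements (well-defined by \Cref{lemChain}) is injective.

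For $d = 2$, each $(F, K) \in \mathbf{F}_b(\mathcal{M}^*)$ has $\{K\}$ as its entire interior star component, so $\pi_2$ is trivially injective and the trace identity is direct. For $d = 1$, I pass to the link of an edge $E$: in a regular tetrahedral mesh this is a $1$-manifold (a circle if $E$ is interior, an arc otherwise), with marked vertices coming from $\mathcal{M}_\Gamma$ and $\partial\Omega$-triangles through $E$. Cutting along the marked vertices produces arcs in bijection with $\mathcal{C}(E, \mathcal{M}^*)$, and by the very construction of the chain of \Cref{lemChain} the two endpoints of each arc lie in a single component of $\mathcal{G}(E, \partial \mathcal{M}^*)$; hence $\pi_1$ is a bijection onto its image.

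The substantive case is $d = 0$. I pass to the link $L$ of a vertex $V$, a triangulated $2$-sphere if $V$ is interior to $\Omega$ and a triangulated $2$-disk if $V \in \partial\Omega$. Triangles of $\mathcal{M}_\Gamma$ through $V$ correspond to a $1$-subcomplex $G \subset L$, and $\partial\Omega$-triangles through $V$ form $\partial L$. The components of $\mathcal{G}(V, \mathcal{M}^*)$ biject with the face-connected components of $L \setminus G$, and a careful reading of \Cref{lemChain} around an edge $E$ through $V$ identifies the components of $\mathcal{G}(V, \partial \mathcal{M}^*)$ with the boundary circles of these pieces (viewed as compact surfaces with boundary). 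Thus $\pi_0$ sends each boundary circle to the piece it bounds, and is injective iff every piece has exactly one boundary circle. Since edge-connectedness of $\textup{st}(V, \mathcal{M}_\Gamma)$ amounts to topological connectedness of $G$ in $L$, and a sphere or disk cut along a connected $1$-subcomplex has only disk complementary regions---each with a single boundary circle---absence of point contacts forces $\pi_0$ injective. Conversely, if $G$ has at least two components at some $V$, an Euler characteristic count produces a piece with at least two boundary circles; realizing the corresponding boundary generalized vertex as a trace would require the single coefficient $c_{V, \tilde\gamma}$ of $\omega_{(V, \tilde\gamma)}$ to take two distinct values simultaneously, which is impossible.

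The main obstacle will be the link-level bookkeeping for $d = 0$: showing that chains of \Cref{lemChain} around interior link-vertices correspond exactly to traversals of single boundary circles of cut pieces of $L$, and correctly handling the interaction between $\partial L$ and $G$ when $V \in \partial\Omega$, so that the ``one piece per boundary circle'' dichotomy really is captured by the point-contact condition on $\mathcal{M}_\Gamma$ alone.
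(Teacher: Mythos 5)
Your proposal is correct and follows essentially the same route as the paper: your injectivity criterion for $\pi_d$ is exactly the paper's local condition that the projected star of each generalized $d$-subfacet be connected through $S$ (\Cref{lemAuxTrace} and \Cref{corTrace}), the cases $d=1,2$ are dispatched as in \Cref{corTraceDNminus2}, and your $d=0$ argument via the plane graph of fracture edges drawn on the link of $V$, with the ``one boundary circle per face iff the graph is connected'' dichotomy, is precisely the appendix proof of \Cref{lemMainSurject}. The differences are cosmetic: you describe face boundaries as boundary circles of cut pieces where the paper uses boundary walks and the Jordan curve theorem, you spell out the $d=1,2$ and converse ($d=0$) steps that the paper merely asserts, and you flag the same unresolved bookkeeping (the interaction of the fracture with $\partial\Omega$ at shared vertices) that the paper dismisses with ``the second case is similar.''
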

\begin{figure}[H]
\centering
\includegraphics[width=0.3\textwidth]{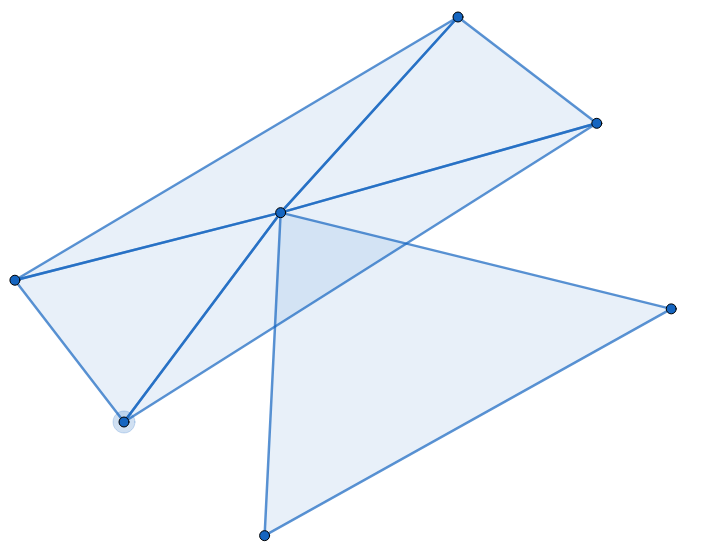}
\caption{Example of a point contact.}
\label{fig:pointContact}
\end{figure}
\begin{remark}
In dimension $2$, i.e. if $\mathcal{M}_\Omega$ is a regular triangular mesh in $\R^2$, then the results below show that the surjectivity holds without any conditions on $\mathcal{M}_\Gamma$.  
\end{remark}

To prove \Cref{thmSurject}, we start by deriving a local criterion. In what follows, $\mathcal{M}^*$ is a $n$-dimensional gen-mesh and $d \in \{0,\ldots,n-1\}$. We consider a generalized facet $\mathbf{s} = (S,\gamma)$ attached to a boundary subsimplex $S \in \sigma_d(\partial \mathcal{M}^*)$. The  {\em projected star} of $\mathbf{s}$ is defined by
\[\textup{st}_\partial\, \mathbf{s} \isdef \enstq{\mathbf{f} = (F,\mathbf{k}) \in {\partial\mathcal{M}^*}}{S \in \sigma(F) \textup{ and } \mathbf{k} \in \gamma}\,.\] 
We say that $\textup{st}_\partial \,\mathbf{s}$ is {\em connected through $S$} if, for any two elements $\mathbf{f},\mathbf{f}' \in \textup{st}_\partial \mathbf{s}$, there exists a chain 
\[\mathbf{f} = \mathbf{f}_1 \overunderset{S}{\partial \mathcal{M}^*}{\longleftrightarrow} \mathbf{f}_2  \overunderset{S}{\partial \mathcal{M}^*}{\longleftrightarrow}  \ldots  \overunderset{S}{\partial \mathcal{M}^*}{\longleftrightarrow} \mathbf{f}_{Q-1}  \overunderset{S}{\partial \mathcal{M}^*}{\longleftrightarrow} \mathbf{f}_{Q} = \mathbf{f}'\,.\]
with $\mathbf{f}_1\,,\ldots\,,\mathbf{f}_Q \in \textup{st}_\partial\, \mathbf{s}$. 
\begin{lemma}[Local criterion]
\label{lemAuxTrace}
Suppose that the projected star of $\mathbf{s}$ is connected through $S$. Then, there exists a generalized $d$-subfacet $\mathbf{t} \in \mathbf{S}_d(\partial \mathcal{M}^*)$ given by 
$$\mathbf{t} = (S,\textup{st}_\partial\, \mathbf{s})\,.$$
Moreover, the Whitney forms $\omega_{\mathbf{s}} \in \Lambda^d(\mathcal{M}^*)$ and $\omega_{\mathbf{t}} \in \Lambda^d(\partial \mathcal{M}^*)$ are related by
\[\omega_{\mathbf{t}} = \pm \textup{Tr}\,\omega_{\mathbf{s}}\,.\]
A positive sign occurs if and only if the orientations of the $d$-subsimplex $S$ in $\mathcal{M}^*$ and $\partial \mathcal{M}^*$ agree.
\end{lemma}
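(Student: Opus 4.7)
The plan is to address the two assertions separately. For the first, the task is to verify that $\mathbf{t} = (S, \textup{st}_\partial\,\mathbf{s})$ really is a generalized $d$-subfacet of $\partial \mathcal{M}^*$, i.e., that $\textup{st}_\partial\,\mathbf{s}$ is a connected component of the graph $\mathcal{G}_{\partial \mathcal{M}^*}(S)$ restricted to $\textup{st}(S, \partial \mathcal{M}^*)$. The hypothesis supplies internal connectivity, so I only need to rule out edges escaping $\textup{st}_\partial\,\mathbf{s}$. If $\mathbf{f} = (F,\mathbf{k}) \in \textup{st}_\partial\,\mathbf{s}$ and $\mathbf{f}' = (F',\mathbf{k}')$ are linked by $\mathcal{N}_b$ in $\partial \mathcal{M}^*$ through $S$, then \Cref{lemChain} produces a chain of elements of $\mathcal{M}^*$ joining $\mathbf{k}$ to $\mathbf{k}'$ with every adjacency through a facet containing $S$, hence through $S$ itself. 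Consequently $\mathbf{k}$ and $\mathbf{k}'$ lie in the same component $\gamma \in \mathcal{C}(S)$, so $\mathbf{f}' \in \textup{st}_\partial\,\mathbf{s}$, closing the argument.

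For the second assertion I would compute $\textup{Tr}\,\omega_\mathbf{s}$ face by face on $\partial \mathcal{M}^*$ and compare with $\omega_\mathbf{t}$. Fix $\mathbf{f} = (F,\mathbf{k}) \in \partial \mathcal{M}^*$. If $\mathbf{k} \notin \gamma$ then $\omega^\mathbf{k}_\mathbf{s} = 0$ by definition. If $\mathbf{k} \in \gamma$ but $S \not\subset F$, some vertex $V_{j_0}$ of $S$ is absent from $F$; the affine function $\lambda^K_{V_{j_0}}$ then vanishes at every vertex of $F$, and hence identically on $\abs{F}$. Each summand of formula~\eqref{whitneyElem} carries either $\lambda^K_{V_{j_0}}$ or $d\lambda^K_{V_{j_0}}$ as a factor, so by commutativity of pullback with the exterior derivative and wedge product the whole trace is zero. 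This matches the vanishing of $\omega_\mathbf{t}$ on boundary elements outside $\textup{st}_\partial\,\mathbf{s}$.

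When $\mathbf{f} \in \textup{st}_\partial\,\mathbf{s}$, I would apply the identity $\textup{Tr}_{\abs{K},\abs{F}}\,\lambda^K_{V} = \lambda^F_{V}$ from Eq.~\eqref{propertyBaryCoord} to every vertex $V$ of $S$, together with naturality of pullback under $d$ and $\wedge$. The result is exactly the Whitney form on $\abs{F}$ built from the ordering $(V_1,\ldots,V_{d+1})$ that fixes the orientation of $S$ inside $\mathcal{M}^*$. Since $\omega^\mathbf{f}_\mathbf{t}$ is the Whitney form on $\abs{F}$ built from whichever ordering orients $S$ inside $\partial \mathcal{M}^*$, the two expressions coincide or differ by a global sign according as the two orientations agree or disagree. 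Pulling this sign out of the tuple (it is the same on every $\mathbf{f} \in \textup{st}_\partial\,\mathbf{s}$) produces the stated relation $\omega_\mathbf{t} = \pm \textup{Tr}\,\omega_\mathbf{s}$.

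The only step that is not purely formal is the auxiliary claim that formula~\eqref{whitneyElem} flips sign under any odd permutation of its vertex list. I would reduce this to a single adjacent transposition $V_i \leftrightarrow V_{i+1}$ and verify it by tracking the three kinds of terms separately: $j = i$, $j = i+1$, and $j \notin \{i,i+1\}$. In each case the reshuffling of $d\lambda$-factors and the swap of labels on the leading $\lambda$-factor conspire to produce a factor of $-1$, so the form changes sign overall. This is the main, though routine, obstacle; the rest of the lemma is a bookkeeping exercise once the structure above is in place.
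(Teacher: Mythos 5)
Your proposal is correct and follows essentially the same route as the paper's proof: establish that $\textup{st}_\partial\,\mathbf{s}$ is precisely a connected component of $\mathcal{G}_{\partial\mathcal{M}^*}(S)$ (internal connectivity from the hypothesis, closure under boundary adjacency from the definition of $\mathcal{N}_b$ via \Cref{lemChain}), then compare $\textup{Tr}\,\omega_{\mathbf{s}}$ with $\omega_{\mathbf{t}}$ element by element using Eq.~\eqref{propertyBaryCoord} and naturality of the pullback, with the same three-way case split. Your extra detail on why \emph{every} summand of \eqref{whitneyElem} dies when a vertex of $S$ is missing from $F$ (the $d\lambda$ factors as well as the leading $\lambda$) is a welcome elaboration of a step the paper states tersely; the sign-flip under odd permutations is already taken as a standard fact in the paper and need not be re-proved.
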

\begin{proof}
Pick an element $\mathbf{f}_0 = (F_0,\mathbf{k}_0) \in \textup{st}_\partial\, \mathbf{s}$, and let $\mathbf{t} = (S,\eta) \in \mathbf{S}_d(\partial \mathcal{M}^*)$ be the unique generalized $d$-subfacet attached to $S$ such that $\mathbf{f}_0 \in \eta$. We claim that 
\begin{equation}
\label{etaEqDs}
\eta = \textup{st}_\partial\, \mathbf{s}\,.
\end{equation}
Indeed, if $\mathbf{f}'$ is another element of $\textup{st}_\partial\, \mathbf{s}$, then by the assumption of the lemma, $\mathbf{f}_0$ and $\mathbf{f}'$ are in the same component of the graph $\mathcal{G}_{\partial \mathcal{M}^*}(S)$, so $\mathbf{f}' \in \eta$. Hence $\textup{st}_\partial\, \mathbf{s} \subset \eta$. 

Conversely, if $\mathbf{f}' = (F',\mathbf{k}') \in \eta$, then $\mathbf{f}'$ is in the same connected component as $\mathbf{f}$ in the graph $\mathcal{G}_{\partial\mathcal{M}^*}(S)$, which by definition of adjacency of the boundary, implies that $\mathbf{k'}$ is in the same component as $\mathbf{k}_0$ in the graph $\mathcal{G}_{\mathcal{M}^*}(S)$. Hence $\mathbf{k'} \in \gamma$, so $\mathbf{f}' \in \textup{st}_\partial\, \mathbf{s}$. Therefore, $\eta \subset\textup{st}_\partial\, \mathbf{s}$, which proves \eqref{etaEqDs}. 

Let $\mu = \textup{Tr}\,\omega_{\mathbf{s}}$. For each $\mathbf{f} = (F,\mathbf{k}) \in \partial \mathcal{M}^*$, we compare the expressions of $\mathcal{\mu}^\mathbf{f}$ and $\omega_{\mathbf{t}}^{\bf f}$. 
On the one hand, if $\mathbf{f} \in \textup{st}_\partial\, \mathbf{s}$, then, writing $S = (V_1\,,\ldots\,,V_{d+1})$ (with an order defining the orientation of $S$ in $\mathcal{M}^*$), and denoting by $K$ the simplex attached to $\mathbf{k}$, 
\[\begin{split}
\mu^\mathbf{f} &= \textup{Tr}_{\abs{K},\abs{F}} \left(\sum_{j = 1}^{d+1} (-1)^{j+1} \lambda_{V_j}^{K} \wedge \textup{d} \lambda_{V_1}^K\ldots \wedge \widehat{\textup{d} \lambda_{V_j}^K} \wedge \ldots \wedge \textup{d} \lambda_{V_{d+1}}^K\right) \\
& = \sum_{j = 1}^{d+1} (-1)^{j+1} \lambda_{V_j}^{F} \wedge \textup{d} \lambda_{V_1}^F\ldots \wedge \widehat{\textup{d} \lambda_{V_j}^F} \wedge \ldots \wedge \textup{d} \lambda_{V_{d+1}}^F
\end{split}\]
again by property \eqref{propertyBaryCoord}. Clearly, this is equal to $\omega_{\mathbf{t}}^{\mathbf{f}}$ on $\abs{F}$, up to a sign (the same sign will occur if the orientation of $S$ in $\mathcal{M}^*$ agrees with the one in $\partial \mathcal{M}^*$). 

On the other hand, if $\mathbf{f} \notin \textup{st}_\partial\, \mathbf{s}$, then $\omega_{\mathbf{t}}^{\mathbf{f}}$ vanishes on $\abs{F}$, so it remains to check that the same holds for $\mu^\mathbf{f}$. By definition of $\textup{st}_\partial\, \mathbf{s}$, either $\mathbf{k} \notin \gamma$, or $S \notin \sigma(F)$. In the former case, we have $\mu^\mathbf{f} = 0$ since $\omega_{\mathbf{s}}^{\mathbf{k}} = 0$. In the latter, there is a vertex $V$ of $S$ not in $F$, so $\lambda_V^K = 0$ identically on $\abs{F}$. This confirms that $\mu^{\mathbf{f}}$ vanishes on $\abs{F}$, and concludes the proof of the lemma. 
\end{proof}
\begin{corollary}
\label{corTrace}
Suppose that for each $d$-subsimplex $S \in \sigma_d(\partial \mathcal{M}^*)$ and for each generalized $d$-subfacet $\mathbf{s}\in \mathbf{S}_d(\mathcal{M}^*)$ attached to $S$, $\textup{st}_\partial\, \mathbf{s}$ is connected through $S$. Then, there holds
\[\textup{Tr}\left(\Lambda^d(\mathcal{M}^*)\right) = \Lambda^d(\partial \mathcal{M}^*)\,.\]
\end{corollary}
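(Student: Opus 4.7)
The plan is to establish both inclusions, with the forward inclusion $\textup{Tr}(\Lambda^d(\mathcal{M}^*)) \subset \Lambda^d(\partial \mathcal{M}^*)$ following by linearity from a case analysis on the generators, and the reverse inclusion being the substantial direction in which every boundary Whitney generator is explicitly realized as a trace. For the forward inclusion, fix $\mathbf{s} = (S,\gamma) \in \mathbf{S}_d(\mathcal{M}^*)$. If $\textup{st}_\partial \mathbf{s}$ is non-empty, then $S$ lies in $\sigma_d(\partial \mathcal{M}^*)$, the standing hypothesis guarantees that $\textup{st}_\partial \mathbf{s}$ is connected through $S$, and Lemma~\ref{lemAuxTrace} directly yields $\textup{Tr}\,\omega_\mathbf{s} = \pm \omega_{\mathbf{t}}$ for some $\mathbf{t} \in \mathbf{S}_d(\partial \mathcal{M}^*)$. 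If $\textup{st}_\partial \mathbf{s}$ is empty, each boundary element $\mathbf{f} = (F,\mathbf{k})$ satisfies either $\mathbf{k}\notin\gamma$ (so $\omega_\mathbf{s}^\mathbf{k} = 0$) or $S \not\subset F$ (so there is $V \in S \setminus F$ with $\lambda_V^K$ vanishing on $\abs{F}$, which forces every term of the pull-back to vanish); thus $\textup{Tr}\,\omega_\mathbf{s} = 0 \in \Lambda^d(\partial \mathcal{M}^*)$ in this case too.

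For the reverse inclusion, the Whitney forms $\{\omega_\mathbf{t}\}_{\mathbf{t}\in\mathbf{S}_d(\partial\mathcal{M}^*)}$ span $\Lambda^d(\partial\mathcal{M}^*)$ by construction, so it suffices to realize each $\omega_\mathbf{t}$ as a trace. Fix $\mathbf{t} = (S, \eta) \in \mathbf{S}_d(\partial \mathcal{M}^*)$, pick any element $\mathbf{f}_0 = (F_0, \mathbf{k}_0) \in \eta$, let $\gamma$ be the connected component of $\mathcal{G}_{\mathcal{M}^*}(S)$ containing $\mathbf{k}_0$, and set $\mathbf{s} \isdef (S,\gamma) \in \mathbf{S}_d(\mathcal{M}^*)$. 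Then $\mathbf{f}_0 \in \textup{st}_\partial \mathbf{s}$, so this set is non-empty and by hypothesis connected through $S$; Lemma~\ref{lemAuxTrace} therefore produces $\tilde{\mathbf{t}} = (S, \textup{st}_\partial \mathbf{s}) \in \mathbf{S}_d(\partial \mathcal{M}^*)$ with $\omega_{\tilde{\mathbf{t}}} = \pm \textup{Tr}\,\omega_\mathbf{s}$. Since $\mathbf{f}_0$ lies in both $\eta$ and $\textup{st}_\partial \mathbf{s}$, and both are connected components of $\mathcal{G}_{\partial\mathcal{M}^*}(S)$ inside $\textup{st}(S, \partial \mathcal{M}^*)$, they coincide, giving $\tilde{\mathbf{t}} = \mathbf{t}$ and hence $\omega_\mathbf{t} = \pm \textup{Tr}\,\omega_\mathbf{s}$.

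The argument is essentially bookkeeping on top of Lemma~\ref{lemAuxTrace}; there is no new geometric content. The only delicate point is that the connectedness hypothesis is genuinely needed for the reverse direction: without it, the projected star $\textup{st}_\partial \mathbf{s}$ could meet several components of $\mathcal{G}_{\partial\mathcal{M}^*}(S)$, so $\textup{Tr}\,\omega_\mathbf{s}$ would collapse to a sum of several distinct $\omega_\mathbf{t}$'s, and it may then be impossible to isolate an individual $\omega_\mathbf{t}$ within the image of $\textup{Tr}$.
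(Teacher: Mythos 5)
Your proof is correct and the substantive direction (realizing each $\omega_{\mathbf{t}}$ as $\pm\,\textup{Tr}\,\omega_{\mathbf{s}}$ by picking $\mathbf{f}_0\in\eta$ and taking the component $\gamma$ of $\mathbf{k}_0$ in $\mathcal{G}_{\mathcal{M}^*}(S)$) is exactly the paper's argument. The extra verification of the forward inclusion is harmless but not needed here, since \Cref{def:trace} already declares $\textup{Tr}\,\omega$ to be an element of $\Lambda^d(\partial\mathcal{M}^*)$.
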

\begin{proof}
Let $\mathbf{t} = (S,\eta) \in \mathbf{S}_d(\partial \mathcal{M}^*)$, pick $\mathbf{f}_0 = (F_0,\mathbf{k}_0) \in \eta$, and let $\mathbf{s} = (S,\gamma) \in \mathbf{S}_d(\mathcal{M}^*)$ be the generalized $d$-subfacet of $\mathcal{M}^*$ defined by the condition that $\mathbf{k}_0 \in \gamma$. We have $\mathbf{f}_0 \in \textup{st}_\partial\, \mathbf{s}$, so, following the beginning of the proof of \Cref{lemAuxTrace}, in fact $\mathbf{t} = (S,\textup{st}_\partial\, \mathbf{s})$. Hence, by \Cref{lemAuxTrace}, there holds
\[\omega_{\mathbf{t}} = \pm \textup{Tr}\, \omega_{\mathbf{s}}\,.\]
Since this holds for any $\mathbf{t} \in \mathbf{S}_d(\partial\mathcal{M}^*)$, the conclusion follows immediately.
\end{proof}
It is not difficult to check that the condition of \Cref{lemAuxTrace} is always satisfied for $d \geq n-2$, therefore:
\begin{corollary}
\label{corTraceDNminus2}
Let $\mathcal{M}^*$ be a $n$-dimensional gen-mesh with $n \geq 2$. Then there holds
\[\textup{Tr}\left(\Lambda^{n-1}(\mathcal{M}^*)\right) = \Lambda^{n-1}(\partial \mathcal{M}^*)\,, \quad \textup{Tr}\left(\Lambda^{n-2}(\mathcal{M}^*)\right) = \Lambda^{n-2}(\partial \mathcal{M}^*)\,.\] 
\end{corollary}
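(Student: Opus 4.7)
My plan is to reduce to the local criterion of \Cref{corTrace}: it is enough to show that, for $d\in\{n-1,n-2\}$ and every generalized $d$-subfacet $\mathbf{s}=(S,\gamma)\in\mathbf{S}_d(\mathcal{M}^*)$ whose projected star $\textup{st}_\partial\,\mathbf{s}$ is non-empty, the set $\textup{st}_\partial\,\mathbf{s}$ is connected through $S$. The combinatorial fact that makes these two cases special is that each element $\mathbf{k}\in\textup{st}(S,\mathcal{M}^*)$ has very few facets containing $S$: exactly one when $d=n-1$ (namely $S$ itself), and exactly two when $d=n-2$ (obtained by adjoining one of the two vertices of $\mathbf{k}$ not in $S$). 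Combined with axiom (i) of \Cref{defGenMesh}, this bounds the degree of every vertex of the graph $\mathcal{G}(S)$ by $n-d$.

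For $d=n-1$, the components of $\mathcal{G}(S)$ are therefore isolated vertices or single edges. In the isolated case $\gamma=\{\mathbf{k}\}$, the split facet $(S,\mathbf{k})$ is necessarily boundary, so $\textup{st}_\partial\,\mathbf{s}=\{(S,\mathbf{k})\}$ is a singleton. In the single-edge case $\gamma=\{\mathbf{k},\mathbf{k}'\}$ with $\mathbf{k}\overset{S}{\longleftrightarrow}\mathbf{k}'$, neither $(S,\mathbf{k})$ nor $(S,\mathbf{k}')$ lies in $\mathbf{F}_b(\mathcal{M}^*)$, so $\textup{st}_\partial\,\mathbf{s}=\emptyset$. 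Both situations are trivially connected.

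For $d=n-2$, every component $\gamma$ of $\mathcal{G}(S)$ is a simple path or a simple cycle. I would count the $S$-containing split facets in $\gamma$ by the identity $2|\gamma|=2m+b$, where $m$ is the number of matched pairs inside $\gamma$ and $b$ is the number of split facets $(F,\mathbf{k})$ with $\mathbf{k}\in\gamma$, $S\subset F$ and $\mathcal{N}(\mathbf{k},F)=\perp$: the cycle case gives $b=0$ and the path case gives $b=2$. Hence $\textup{st}_\partial\,\mathbf{s}$ has either $0$ or exactly $2$ elements. When it has two elements $\{\mathbf{f}_1,\mathbf{f}_2\}$, applying \Cref{lemChain} to $\mathbf{f}_1$ and $S$ produces the unique chain circling $S$; its two ends are by construction the only two boundary split facets of $\gamma$ with realization containing $S$, so they must be $\mathbf{f}_1$ and $\mathbf{f}_2$ themselves, giving $\mathcal{N}_b(\mathbf{f}_1,S)=\mathbf{f}_2$, i.e.\ $\mathbf{f}_1\overunderset{S}{\partial\mathcal{M}^*}{\longleftrightarrow}\mathbf{f}_2$.

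The main obstacle is the $d=n-2$ bookkeeping, because the structure of $\gamma$ admits several slightly degenerate shapes that all need to yield the same count: a length-zero path consisting of a single element whose two $S$-containing facets are both boundary, a length-one path, a multi-edge between two elements adjacent through two distinct $S$-containing facets (which behaves like a cycle), and the generic longer paths and cycles. The degree bound together with the parity identity $b=2|\gamma|-2m$ absorbs all of them uniformly, and \Cref{lemChain} then supplies the required $\partial\mathcal{M}^*$-adjacency. Once every generalized $d$-subfacet $\mathbf{s}$ with $\textup{st}_\partial\,\mathbf{s}\neq\emptyset$ has been shown to satisfy the hypothesis of \Cref{corTrace}, the surjectivity in both degrees follows.
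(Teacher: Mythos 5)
Your proposal is correct and follows exactly the route the paper intends: the paper itself offers no written proof beyond the remark that ``the condition of \Cref{lemAuxTrace} is always satisfied for $d\geq n-2$'', and your argument supplies precisely that verification via \Cref{corTrace}, using the degree bound $n-d$ on $\mathcal{G}(S)$ and \Cref{lemChain} to identify the two boundary split facets of a path component as $\mathcal{N}_b$-neighbors. The case analysis for $d=n-1$ (isolated vertices or single edges) and $d=n-2$ (paths, cycles, and the degenerate multi-edge and singleton shapes, handled uniformly by the count $b=2\lvert\gamma\rvert-2m$) is complete and sound.
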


Let $\mathcal{M}_\Omega$ be a regular tetrahedral mesh in $\R^3$. As in Section \ref{sec:fracturedMeshes}, let $\mathcal{M}_\Gamma \subset \mathcal{F}(\mathcal{M}_\Omega) \setminus \partial \mathcal{M}_\Omega$ and let $\mathcal{M}^*_{\Omega \setminus \Gamma}$ be the fractured mesh defined in \Cref{def:fracturedMesh}. By \Cref{corTraceDNminus2}, $\textup{Tr}:\Lambda^d(\mathcal{M}^*_{\Omega \setminus \Gamma}) \to \Lambda^d(\partial \mathcal{M}^*_{\Omega \setminus \Gamma})$ is surjective for $d = 1,2$. To establish \Cref{thmSurject}, it remains to show the following result. The proof is deferred to \ref{app:proofSurject}.

\begin{lemma}
\label{lemMainSurject}
The following conditions are equivalent:
\begin{itemize}
\item[(i)] For each vertex $S$ of $\mathcal{M}_\Gamma$, $\textup{st}(S,\mathcal{M}_\Gamma)$ is {\em edge-connected}.
\item[(ii)] $\textup{Tr}\left(\Lambda^0(\mathcal{M}^*_{\Omega \setminus \Gamma})\right) = \Lambda^0(\partial \mathcal{M}^*_{\Omega \setminus \Gamma})$.
\end{itemize}
\end{lemma}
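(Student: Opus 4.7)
The plan is to reduce the equivalence to a local condition at each vertex $S$ of $\mathcal{M}_\Gamma$, and then invoke a classical theorem about $1$-complexes embedded in $S^2$.

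In a first step, I upgrade \Cref{corTrace} to an ``if and only if'' locally at each vertex. The argument in the proof of \Cref{lemAuxTrace} already shows that the $\partial\mathcal{M}^*_{\Omega\setminus\Gamma}$-adjacency component through $S$ of any $\mathbf{f}\in\textup{st}_\partial\mathbf{s}$ stays inside $\textup{st}_\partial\mathbf{s}$. Hence $\textup{st}_\partial\mathbf{s}$ decomposes as a disjoint union of projected stars of distinct generalized vertices $\mathbf{t}_1,\mathbf{t}_2,\ldots$ of the boundary. If two or more $\mathbf{t}_i$ appear, any $\omega\in\Lambda^0(\mathcal{M}^*_{\Omega\setminus\Gamma})$ assigns a single value to $\mathbf{s}$, so $\textup{Tr}\,\omega$ must take equal coefficients on $\omega_{\mathbf{t}_1}$ and $\omega_{\mathbf{t}_2}$; in particular $\omega_{\mathbf{t}_1}$ is not a trace. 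Combined with \Cref{corTrace}, this shows that (ii) is equivalent to the local condition: $\textup{st}_\partial \mathbf{s}$ is connected through $S$ for every generalized vertex $\mathbf{s}$ at every vertex $S$.

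In a second step, I translate the local condition into a statement about the link $\mathrm{Lk}(S)$ of $S$ in $\mathcal{M}_\Omega$. Using \Cref{boisLemma} and the regularity of $\mathcal{M}_\Omega$, $\mathrm{Lk}(S)$ is a triangulated $S^2$ (or $D^2$ if $S\in\partial\Omega$); each tetrahedron of $\textup{st}(S,\mathcal{M}_\Omega)$ becomes a triangle, and each facet containing $S$ becomes an edge. Let $L_S\subset\mathrm{Lk}(S)$ be the sub-$1$-complex dual to $\textup{st}(S,\mathcal{M}_\Gamma)$. Unfolding the definitions, I establish: (a) edge-connectedness of $\textup{st}(S,\mathcal{M}_\Gamma)$ is equivalent to connectedness of the $1$-complex $L_S$, because two incident triangles of $\textup{st}(S,\mathcal{M}_\Gamma)$ must share an edge containing $S$, which corresponds to their duals sharing a vertex in $L_S$; (b) generalized vertices of $\mathcal{M}^*_{\Omega\setminus\Gamma}$ attached to $S$ are in bijection with the connected components $U$ of $\mathrm{Lk}(S)\setminus L_S$, via $\gamma\mapsto\{\text{link-triangles of tetrahedra in }\gamma\}$; (c) for $\mathbf{s}$ corresponding to a region $U$, the projected star $\textup{st}_\partial \mathbf{s}$ is identified with the set of edges of $L_S$ lying on $\partial U$, and the $\partial\mathcal{M}^*_{\Omega\setminus\Gamma}$-adjacency through $S$ of two such elements amounts to sharing a vertex of $\partial U$. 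Here the chain of \Cref{lemChain} defining $\mathcal{N}_b$ is unfolded once more: circling around a mesh-edge $E\ni S$ inside $\mathcal{M}^*_{\Omega\setminus\Gamma}$ translates to circling (via link-triangles that stay inside $U$) around the link-vertex dual to $E$. Consequently, $\textup{st}_\partial \mathbf{s}$ is connected through $S$ if and only if the frontier $\partial U$ is a connected subset of $L_S$.

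The reduction is then closed by Whyburn's classical theorem on plane continua: a compact subset of $S^2$ is connected if and only if every connected component of its complement has connected frontier. Applied to $L_S\subset\mathrm{Lk}(S)\cong S^2$, this gives at each $S$ the equivalence between connectedness of $L_S$ and connectedness of every $\partial U$. For the direction $(\text{i})\Rightarrow(\text{ii})$: under (i), $L_S$ is connected at each $S$, so every $\partial U$ is connected, every $\textup{st}_\partial \mathbf{s}$ is connected through its vertex, and (ii) follows from \Cref{corTrace}. For $(\text{ii})\Rightarrow(\text{i})$: if (i) fails at some $S$, then $L_S$ is disconnected, Whyburn produces a region $U$ with disconnected frontier, hence a generalized vertex $\mathbf{s}$ with disconnected projected star, which by Step 1 yields an explicit Whitney $0$-form on $\partial\mathcal{M}^*_{\Omega\setminus\Gamma}$ outside the range of $\textup{Tr}$.

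The principal technical obstacle lies in Step 2, in the careful combinatorial translation of the $\partial\mathcal{M}^*_{\Omega\setminus\Gamma}$-adjacency into the topological adjacency inside $\mathrm{Lk}(S)$. This requires a double unfolding of \Cref{lemChain}: once to relate $\partial\mathcal{M}^*_{\Omega\setminus\Gamma}$-adjacency through $S$ to chains inside $\mathcal{M}^*_{\Omega\setminus\Gamma}$ around some edge $E\ni S$, and once to identify ``circling $E$ in $\mathcal{M}^*$'' with ``circling the link-vertex dual to $E$ in $U$''. A minor adaptation is also needed in the boundary-vertex case $S\in\partial\Omega$, where $\mathrm{Lk}(S)\cong D^2$: one may either enlarge $\partial U$ to include the arcs of $\partial\mathrm{Lk}(S)$ adjacent to $U$, or cap the disk into a sphere by attaching a synthetic triangulated face, after which Whyburn's theorem applies verbatim.
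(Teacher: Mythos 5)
Your overall strategy is the same as the paper's: reduce (ii) to the local criterion of \Cref{corTrace} (with the converse supplied by the observation in \Cref{rem:pointContact}), pass to the link of $S$ in $\mathcal{M}_\Omega$, which is a triangulated sphere, and study the graph $L_S$ drawn on it, identifying generalized vertices at $S$ with faces and projected stars with boundary edges of the corresponding face. The appeal to Whyburn's theorem is only an alternative packaging of the final topological step; the load-bearing combinatorics is identical.

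The genuine gap is in your claim (c). First, $\textup{st}_\partial\,\mathbf{s}$ is not in bijection with the \emph{undirected} edges of $\partial U$: each fracture triangle $F\ni S$ contributes two boundary split facets $(F,K)$ and $(F,K')$, and when the face $U$ lies on both sides of the corresponding link edge (a bridge of $L_S$ inside $\overline{U}$) both of them belong to $\textup{st}_\partial\,\mathbf{s}$; the correct identification is with \emph{directed} edges having $U$ on a prescribed side, as in the paper. Second, and more seriously, the adjacency through $S$ in $\partial\mathcal{M}^*_{\Omega\setminus\Gamma}$ is \emph{not} ``the underlying edges share a vertex of $\partial U$'': unfolding \Cref{lemChain}, $\mathcal{N}_b(\cdot\,,Sv)$ sends a directed edge ending at the link vertex $v$ to the \emph{next} edge in the rotation at $v$ within the corner of $U$, i.e.\ to its successor in the boundary walk of $U$. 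This relation is strictly finer than ``share a vertex'' (at a vertex where $U$ has several corners, two edges sharing $v$ need not be matched by $\mathcal{N}_b$), so your ``connected through $S$ iff $\partial U$ connected'' does not follow from (c) as stated. To recover it you must prove that each connected component of the frontier of a face is traversed by a single closed boundary walk, and that a bridge edge is traversed twice, once in each direction --- which is exactly the Jordan-curve/boundary-walk argument that constitutes the crux of the paper's proof. As written, your proposal assumes away this main technical point; once it is supplied, your argument closes, and Whyburn's theorem merely replaces the elementary fact that connectedness of $L_S$ forces every face frontier to be connected.
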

\begin{remark}
\label{rem:pointContact}
The issue, when there is a point contact at $S$, is that the star of $S$ has several distinct edge-connected components. The Whitney $0$-forms on the boundary can be defined independently on each of those components, while the Whitney forms in the volume must satisfy a compatibility condition at $S$. This is why the surjectivity fails in this case.
\end{remark}

\subsection{Finite element assembly}

Given an $n$-dimensional generalized mesh $\mathcal{M}^*$ and $0 \leq d \leq n$, we discuss the computation of the Galerkin matrix associated to a bilinear form
$$a: \Lambda^d(\mathcal{M}^*) \times \Lambda^d(\mathcal{M}^*)\to \R\,.$$
We assume that $a$ has the form
\[a(\omega,\omega') = \sum_{\mathbf{k} \in \mathcal{M}^*} a_{\mathbf{k}}(\omega_{\mathbf{k}},\omega'_{\mathbf{k}})\,,\]
where for each $\mathbf{k} \in {\mathcal{M}^*}$, $a_{\mathbf{k}}$ is a bilinear form on the space of $d$-differential forms on $\abs{\mathbf{k}}$.  
The Galerkin matrix $\mathbf{A}$ of $a$ is defined by
\[\mathbf{A}_{\mathbf{s},\mathbf{t}} = a(\omega_\mathbf{s},\omega_{\mathbf{t}})\,,\quad \mathbf{s},\mathbf{t} \in \mathbf{S}_d(\mathcal{M}^*)\,,\]
where $\{\omega_{\mathbf{s}}\}_{\mathbf{s} \in \mathbf{S}_q(\mathcal{M}^*)}$ is the Whitney basis functions defined in the previous section. We use indexing by generalized $d$-subfacets to avoid the heavier notation that would result from introducing orderings.

As in standard finite element computations, one first needs a method to compute so-called {\em local matrices}, involving the local Whitney forms
defined on each element. For each $d$-subsimplex $S$ of an element $\mathbf{k} \in \mathcal{M}^*$, there is a unique generalized $d$-subfacet $\mathbf{s}$ of the form
\[\mathbf{s} = (S,\gamma) \textup{ with } \mathbf{k} \in \gamma\,.\]
We write $\mathbf{s} =:J(\mathbf{k},S)$. In the implementation, $J$ corresponds to a ``local-to-global" index mapping. 
One can then form the local $\binom{n}{d}\times \binom{n}{d}$ matrices $A_{loc}(\mathbf{k})$, defined by
\[\left[A_{loc}(\mathbf{k})\right]_{S,S'} \isdef a(\omega_{\mathbf{s}}^{\mathbf{k}},\omega_{\mathbf{s}'}^{\mathbf{k}})\quad S,S' \in \sigma_d(\mathbf{k})\,, \mathbf{s} = J(\mathbf{k},S)\,, \mathbf{s}' = J(\mathbf{k},S')\,.\]

To assemble the global matrix $\mathbf{A}$ from the local matrices $A_{loc}$, one can then use \Cref{algo:assembly}. In the context of boundary elements, the bilinear form $a$ rather takes the form
\[a(\omega,\omega') = \sum_{\mathbf{k} \in \mathbf{K}_{\mathcal{M}^*}}\sum_{\mathbf{k}'\in \mathbf{K}_{\mathcal{M}^*}} a_{\mathbf{k},\mathbf{k}'}(\omega_{\mathbf{k}},\omega_{\mathbf{k}'}')\,.\]
This case can be tackled very similarly, using two nested loops over elements, instead of just one, in \Cref{algo:assembly}. 

\begin{algorithm}
\caption{\texttt{Assembly}$(\mathcal{M}^*,d,A_{loc})$}
\label{algo:assembly}
\mbox{\bf INPUTS:} Generalized mesh $\mathcal{M}^*$, subsimplex dimension $d$, local matrices $A_{loc}$\\
\mbox{\bf RETURNS:} The (sparse) matrix $\mathbf{A}$. \\

$N_d \GETS \#{\mathbf{S}_d(\mathcal{M}^*)} \quad {\color{green!50!black}\texttt{\% Number of generalized $d$-subfacets}}\\
\mathbf{A} \GETS {\tt zeros}(N_d, N_d) \quad {\color{green!50!black}\texttt{\% Initialize matrix}}\\
\\
\mbox{\bf FOR } \mathbf{k} \in \mathbf{K}_{\mathcal{M}^*} \\
\begin{array}{rl}
& A \GETS A_{loc}(\mathbf{k}) \quad {\color{green!50!black}\texttt{\% Compute local matrix}}\\[1ex]
&\mbox{\bf FOR } S,S' \in \sigma_d(\mathbf{k}) \\
&\begin{array}{rl}
	& \mathbf{s} \GETS J(\mathbf{k},S)  \quad {\color{green!50!black}\texttt{\% find indices in global matrix}}\\
	& \mathbf{t} \GETS J(\mathbf{k},S') \\
	& \mathbf{A}_{\mathbf{s},\mathbf{t}} \GETS 	\mathbf{A}_{\mathbf{s},\mathbf{t}} + A(S,S')  \quad {\color{green!50!black}\texttt{\% add local contribution to the global matrix}}	
\end{array}\\
&\mbox{\bf END FOR}\\[1ex]
\end{array}\\
\mbox{\bf END FOR}$\\[1ex]
\mbox{\bf RETURN } $\mathbf{A}$
\end{algorithm}

\section{Model application: Laplace eigenvalue problem in a disk with cut radius}
\label{sec:numerics}
To conclude this paper, we present an application of generalized meshes to the resolution of the Laplace equation in a disk with cut radius, i.e. the domain
\[\Omega = B(0,1) \setminus \left([0,1)\times \{0\}\right) \subset \R^2\,.\]
Generalized meshes are perfectly suited to represent such a geometry, see e.g. \Cref{fig:diskCutRad} below. To create meshes like this one, we have implemented a function
\[\texttt{ $\mathcal{M}^*_{\Omega \setminus \Gamma}$ = fracturedMesh($\mathcal{M}_\Omega$,$\mathcal{M}_\Gamma$)}\]
which takes as an input a regular $n$-dimensional mesh $\mathcal{M}_\Omega$, a $(n-1)$-dimensional mesh $\mathcal{M}_\Gamma$ such that $\mathcal{M}_\Gamma \subset \mathcal{F}(\mathcal{M}_\Gamma)$, and returns the fractured mesh $\mathcal{M}^*_{\Omega \setminus \Gamma}$ as defined in Section \ref{sec:fracturedMeshes}. Then, the Galerkin matrices in the basis $\{\omega_{\mathbf{v}}\}_{\mathbf{v} \in \mathbf{S}_0(\mathcal{M}^*_{\Omega \setminus \Gamma})}$  of the operators needed, (i.e. the mass matrix, for the identity operator, and the stiffness matrix, for the Laplace operator) can be assembled by using \Cref{algo:assembly}.\footnote{For our Matlab implementation, we have rather adopted a global assembly algorithm as in \cite{gypsilab}, in which the nested loops can be avoided to increase the performance.}

If initially, the mesh $\mathcal{M}_\Omega$ does not resolve the fracture, in the sense that the condition ${\mathcal{M}_\Gamma \subset \mathcal{F}(\mathcal{M}_\Gamma)}$ is not fulfilled, then one may remesh the domain $\mathcal{M}_\Omega$ using constrained meshing algorithms, see e.g. \cite{berge2019voronoi}.

\begin{figure}
\centering
\includegraphics[width=0.5\textwidth]{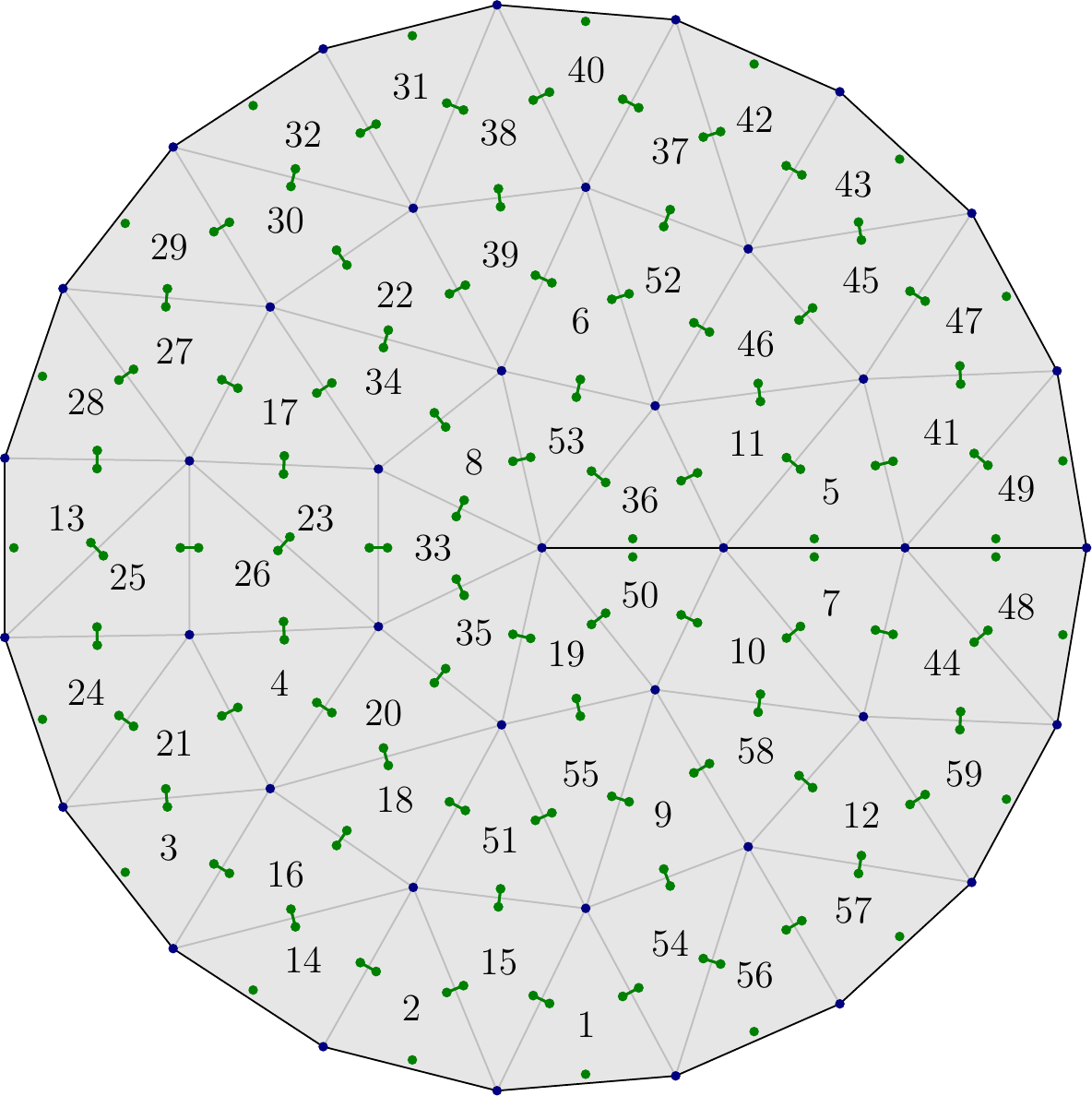}
\caption{Generalized mesh representing a disk with cut radius. The adjacency graph is represented in green, with the same conventions as in Figures \ref{crackDomainExample} and \ref{fig:duplicatedVert}}
\label{fig:diskCutRad}
\end{figure}

We consider the following eigenvalue problem:
\begin{equation}
\left\{\begin{array}{rlcll}
\Delta u &=& \lambda u &&\textup{in } \Omega\,,\\
\partial_n u &=& 0 && \textup{on } \partial \Omega\,,
\end{array}\right.
\end{equation}
whose solutions can be found analytically by separation of variables; they take the form
\[u_{n,p}(r,\theta) = f_p(r)g_n(\theta)\]
with
\begin{equation}
\label{gntheta}
g_n(\theta) = \cos( {n\theta/2})\,, \quad n \in \N\,,
\end{equation}
\begin{equation}
\label{fpr}
f_p(r) = J_{\frac{n}{2}}(\rho_{n,p}\,r)\,, \quad p \in \N\,,
\end{equation}
where $\rho_{n,p}$ is the $p$-th zero of $J_{n/2}'$, and $J_\alpha$ is the Bessel function of the first kind and order $\alpha$. The associated eigenvalue is $\lambda_{n,p} = \rho_{n,p}^2$. 

To test our implementation, we compute numerical approximations of those eigenvalues using a variational formulation and the finite element method on a generalized mesh like the one of \Cref{fig:diskCutRad}. We do not dwell on the details of the method, but refer to our Matlab implementation \cite{matlabCode}. The numerical values of the first $\lambda_{n,p}$ that we obtain for various mesh sizes are compared against high-precision reference values derived from eqs.~(\ref{gntheta}-\ref{fpr}). The numerical values indeed approach the reference values, see \Cref{table}. The fact that the first non-constant eigenfunction,
\[u_{1,1}(r,\theta) = J_{1/2}(\sqrt{\lambda_{1,1}}r) \cos(\theta/2)\] 
behaves like $O(\sqrt{r})$ near the origin, explains the slow rate of convergence for the corresponding eigenvalue, and is a manifestation of a well-known feature in the field of fracture physics (see e.g. \cite[Chap.3]{kuna}), the so-called ``crack-tip singularity". 
\begin{table}[H]
\centering
\begin{tabular}{c |c c c c}
  &$h \approx 0.7$ & $h \approx 0.35$ & $h\approx 0.18$ & $h\approx 0.09$  \\[0.5em]$\abs{\lambda_{1,h} - \lambda_{1}}/\lambda_{1}$& 0.15152& 0.062438& 0.028606& 0.014625\\
$\abs{\lambda_{2,h} - \lambda_{2}}/\lambda_{2}$& 0.046833& 0.01163& 0.0028283& 0.00083173\\
$\abs{\lambda_{3,h} - \lambda_{3}}/\lambda_{3}$& 0.054236& 0.015154& 0.0039806& 0.0011769\\
$\abs{\lambda_{4,h} - \lambda_{4}}/\lambda_{4}$& 0.076291& 0.021513& 0.0057585& 0.0016574\\
$\abs{\lambda_{5,h} - \lambda_{5}}/\lambda_{5}$& 0.094939& 0.028421& 0.007837& 0.002246
\end{tabular}
\caption{Numerical approximations of the the first eigenvalues of the Neumann eigenvalue problem on the disk with cut radius as computed by the finite element method, compared to theoretical value. We report the relative error $\abs{\lambda_{i,h} - \lambda_i}/\lambda_i$ where $\lambda_{i,h}$ is the numerical approximation returned by the finite element method and $\lambda_i$ is the corresponding reference value. The parameter $h$ is the diameter of the largest element in the mesh. The first eigenvalue, $\lambda_0 = 0$, is ignored. }
\label{table}
\end{table}

One can of course tackle more complex geometries and PDEs, use vectorial elements and work in three dimensions. Here we have restricted our attention to the simplest model problem for the sake of clarity.  

\bgroup \footnotesize
\bibliographystyle{plain}
\bibliography{bilbioGenMesh.bib}

\newcommand{\noop}[1]{}
\begin{thebibliography}{10}

\bibitem{bookFracPorMed}
P.~M. Adler, J.-F. Thovert, and V.~V. Mourzenko.
\newblock {\em Fractured porous media}.
\newblock Oxford University Press, Oxford, 2013.

\bibitem{ahmed2017reduced}
E.~Ahmed, J.~Jaffré, and J.~E. Roberts.
\newblock A reduced fracture model for two-phase flow with different rock
  types.
\newblock {\em Mathematics and Computers in Simulation}, 137:49--70, 2017.

\bibitem{modelingFracASInterf}
C.~Alboin, J.~Jaffr\'{e}, J.~E. Roberts, and C.~Serres.
\newblock Modeling fractures as interfaces for flow and transport in porous
  media.
\newblock In {\em Fluid flow and transport in porous media: mathematical and
  numerical treatment ({S}outh {H}adley, {MA}, 2001)}, volume 295 of {\em
  Contemp. Math.}, pages 13--24.

\bibitem{gypsilab}
F.~Alouges and M.~Aussal.
\newblock F{EM} and {BEM} simulations with the {G}ypsilab framework.
\newblock {\em SMAI J. Comput. Math.}, 4:297--318, 2018.

\bibitem{FEEC}
D.~N. Arnold, R.~S. Falk, and R.~Winther.
\newblock Finite element exterior calculus, homological techniques, and
  applications.
\newblock {\em Acta Numer.}, 15:1--155, 2006.

\bibitem{matlabCode}
M.~Averseng.
\newblock Fracked meshes library.
\newblock \url{https://github.com/MartinAverseng/FracMeshLib}.

\bibitem{averseng2022ddm}
M.~Averseng, X.~Claeys, and R.~Hiptmair.
\newblock A domain decomposition method for the resolution of the hypersingular
  integral equation on a multi-screen.
\newblock {\em In preparation}, 2022.

\bibitem{bannister2022acoustic}
J.~Bannister, A.~Gibbs, and D.~P. Hewett.
\newblock Acoustic scattering by impedance screens/cracks with fractal
  boundary: {W}ell-posedness analysis and boundary element approximation.
\newblock {\em Math. Models Methods Appl. Sci.}, 32(2):291--319, 2022.

\bibitem{berge2019voronoi}
R.~L. Berge, \O.~S. Klemetsdal, and K.-A. Lie.
\newblock Unstructured {V}oronoi grids conforming to lower dimensional objects.
\newblock {\em Comput. Geosci.}, 23(1):169--188, 2019.

\bibitem{benchmark}
I.~Berre, W.~M. Boon, B.~Flemisch, A.~Fumagalli, D.~Gl{\"a}ser, E.~Keilegavlen,
  A.~Scotti, I.~Stefansson, A.~Tatomir, K.~Brenner, et~al.
\newblock Verification benchmarks for single-phase flow in three-dimensional
  fractured porous media.
\newblock {\em Advances in Water Resources}, 147:103759, 2021.

\bibitem{flowFracPor}
I.~Berre, F.~Doster, and E.~Keilegavlen.
\newblock Flow in fractured porous media: a review of conceptual models and
  discretization approaches.
\newblock {\em Transp. Porous Media}, 130(1):215--236, 2019.

\bibitem{boissonnat}
J.-D. Boissonnat and M.~Yvinec.
\newblock {\em Algorithmic geometry}.
\newblock Cambridge University Press, 1998.

\bibitem{bondy1976graph}
J.~A. Bondy and U.~S. Murty.
\newblock {\em Graph theory with applications}, volume 290.
\newblock Macmillan London, 1976.

\bibitem{boon2021functional}
W.~M. Boon, J.~M. Nordbotten, and J.~E. Vatne.
\newblock Functional analysis and exterior calculus on mixed-dimensional
  geometries.
\newblock {\em Annali di Matematica Pura ed Applicata}, 200(2):757--789, 2021.

\bibitem{bryant}
J.~L. Bryant.
\newblock Piecewise linear topology.
\newblock In {\em Handbook of geometric topology}, pages 219--259.
  North-Holland, Amsterdam, 2002.

\bibitem{buffaChris}
A.~Buffa and S.~H. Christiansen.
\newblock The electric field integral equation on {L}ipschitz screens:
  definitions and numerical approximation.
\newblock {\em Numer. Math.}, 94(2):229--267, 2003.

\bibitem{celes2005compact}
W.~Celes, G.~H. Paulino, and R.~Espinha.
\newblock A compact adjacency-based topological data structure for finite
  element mesh representation.
\newblock {\em International journal for numerical methods in engineering},
  64(11):1529--1556, 2005.

\bibitem{chandler2021bem}
S.~N. Chandler-Wilde, D.~P. Hewett, A.~Moiola, and J.~Besson.
\newblock Boundary element methods for acoustic scattering by fractal screens.
\newblock {\em Numer. Math.}, 147(4):785--837, 2021.

\bibitem{choi1989vertex}
Y.~Choi.
\newblock {\em Vertex-based boundary representation of nonmanifold geometric
  models}.
\newblock Carnegie Mellon University, 1989.

\bibitem{ciarlet}
P.~G. Ciarlet.
\newblock {\em The finite element method for elliptic problems}, volume~40 of
  {\em Classics in Applied Mathematics}.
\newblock Society for Industrial and Applied Mathematics (SIAM), Philadelphia,
  PA, 2002.

\bibitem{quotientBem}
X.~Claeys, L.~Giacomel, R.~Hiptmair, and C.~Urz\'{u}a-Torres.
\newblock Quotient-space boundary element methods for scattering at complex
  screens.
\newblock {\em BIT}, 61, 2021.

\bibitem{multiscreens}
X.~Claeys and R.~Hiptmair.
\newblock Integral equations on multi-screens.
\newblock {\em Integral Equations Operator Theory}, 77(2):167--197, 2013.

\bibitem{cools2022preconditioners}
K.~Cools and C.~Urz{\'u}a-Torres.
\newblock Preconditioners for multi-screen scattering.
\newblock In {\em 2022 International Conference on Electromagnetics in Advanced
  Applications (ICEAA)}, pages 172--173. IEEE, 2022.

\bibitem{de2005data}
L.~De~Floriani and A.~Hui.
\newblock Data structures for simplicial complexes: An analysis and a
  comparison.
\newblock In {\em Symposium on Geometry Processing}, pages 119--128, 2005.

\bibitem{gmsh}
C.~Geuzaine and J.-F. Remacle.
\newblock Gmsh: {A} 3-{D} finite element mesh generator with built-in pre- and
  post-processing facilities.
\newblock {\em Internat. J. Numer. Methods Engrg.}, 79(11):1309--1331, 2009.

\bibitem{hatcher2002algebraic}
A.~Hatcher.
\newblock {\em Algebraic topology}.
\newblock Cambridge University Press, Cambridge, 2002.

\bibitem{precEFIE}
R.~Hiptmair and C.~Urz\'{u}a-Torres.
\newblock Preconditioning the {EFIE} on screens.
\newblock {\em Math. Models Methods Appl. Sci.}, 30(9):1705--1726, 2020.

\bibitem{hudson}
J.~F.~P. Hudson.
\newblock {\em Piecewise linear topology}.
\newblock W. A. Benjamin, Inc., New York-Amsterdam, 1969.

\bibitem{porepy}
E.~Keilegavlen, Runar B., A.~Fumagalli, M.~Starnoni, I.~Stefansson, J.~Varela,
  and I.~Berre.
\newblock Pore{P}y: an open-source software for simulation of multiphysics
  processes in fractured porous media.
\newblock {\em Comput. Geosci.}, 25(1):243--265, 2021.

\bibitem{kuna}
M.~Kuna.
\newblock {\em Finite elements in fracture mechanics}, volume 201 of {\em Solid
  Mechanics and its Applications}.
\newblock Springer, Dordrecht, second edition edition, 2013.
\newblock Theory---numerics---applications.

\bibitem{lee2011introduction}
J.~M. Lee.
\newblock {\em Introduction to topological manifolds}, volume 202 of {\em
  Graduate Texts in Mathematics}.
\newblock Springer, New York, second edition, 2011.

\bibitem{lee2001partial}
S.~H. Lee and K.~Lee.
\newblock Partial entity structure: a compact boundary representation for
  non-manifold geometric modeling.
\newblock {\em J. Comput. Inf. Sci. Eng.}, 1(4):356--365, 2001.

\bibitem{modelingFracAndBarr}
V.~Martin, J.~Jaffr\'{e}, and J.~E. Roberts.
\newblock Modeling fractures and barriers as interfaces for flow in porous
  media.
\newblock {\em SIAM J. Sci. Comput.}, 26(5):1667--1691, 2005.

\bibitem{mclean}
W.~McLean.
\newblock {\em Strongly elliptic systems and boundary integral equations}.
\newblock Cambridge University Press, Cambridge, 2000.

\bibitem{moise}
E.~Moise.
\newblock {\em Geometric topology in dimensions {$2$} and {$3$}}.
\newblock Graduate Texts in Mathematics, Vol. 47. Springer-Verlag, New
  York-Heidelberg, 1977.

\bibitem{rossignac1989sgc}
J.~R. Rossignac and M.~A. O'Connor.
\newblock {\em SGC: A dimension-independent model for pointsets with internal
  structures and incomplete boundaries}.
\newblock IBM TJ Watson Research Center Yorktown Heights, 1989.

\bibitem{weiler1986topological}
K.~J. Weiler.
\newblock {\em Topological structures for geometric modeling}.
\newblock Rensselaer Polytechnic Institute, 1986.

\bibitem{whitney}
H.~Whitney.
\newblock {\em Geometric integration theory}.
\newblock Princeton University Press, Princeton, N. J., 1957.

\bibitem{yla2005surface}
P.~Yl{\"a}-Oijala, M.~Taskinen, and J.~Sarvas.
\newblock Surface integral equation method for general composite metallic and
  dielectric structures with junctions.
\newblock {\em Progress In Electromagnetics Research}, 52:81--108, 2005.

\end{thebibliography}
\egroup

\appendix

\section{Proof of \Cref{lemMainSurject}}
\label{app:proofSurject}

Following \Cref{rem:pointContact}, the implication (i) $\Rightarrow$ (ii) is not difficult, so we only prove (ii) $\Rightarrow$ (i). Let $S$ be a vertex of $\sigma_0(\partial\mathcal{M}^*_{\Omega \setminus \Gamma})$. One has either $S \in \sigma_0(\mathcal{M}_\Gamma)$ or $S \in \sigma_0(\partial \mathcal{M}_\Omega)$. We restrict our attention to the first case, but the second case is similar. We shall prove that for each generalized vertex $\mathbf{s} \in \mathbf{S}_0(\mathcal{M}^*)$ attached to $S$, the projected star of $\mathbf{s}$ is connected (in the sense defined above \Cref{lemAuxTrace}). The conclusion then follows from \Cref{corTrace}.

The key idea of the proof is to consider the faces of a {\em plane graph} $G$ (for the notion of plane graph, refer to \cite[Chap. 9]{bondy1976graph}) drawn on the {\em link} of $S$. More precisely, define
$$\mathscr{S} = \textup{lk}(S,\mathcal{M}_\Omega) \isdef \partial( \textup{st}(S,\mathcal{M}_\Omega))\,,$$ 
Since $\mathcal{M}_\Omega$ is regular and $S \notin \partial \mathcal{M}_\Omega$,  $\abs{\mathscr{S}}$ is homeomorphic to a sphere \cite[Corollary 1.16]{hudson}. Some edges of $\mathscr{S}$ are incident to a triangle of $\mathcal{M}_\Gamma$: those edges define a graph $G$ drawn on $\abs{\mathscr{S}}$, as represented in \Cref{fig:maze}. Crucially, this graph is {\em connected} since $\textup{st}(S,\mathcal{M}_\Gamma)$ is  {\em edge-connected}. 

One can define a one-to-one correspondence between $\textup{st}(S,\partial \mathcal{M}^*_{\Omega \setminus \Gamma})$ -- the generalized star of $S$ defined in eq.~\eqref{eq:defGenStar} -- and the set of {\em directed edges} of $G$. For this, we consider an element $\mathbf{f} \in \textup{st}(S,\partial \mathcal{M}^*_{\Omega \setminus \Gamma})$. Let $K$ be the tetrahedron attached to $\mathbf{k}$, $T = K\setminus S$ and $E = T \cap F$. Then $E$ is an edge of $G$, and the directed edge $e$ corresponding to $\mathbf{f}$ is defined as the direction of $E$ such that $T$ lies on the right side of $e$. We will write $e = \psi(\mathbf{f})$. 

\begin{figure}[H]
\centering
\includegraphics[width=0.6\textwidth]{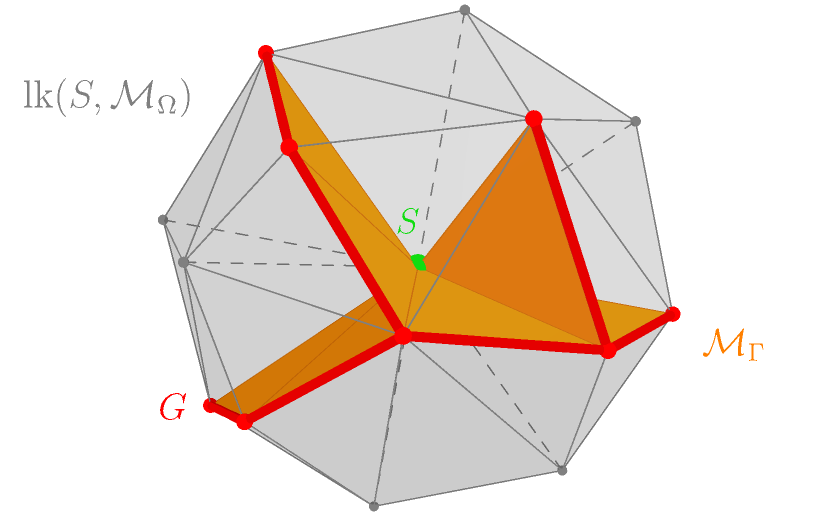}
\caption{Illustration of the idea of the proof of \Cref{thmSurject}. The link of the vertex $S$ is represented in gray, the mesh $\mathcal{M}_\Gamma$ in orange and the graph $G$ drawn on $\textup{lk}(S,\mathcal{M}_\Omega)$ in red.}
\label{fig:maze}
\end{figure}

To each generalized vertex $\mathbf{s} = (S,\gamma)\in \mathbf{S}_0(\mathcal{M}_{\Omega \setminus \Gamma}^*)$ attached to $S$, corresponds a unique {\em face} $f$ of $G$.\footnote{A face of $G$ is a connected component of $\abs{\mathscr{S}}\setminus G$, where $G$ is considered as the union of its points and edges.}. Namely, if $\mathbf{k} \in \gamma$, $K$ is the simplex attached to $K$, and $F$ is the facet of $K$ not containing $S$, then $f$ is the face of $G$ containing the interior of $\abs{F}$. If $\mathbf{f} \in \textup{st}_\partial\, \mathbf{s}$, then the edge $e = \psi(\mathbf{f})$ is on the boundary of $f$, and the face of $G$ to the right of $e$ is $f$. From now on, we fix a generalized vertex $\mathbf{s}$ attached to $S$ and let $f$ be the corresponding face of $G$. 

Starting from $\mathbf{f}_0 = (F_0,\mathbf{k}_0) \in \textup{st}_\partial\,\mathbf{s}$, we define a periodic chain of elements of $\textup{st}_\partial\,\mathbf{s}$ as follows. Let $e_0 = \psi(\mathbf{f}_0)$, and write $e_0 = (V_0,V_1)$. Next, let $\mathbf{f}_1 = \mathcal{N}_{\partial\mathcal{M}^*_{\Omega \setminus \Gamma}}(\mathbf{f}_0,SV_1)$ be the neighbor of $\mathbf{f}_0$ through the edge $SV_1$. Similarly, let $e_1 = \psi(\mathbf{f}_1)$, write $e_1 = (V_1,V_2)$ and put $\mathbf{f}_2 = \mathcal{N}_{\partial\mathcal{M}^*_{\Omega \setminus \Gamma}}(\mathbf{f}_1,SV_2)$. Using the fact that $ \partial \mathcal{M}^*_{\Omega \setminus \Gamma}$ has no boundary, this process can be repeated indefinitely and generates a periodic sequence $(\mathbf{f}_n)_{n \in \N}$ of elements of $\textup{st}_{\partial}\,\mathbf{s}$. 

Clearly, it suffices to show that every element of $\textup{st}_\partial\,\mathbf{s}$ is visited by this sequence. Hence, we pick an arbitrary element $\mathbf{f}^* = (F^*,\mathbf{k}^*) \in \textup{st}_\partial \,\mathbf{s}$, and let $e^*$ be the directed edge corresponding to $\mathbf{f}^*$, i.e. $e^* = \psi(\mathbf{f})$. 

The edges $e_n$ are in the boundary of $f$, so they define a periodic {\em boundary walk} of $f$. By the Jordan curve theorem, it holds that $e^*$ or $\overline{e^*}$ (the same edge as $e^*$ but with the opposite direction) must be visited by this walk.\footnote{Proving this rigorously can be tedious, but this is a well-known fact in planar graph theory, see e.g. \cite[p.140]{bondy1976graph}.}

We claim that in fact, $e^*$ is visited. Indeed, if $\overline{e^*}$ is visited, then there exist $\mathbf{f},\mathbf{f'} \in \textup{st}_\partial \,\mathbf{s}$ such that $\psi(\mathbf{f}) = e^*$, $\psi(\mathbf{f}') = \overline{e^*}$. Denoting the common undirected edge by $E$, we deduce that the face $f$ lies on both sides of $E$. This means that $E$ is an acyclic edge of $G$, and in this case, the boundary walk visits $E$ exactly twice, once in each direction. Therefore, $e^*$ is also visited. 

This proves that $\mathbf{f}^* = \mathbf{f}_n$ for some $n \in \N$, which concludes the proof.  \qed

\paragraph{Acknowledgement}
MA gratefully acknowledges that the key idea in the above proof, namely, to introduce the graph $G$ shown in \Cref{fig:maze}, was found by Maxence Novel. He also thanks Nikolas Stott for their help throughout the redaction of this article.

\end{document}